\newtheorem{mainthm}{Theorem}
\newtheorem{theorem}{Theorem}[section]
\newtheorem*{theorem*}{Theorem}
\newtheorem{lemma}[theorem]{Lemma}
\newtheorem{proposition}[theorem]{Proposition}
\newtheorem*{proposition*}{Proposition}
\newtheorem*{conjecture*}{Conjecture}
\theoremstyle{definition}
\newtheorem{definition}[theorem]{Definition}
\newtheorem{remark}[theorem]{Remark}
\numberwithin{equation}{section}
\def\bR {\mathbb{R}}
\def\cE {\mathcal{E}}
\def\scrL{\mathscr{L}}
\def\la {\langle}
\def\ra {\rangle}
\newcommand{\tx}[1]{\mathrm{#1}}
\newcommand{\wto}{\rightharpoonup}
\newcommand{\wt}[1]{\widetilde{#1}}
\newcommand{\bs}[1]{\boldsymbol{#1}}
\newcommand{\sh}[1]{#1^\sharp}
\newcommand{\nt}[1]{(#1^\sharp)^\perp}
\newcommand{\spn}{\operatorname{span}}
\renewcommand{\ker}{\operatorname{ker}}
\newcommand{\eee}{\mathrm e}
\newcommand{\ud}{\mathrm{\,d}}
\newcommand{\vd}{\mathrm{d}}
\newcommand{\vD}{\mathrm{D}}
\newcommand{\dd}[1]{{\frac{\vd}{\vd{#1}}}}
\definecolor{deepgreen}{cmyk}{1,0,1,0.5}
\newcommand{\E}{\mathcal{E}}
\newcommand{\R}{\mathbb{R}}
\newcommand{\Sp}{\mathbb{S}}
\newcommand{\p}{\partial}
\newcommand{\Rmnum}[1]{\expandafter\@slowromancap\romannumeral #1@}
\newcommand{\ang}[1]{\left\langle{#1}\right\rangle}
\newcommand{\abs}[1]{\left\lvert{#1}\right\rvert}
\newcommand{\EQ}[1]{\begin{equation}\begin{split} #1 \end{split}\end{equation}}
\newcommand{\Del}[1]{}
\numberwithin{equation}{section}
\newcommand{\mand}{{\ \ \text{and} \ \  }}
\definecolor{green}{rgb}{0,0.8,0} 
\newcommand{\eps}{\epsilon}
\title[Dynamics kink-antikink pairs] {Dynamics of strongly interacting kink-antikink pairs \\
for scalar fields on a line}
\author{Jacek Jendrej}
\address{CNRS \& LAGA (UMR 7539 CNRS), Universit\'e Paris 13 -- Sorbonne Paris Cit\'e, Universit\'e Paris~8,
99 avenue Jean-Baptiste Cl\'ement, 93430 Villetaneuse, France}
\email{jendrej@math.univ-paris13.fr}
\author{Micha{\l} Kowalczyk}
\address{Departamento de Ingenier\'{\i}a Matem\'atica and Centro
de Modelamiento Matem\'atico (UMI 2807 CNRS), Universidad de Chile, Casilla
170 Correo 3, Santiago, Chile.}
\email {kowalczy@dim.uchile.cl}
\author{Andrew Lawrie}
\address{Department of Mathematics, Massachusetts Institute of Technology, 77 Massachusetts Ave, 2-267, Cambridge, MA 02139, U.S.A.}
\email{alawrie@mit.edu}
\keywords{kink; multi-soliton; large-time asymptotics; strong interaction}
\subjclass[2010]{35L71 (primary), 35B40, 37K40}
\thanks{J. Jendrej is supported by ANR-18-CE40-0028 project ESSED and Chilean projects FONDECYT 1170164 and France-Chile ECOS-Sud C18E06 project. M. Kowalczyk was partially funded by Chilean research grants FONDECYT 1170164,  France-Chile ECOS-Sud C18E06 and CMM Conicyt PIA AFB170001. A. Lawrie is supported by NSF grant DMS-1700127 and a Sloan Research Fellowship. The authors thank Y. Martel for a helpful observation that was used in the proof of Lemma~\ref{lem:phi4-lin-dt}.  }
\begin{document}

\begin{abstract}
This paper concerns classical nonlinear scalar field models on the real line.
If the potential is a symmetric double-well, such a model admits static solutions called kinks and antikinks,
which are perhaps the simplest examples of topological solitons.
We study pure kink-antikink pairs, which are solutions that converge in one infinite time direction
to a superposition of one kink and one antikink, without radiation.
Our main result is a complete classification of all kink-antikink pairs in the strongly interacting regime,
which means the speeds of the kinks tend asymptotically to zero.
We show that up to translation there is exactly one such solution,
and we give a precise description of the dynamics of the kink separation.
\end{abstract}

\maketitle

\section{Introduction}
\label{sec:intro}
\subsection{Setting of the problem}
\label{ssec:setting}

We study scalar field equations on the real line. 
Let $U: \bR \to [0, +\infty)$ be a function of class $C^\infty$ and consider the Lagrangian action, 
\begin{equation}
\label{eq:lagrange}
\scrL(\phi) = \int_{-\infty}^{\infty}\int_{-\infty}^{\infty} \Big(\frac 12(\partial_t \phi)^2 - \frac 12 (\partial_x\phi)^2 - U(\phi)\Big)\, \ud  x \ud t,
\end{equation}  
for real valued functions $\phi = \phi(t, x)$. The Euler-Lagrange equation associated to $\scrL$ is the nonlinear wave equation, 
\begin{equation}
\label{eq:phi4}
\partial_t^2 \phi(t, x) - \partial_x^2 \phi(t, x) + U'(\phi(t, x)) = 0, \qquad (t, x) \in \bR\times \bR,\ \phi(t, x) \in \bR, 
\end{equation}
We will study~\eqref{eq:phi4} for potentials $U$ that are even functions taking the global minimal value $U_{\min} = 0$,  and such that there are distinct real numbers $\phi_+ > 0$ and $\phi_- = -\phi_+$ so that 
\EQ{ \label{eq:U} 
&U(\phi_{-}) = U(\phi_+) = U_{\min} =  0, \\
&U(\phi) > 0 \text{ for }\phi \in (\phi_-, \phi_+), \\
&U''(\phi_-)  = U''(\phi_+) > 0.
}
Two classically studied examples of~\eqref{eq:phi4} with potentials as in~\eqref{eq:U} are the \emph{sine-Gordon equation}, 
\EQ{ \label{eq:sg} 
\partial_t^2 \phi(t, x) - \partial_x^2 \phi(t, x) -\sin \phi(t, x) = 0,
}
where we have taken $U( \phi) = 1+ \cos \phi$ and $\phi_+ = \pi$ above, and the \emph{$\phi^4$ model}, 
\EQ{ \label{eq:phi4-m} 
\partial_t^2 \phi(t, x) - \partial_x^2 \phi(t, x) - \phi(t, x) + \phi(t, x)^3 = 0
}
where $U( \phi) = \frac{1}{4}(1- \phi^2)^2$ and $\phi_+ = 1$. 

The potential energy $E_p$, the kinetic energy $E_k$, and the total energy $E$ associated with the equation \eqref{eq:phi4} are given by 
\begin{align}
E_p( \phi)  &= \int_{-\infty}^{+\infty}\Big(\frac 12 (\partial_x\phi)^2 + U(\phi)\Big)\ud x, \\
E_k(\phi) &= \int_{-\infty}^{+\infty}\frac 12(\partial_t \phi)^2\ud x, \\
E(\phi)  &= \int_{-\infty}^{+\infty}\Big(\frac 12(\partial_t \phi)^2+\frac 12 (\partial_x\phi)^2 + U(\phi)\Big)\ud x.
\end{align}
We say that a solution to \eqref{eq:phi4} is in the energy space if $E(\phi)$ is finite. For such a  solution the energy is conserved, i.e.,  $E(\phi(t,\cdot))=constant$. By a solution $\phi(t, x)$ of~\eqref{eq:phi4}, we always mean a strong solution in the energy space.
By standard arguments, the Cauchy problem for \eqref{eq:phi4} is locally well-posed for initial data $( \phi_0, \phi_1) \in H^1(\bR) \times L^2(\bR)$,
and globally well-posed under additional assumptions on $U$,
for instance if $U$ is globally Lipschitz or if $\lim_{\phi \to \pm\infty}U(\phi) = \infty$. 

Stationary solutions of  \eqref{eq:phi4}  are the critical points of the potential energy. The trivial ones include the vacuum fields $\phi(t, x) = \phi_{\pm}$, which are global minima of $E_p$. Importantly, there are also non-constant static solutions $\phi(t, x)$ called \textit{kinks} 
connecting the two vacua, that is for instance 
\EQ{ \label{eq:connect} 
\lim_{x \to -\infty}\phi(t, x) = \phi_- \mand \lim_{x \to \infty}\phi(t, x) = \phi_+ \quad \forall \, t \in \R.
}
All of these solutions are given by
\begin{equation}
\label{eq:kink}
\phi(t, x) = H(x - a),
\end{equation}
where $H(x)$ is an increasing, smooth, odd function that minimizes the potential energy restricted to the functions $\phi(x)$ satisfying~\eqref{eq:connect}, and $a \in \R$ is a parameter.
  For the sine-Gordon equation~\eqref{eq:sg} the kink is given by $H(x) = 4 \arctan(\eee^x) - \pi$ and for the $\phi^4$ model we have $H(x) = \tanh(x/ \sqrt{2})$. We will study the function $H$ for general $U$ as in~\eqref{eq:U} in detail in Section \ref{ssec:stationary}. 

In this paper we agree that solutions of the form \eqref{eq:kink} that are increasing will be called  {\it kinks} and those that are decreasing (i.e., that connect from $\phi_+$ at $- \infty$ to $\phi_-$ at $+ \infty$)  will be called   {\it antikinks}. The latter are all given by $\phi(t, x) = H(-x + a) = -H(x - a)$, where the last equality follows from the symmetry of $U$. 
ODE analysis shows that besides the vacuum fields, the kinks, and the antikinks, no other finite potential energy stationary solutions such that $\phi_-\leq \phi\leq \phi_+$ exist. 
We note that equation \eqref{eq:phi4} is invariant by Lorentz transformations and 
applying a  Lorentz boost we obtain moving kinks and antikinks:
\begin{equation}
\phi(t, x) = H(\gamma(x - vt - a)),\qquad \phi(t, x) = H(\gamma({-}x + vt + a))
\end{equation}
where $v \in (-1, 1)$ and $\gamma = (1 - v^2)^{-\frac 12}$.

Kinks and  antikinks  are the simplest examples of topological solitons (they are one-dimensional) and this perhaps explains why the wave equation \eqref{eq:phi4} is widely studied both as a model problem in physics and due to its own merit as an interesting and challenging mathematical problem. For example, the question of nonlinear stability of the kink for the $\phi^4$-model~\eqref{eq:phi4-m} is classical, but still open for general smooth perturbations; see the recent work of the second author with Martel and Mu\~noz~\cite{KMM} where stability of the $\phi^4$ kink was proved under odd perturbations. For some other, special potentials this problem was studied in \cite{MR2770013}, \cite{MR2835867}. 
On the mathematical physics side, we refer the reader to~\cite{PhysRevD.20.3120}, \cite{MS}
and the references therein for specific examples and their motivations.

\subsection{Main results}
\label{ssec:results}
In this paper we consider the question of kink-antikink solutions to~\eqref{eq:phi4} in what we call the {\it strongly interacting} regime. Multi-kinks are informally defined as solutions that converge to a superposition of a finite number of kinks and antikinks, without radiation, as $t \to \infty$. We will define ``strongly interacting'' precisely below, but informally this means the special class of kink-antikink pairs for which the speeds of both the kink and the antikink tend to zero as $t \to \infty$. An interesting aspect of this regime is that the dynamics are driven solely by nonlinear interactions between the kink and antikink. This is in contrast to a multi-kink configuration consisting of boosted kinks and antikinks (i.e., the kinks have a nontrivial asymptotic velocities),  where the nonlinear interactions between the kinks are negligible as compared to the internal dynamics of each kink determined by the Lorentz boost. 

One motivation for considering kink-antikink pairs in the strongly interacting regime is that they exhibit the following threshold behavior. The Bogomolny structure of the energy (see Section~\ref{ssec:coer-kink}) implies that the kink and antikink are the minimal energy configurations connecting two distinct vacuua. 
It readily follows that any topologically trivial solution (i.e., one that tends to the same vacuum, say $\phi_+$, as $x \to \pm \infty$) with energy strictly less than \emph{twice} the potential energy of the kink must remain uniformly bounded away from the other vacuum point, $\phi_-$.  It is natural to ask if there exist topologically trivial solutions with the least possible energy, $2E_p(H)$, that reach (or rather come arbitrarily close to) two distinct vacuum points. As a first candidate for such a threshold solution one can consider a superposition of a well separated kink and antikink. That is, we consider the function 
%
\[
w(x;a)=\phi_+-H(x+a)+H(x-a),\qquad a \gg 1.
\]
Note that $w$ satisfies, with some $c>0$,
\[
-\partial_{xx}w+U'(w)=O(e^{\,-ca}), \quad 
E_p(w(\cdot;a))=2 E_p(H) + O(e^{\,-ca}).
\]
In other words  $w$ 
is ``almost'' a stationary solution of \eqref{eq:phi4} when $a\gg 1$, with energy nearly equal to $2 E_p(H)$.
We have $\lim_{x \to -\infty}w(x; a) = \phi_+ = \lim_{x \to \infty}w(x; a)$,
while $w(0; a) = \phi_+ - 2H(a) = \phi_- + 2(\phi_+ - H(a))$ is close to the other vacuum $\phi_-$ when $a \gg 1$.
This motivates the study of the existence of an exact threshold solution of the form $\phi(t, x) = w(x; a(t)) + \eps(t, x)$ where $a(t) \to \infty, \abs{a'(t)} \to 0$, and $\| (\eps, \p_t \eps)(t) \|_{H^1 \times L^2} \to 0$ as $t \to \infty$. We make the  following more general definition. 





\begin{definition} \label{d:kak} 
We say that a solution $\phi(t, x)$ of \eqref{eq:phi4} is a \emph{strongly interacting kink-antikink pair}
if there exist real-valued functions $x_1(t)$ and $x_2(t)$ such that
\begin{gather}
\label{eq:norm-to-zero}
\lim_{t \to \infty} \Big( \|\partial_t \phi(t)\|_{L^2} + \|\phi(t) - (\phi_+ - H(\cdot - x_1(t)) + H(\cdot - x_2(t)))\|_{H^1}\Big) = 0, \\
\label{eq:dist-to-infty-0}
\lim_{t \to \infty} \big(x_2(t) - x_1(t)\big) = \infty.
\end{gather}

\end{definition}
We remark that if $\phi(t, x)$ is a strongly interacting kink-antikink pair, then
\EQ{
E( \phi) = 2 E_p(H), 
}
and would thus be a  topologically trivial solution with the minimal energy needed to (asymptotically) connect two distinct vacua, i.e., a threshold solution.
\begin{remark}
It is not difficult to see, using the analysis from Section~\ref{ssec:coer-kink} below, that one can equivalently
define a strongly interacting kink-antikink pair as a solution $\phi(t, x)$ of \eqref{eq:phi4}
such that $\lim_{x\to -\infty}\phi(t, x) = \lim_{x \to \infty}\phi(t, x) = \phi_+$, $E(\phi) = 2E_p(H)$
and $\lim_{t \to \infty}\phi(t, x_0(t)) = \phi_-$ for some real-valued function $x_0(t)$.\end{remark}
Our goal is to find and classify all strongly interacting kink-antikink pairs.   
 
 Before stating the main theorems we introduce the following explicit constants. Given $U$ as in~\eqref{eq:U}, we define, 
\begin{equation}
\label{eq:k-def}
\kappa := \exp\bigg(\int_0^{\phi_+} \bigg( \frac{\sqrt{U''(\phi_+)}}{\sqrt{2U(y)}} - \frac{1}{\phi_+ -y} \bigg)\ud y\bigg),
\end{equation}
and
\begin{equation}
\label{def AAA}
A:= \phi_+ \sqrt[4]{U''(\phi_+)}\Big(\int_{0}^{\phi_+}\sqrt{2U(y)}\ud y\Big)^{-\frac 12}\kappa.
\end{equation}
With this notation in hand we can state our main result. 
\begin{mainthm}[Existence and uniqueness of the strongly interacting kink-antikink pair] 
\label{thm:main}
There exist a $C^1$ function $x(t)$ and a solution $\bs \phi_{(2)}(t, x)$ of \eqref{eq:phi4}
such that for all $\epsilon>0$ and all $t \geq T_0 = T_0(\epsilon)$,   
\begin{equation}
\label{eq:x-asympt}
\big|x(t) - (U''(\phi_+))^{-\frac 12}\log(At)\big| \leq t^{-2 + \epsilon}, \qquad \big|x'(t) - (U''(\phi_+))^{-\frac 12}t^{-1}\big| \leq t^{-3 + \epsilon}
\end{equation}
and 
\begin{equation}
\label{eq:error-est}
\begin{aligned}
&\big\|\bs \phi_{(2)}(t) - \big(\phi_+ - H(\cdot + x(t)) + H(\cdot - x(t))\big)\big\|_{H^1} \\
&+ \big\|\partial_t \bs\phi_{(2)}(t) + x'(t)\big(\partial_x H(\cdot + x(t)) + \partial_x H(\cdot - x(t))\big) \big\|_{L^2} \leq t^{-2 + \epsilon}.
\end{aligned}
\end{equation}
Moreover, $\bs \phi_{ (2)}$ is the {\it unique} strongly interacting kink-antikink pair up to translation, i.e.,  if $\phi(t, x)$ is \textit{any} 
strongly interacting kink-antikink pair, then there exist $t_0, x_0 \in \R$ so that 
\EQ{
\phi(t, x) = \bs \phi_{(2)}(t- t_0, x-x_0). 
}
\end{mainthm}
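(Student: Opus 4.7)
My plan is to treat existence and uniqueness within a single modulation framework. For any candidate strongly interacting pair, write
\begin{equation*}
\phi(t,x) = Q(x;x(t)) + \eps(t,x), \qquad Q(x;a) := \phi_+ - H(x+a) + H(x-a),
\end{equation*}
subject to the orthogonality conditions $\la \eps(t), \p_x H(\cdot\pm x(t))\ra = \la \p_t\eps(t), \p_x H(\cdot\pm x(t))\ra = 0$. Substituting into \eqref{eq:phi4} and projecting onto the translation zero modes gives, at leading order, the effective ODE
\begin{equation*}
\ddot x(t) = -c\, e^{-2\sqrt{U''(\phi_+)}\,x(t)} + \mathcal{O}\big(\|\eps(t)\|_{H^1}^2\big) + \mathcal{O}\big(e^{-3\sqrt{U''(\phi_+)}\,x(t)}\big),
\end{equation*}
where the coefficient $c$ is computed explicitly from the tail asymptotic $H(y) = \phi_+ - \kappa\,e^{-\sqrt{U''(\phi_+)}\,y} + \mathcal{O}(e^{-2\sqrt{U''(\phi_+)}\,y})$ as $y\to\infty$. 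The ansatz $x(t) \sim \alpha \log t$ forces $\alpha = (U''(\phi_+))^{-1/2}$ by matching $\ddot x \sim -\alpha/t^{2}$ with the exponential force $\sim t^{-2}$, and evaluating $c$ in terms of $H$ and $U$ produces precisely the constant $A$ of \eqref{def AAA}, yielding the leading profile $\bar x(t) := (U''(\phi_+))^{-1/2}\log(At)$ of \eqref{eq:x-asympt}.

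For existence I would construct $\bs\phi_{(2)}$ as a subsequential backward limit. Pick $T_n \to \infty$ and prescribe initial data at $t = T_n$ by
\begin{equation*}
\phi(T_n,\cdot) = Q(\cdot\,;\bar x(T_n)), \qquad \p_t\phi(T_n,\cdot) = -\bar x'(T_n)\big(\p_x H(\cdot+\bar x(T_n)) + \p_x H(\cdot-\bar x(T_n))\big),
\end{equation*}
solve \eqref{eq:phi4} backward to a fixed $T_0$, and establish a uniform bootstrap
\begin{equation*}
\|(\eps,\p_t\eps)(t)\|_{H^1\times L^2} + \abs{x(t)-\bar x(t)} + t\,\abs{x'(t)-\bar x'(t)} \leq t^{-2+\epsilon}, \qquad t\in[T_0,T_n].
\end{equation*}
Standard local well-posedness and weak compactness then yield a limit solution on $[T_0,\infty)$ realizing \eqref{eq:x-asympt}--\eqref{eq:error-est}.

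The heart of the matter, and the main obstacle, is closing the bootstrap for $(\eps,\p_t\eps)$. The profile $Q(\cdot;a)$ is a mountain-pass critical point of $E_p$, so the linearized operator $L_a := -\p_x^2 + U''(Q(\cdot;a))$ has exactly one negative eigenvalue with localized eigenfunction $Y_a$; this is the direction that would otherwise drive an exponential instability in $\eps$. I would control $\eps$ through a modified energy functional
\begin{equation*}
\mathcal H(t) := \tfrac12 \la L_{x(t)}\eps,\eps\ra + \tfrac12\|\p_t\eps\|_{L^2}^2 + B\big(\eps,\p_t\eps;x(t)\big),
\end{equation*}
with $B$ a localized virial/momentum correction of the type used in stability arguments around mountain-pass critical points. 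Once coercivity of $\mathcal H$ is established modulo the unstable direction $Y_{x(t)}$, a topological shooting argument on the one-dimensional unstable component at $t=T_0$ — equivalently, a one-parameter adjustment of the initial shift in $x(T_n)$ — selects, for each $n$, data producing a solution whose unstable component remains negligible on $[T_0,T_n]$. Proving sharp coercivity of $\mathcal H$ and handling the cross terms generated by the time dependence of $x(t)$ are the most delicate technical points.

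For uniqueness, let $\phi$ be any strongly interacting kink-antikink pair with modulation parameter $\tilde x(t)\to\infty$. The same modulation analysis forces $\tilde x$ to satisfy the effective ODE above up to admissible errors, so $\tilde x(t) = \bar x(t) + o(1)$. After a spatial translation aligning the symmetry axes of $\phi$ and $\bs\phi_{(2)}$, the difference $\delta := \phi - \bs\phi_{(2)}$ solves a wave equation linearized around $\bs\phi_{(2)}$ with quadratic remainder and satisfies $\|(\delta,\p_t\delta)(t)\|_{H^1\times L^2} \to 0$ as $t\to\infty$. Integrating the modified energy estimate backward from $t = \infty$ forces the stable and neutral components of $\delta$ to vanish identically, while the unstable component must also vanish: otherwise it would grow exponentially forward in $t$ and contradict the decay of $\delta$. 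A final time translation absorbs the residual one-dimensional freedom to give $\phi(t,x) = \bs\phi_{(2)}(t-t_0,x-x_0)$.
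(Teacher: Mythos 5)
Your overall architecture (modulation around the two-kink ansatz, effective ODE for the separation, backward limit for existence, energy estimates for the error) points in the right direction, but three steps that you treat as routine are in fact the crux of the problem, and as written they fail.

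First, the effective ODE. The naive projection of \eqref{eq:phi4} onto the translation modes does \emph{not} yield $\ddot x = -c\,e^{-2\sqrt{U''(\phi_+)}x} + O(\|\eps\|_{H^1}^2)+\dots$. Differentiating the orthogonality conditions twice produces cross terms of the form $x'(t)\,\la \p_x^2 H(\cdot\mp x(t)), \p_t\eps(t)\ra$, of size $|x'|\,\|\p_t\eps\|_{L^2}\sim t^{-2}$ --- exactly the same order as the attractive force $e^{-2\sqrt{U''(\phi_+)}x(t)}\sim t^{-2}$ --- and with no sign or cancellation visible at this level. (Your additional orthogonality of $\p_t\eps$ against $\p_x H(\cdot\pm x)$ does not remove them, since they involve $\p_x^2 H$; moreover imposing four orthogonality conditions with a single modulation parameter is over-determined, and a general, non-symmetric pair cannot be decomposed with one symmetric $x(t)$ at all.) This is precisely why the paper replaces $x_j'$ by corrected momenta $p_j$ built from a localized momentum functional (Lemma~\ref{lem:norm-form}): only after that cancellation is the error genuinely $o(t^{-2})$, and only then can the constant $A$ and the law $x(t)\approx (U''(\phi_+))^{-1/2}\log(At)$ be extracted.

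Second, the instability mechanism and, with it, the uniqueness argument are misidentified. The negative eigenvalue of $-\p_x^2+U''(\phi_+-H(\cdot+a)+H(\cdot-a))$ is exponentially small in the separation and its eigenfunction lies asymptotically in the span of the translation modes already removed by the orthogonality conditions; Lemma~\ref{lem:D2H} gives coercivity modulo exactly those two modes, so there is no residual localized direction $Y_a$ driving exponential growth of $\eps$. The true obstruction is the polynomially growing mode $t^{2}$ of the Euler equation $z''=2t^{-2}z$ governing perturbations of the separation; the paper kills it through the $W_{\alpha,\beta}$ spaces and Lemma~\ref{lem:Wab-prop}(iii), not through a shooting argument on $\eps$. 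Consequently your uniqueness step --- ``the unstable component would grow exponentially forward in $t$'' --- has no mechanism behind it, and ``integrating the modified energy backward from $t=\infty$'' cannot force $\delta\equiv 0$, because the linearization admits a two-parameter family of solutions decaying only like $t^{-1}$ (tangent to time and space translation). Identifying an arbitrary pair with a translate of $\bs\phi_{(2)}$ requires the quantitative preliminary classification of \emph{all} pairs (Proposition~\ref{prop:regime}), uniqueness of the fixed point in the weighted spaces (Propositions~\ref{prop:lyap-schmidt} and the contraction for the trajectories), and the asymptotic ODE analysis of Lemma~\ref{lem:traj-asym} that produces $t_0$ and $x_0$; none of this is supplied by the proposal.
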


\begin{remark} 
We expect that the subset of the energy space given by $\mathcal M=\{\bs \phi_{(2)}(t-t_0, x-x_0)\mid (t_0, x_0)\in \bR\times\bR\}$ is in fact a {\it smooth} two dimensional manifold,  but we chose not to pursue this issue here.
\end{remark} 
\begin{remark} 
One can observe in \eqref{eq:x-asympt} that the main order term of $x'(t)$, namely $(U''(\phi_+))^{-\frac 12}t^{-1}$,
is the time derivative of $(U''(\phi_+))^{-\frac 12}\log(A t)$, which is the main order term of $x(t)$.
Similarly, in the estimate \eqref{eq:error-est} the term $x'(t)\big(\partial_x H(\cdot + x(t)) + \partial_x H(\cdot - x(t))\big)$
in the second line is the time derivative of the term ${-}\big(\phi_+ - H(\cdot + x(t)) + H(\cdot - x(t))\big)$ from the first line. Thus $\bs \phi_{(2)}(t, x)$ is in fact a strongly interacting kink-antikink pair in the sense of Definition~\ref{d:kak}. 
Such solutions are discussed in the mathematical physics literature.
For instance, \cite[Chapter 5.2]{MS} contains formal and numerical predictions about the evolution of an initial configuration
composed of a stationary kink and antikink placed at a large distance.
As we make the initial separation tend to infinity, the corresponding solutions converge to strongly interacting kink-antikink pairs.

We highlight the uniqueness statement in Theorem~\ref{thm:main}, which is new even for the completely integrable sine-Gordon equation; see the further discussion of this case below. 
\end{remark} 

\begin{remark} 
The sine-Gordon equation~\eqref{eq:sg} is a very special case of~\eqref{eq:phi4} as it is a canonical example of a completely integrable equation and one can write down explicit  solutions. 
An example of a strongly interacting kink-antikink pair is furnished by 
\[
\bs \phi_{SG, (2)}(t,x)= \pi - 4\arctan\left(\frac{t}{\cosh x}\right),
\]
and the family $\mathcal M$  of such pairs is given by time and space translations of $\phi_{SG,(2)}$.
Although in this case $\mathcal M$ is explicit, the uniqueness part of our theorem is novel and does not seem to follow directly from the fact that the sine-Gordon  equation is completely  integrable.  

Note that for $t\gg 1$  we have (uniformly in $x$)
\[
\bs \phi_{SG, (2)}(t,x) \approx  \pi -  4 \arctan \big(e^{\,x+\log 2t}\big)+ 4\arctan\big(e^{\,x-\log 2t}\big) .
\]
As expected $\bs \phi_{SG, (2)}$ is  for large positive  times approximated by the superposition of the sine-Gordon antikink $-H(x)=\pi - 4\arctan(e^{\,x})$ and the kink $H(x)=4\arctan(e^{\,x}) - \pi$, shifted respectively to  $x_1(t)=-\log 2t$ and $x_2(t)=\log 2t$. 

\end{remark} 

\begin{remark}
Kink-antikink pairs in the strongly interacting regime considered in Theorem~\ref{thm:main} are threshold solutions in the sense that they have the minimal energy $E = K E_p(H)$ needed to contain $K$ distinct kink structures.  Alternatively, one could consider solutions that are approximately the superposition of Lorentz boosted kinks and antikinks with nontrivial velocities, which we dub the weakly interacting regime. Any $K$-kink solution of the latter type would have nontrivial asymptotic kinetic energy, and thus total energy strictly above $K E_p(H)$. 
The weakly interacting regime should be  accessible given the existing literature (or via the techniques introduced in this paper), in particular given the landmark works of Merle~\cite{Merle90}, Martel~\cite{Martel05}, and Martel, Merle~\cite{MM06}, who proved the existence of $N$-soliton solutions to g-KdV and NLS with \textit{distinct, nontrivial velocities};   see also Martel, Merle, Tsai~\cite{MaMeTs} and C\^ote, Martel, Merle~\cite{CMM11}.  Note that in~\cite{Martel05}, Martel also established uniqueness of weakly interacting  $N$-soliton for g-KdV for each given set of distinct velocities.   In the context of nonlinear waves, see the work of C\^{o}te, Mu\~noz~\cite{CM}, who constructed $N$-solitons solutions with distinct velocities for nonlinear Klein-Gordon equations.  We emphasize a key distinction:  in the strongly interacting regime considered here, the dynamics  are driven solely by nonlinear interactions between the kinks, whereas in the weakly interacting regime the soliton interactions are negligible to main order.
Probably the first rigorous construction of a strongly interacting multi-soliton was obtained by Martel and Rapha\"el \cite{MaRa18}.
\end{remark}


\begin{remark}
The solution $\bs \phi_{(2)}$ in Theorem~\ref{thm:main} contains an antikink moving to the left and a kink moving to the right in forward time. Since~\eqref{eq:phi4} is time-reversible, one may ask what happens when time is run backwards and the kink and antikink structures move towards each other and eventually collide (i.e., the distance $x_2(t) - x_1(t)$ becomes $\simeq 1$). The folklore conjecture is that whereas soliton collisions are known to be {\it elastic} for the integrable sine-Gordon equation, collisions should  be {\it inelastic} for equations that are not completely integrable, i.e. for the $\phi^4$-model~\eqref{eq:phi4-m} and for the general equation~\eqref{eq:phi4}. Here {\it inelastic} means that the collision results in some quantum of energy radiating away freely as $ t \to - \infty$. 

The threshold solution $\bs \phi_{(2)}$ is an interesting solution for which to consider the collision problem. Indeed, if any part of the solution breaks off as free radiation after the collision, the fact that it is the minimal energy topologically trivial kink-anitkink structure suggests that  the entire solution should disperse as $t \to - \infty$. Such a phenomenon was established by the first and third authors for the minimal energy $2$-bubble configuration for the $k$-equivariant $\R^2  \to \Sp^2$ wave maps problem in~\cite{JL1}. A key ingredient in~\cite{JL1} is  a so-called threshold theorem (proved earlier in~\cite{CKLS1}), which says that any topologically trivial $k$-equivariant wave map with energy less than twice the energy of the $k$-equivariant harmonic map $Q$ must disperse freely in both time directions.
However, an analogous threshold theorem for~\eqref{eq:phi4-m} does not seem within reach. 
Even the small energy problem is extremely challenging given the slow dispersive decay of the $1d$ Klein-Gordon waves (which appear after linearization about the vacua $\phi_{\pm}$); see Delort~\cite{Delort} and Hayashi-Naumkin~\cite{HN1, HN2} on the modified scattering procedure for NLKG solutions with cubic and quadratic nonlinearities and small, decaying initial data.
In general, there is very little known about the collision problem.
We refer to Martel and Merle~\cite{MM11, MM11-2} for rigorous results in the case of the gKdV equation.
\end{remark}

\subsection{A summary of the proof} 

In this section we give a brief outline of the paper, focusing on the proof of Theorem~\ref{thm:main}. 

Section~\ref{sec:coer} gives a detailed study of the kink solution $H(x)$ and the coercivity properties of the operator obtained by linearization. We establish several technical lemmas, including a computation of the formal attraction force between a well separated kink-antikink pair. This section is technical in nature and can be skimmed on a first reading.  

The argument used to prove Theorem~\ref{thm:main} is then divided in two parts. First, in Section~\ref{sec:mod} we give a preliminary dynamical classification of {\it all} finite energy strongly interacting kink-antikink pairs. Then, in Section~\ref{sec:nonlin-anal} we prove the existence of a kink-antikink pair while also establishing its uniqueness in a certain $t$-weighted function space. The dynamical classification result of Section~\ref{sec:mod} is then used to show that {\it every} strongly interacting kink-antikink pair lies in the function space in which uniqueness was established, thus giving uniqueness in the energy space and finishing the proof of Theorem~\ref{thm:main}.  The structure of this argument, which establishes {\it uniqueness} of the multi-kink in addition to its existence, is novel and should be of independent interest.  We give a rough sketch of how this works below. 

\textbf{Part 1:} To establish the preliminary classification we use a scheme similar to the one introduced by the first and third authors to  classify all two bubble wave maps in~\cite{JL1}, and by the first author to classify  strongly interacting two-solitons for gKdV in~\cite{jendrej2018dynamics}. We assume that $\phi(t, x)$ is a strongly interacting kink-antikink pair, and without loss of generality that $\phi_{\pm} = \pm1$. This means that for large enough times, $\phi$ admits a decomposition of the form 
\EQ{
\phi(t, x) = 1 - H( x - x_1(t)) + H( x- x_2(t)) + g(t, x) 
}
satisfying conditions \eqref{eq:norm-to-zero} and \eqref{eq:dist-to-infty-0}, or equivalently
\begin{align} \label{eq:g-dec}
\lim_{t \to \infty}\|(g(t), \partial_t \phi(t))\|_{H^1 \times L^2 (\R)} = 0 , \quad  
\lim_{t \to \infty} (x_2(t) - x_1(t)) = \infty.
\end{align}
The goal is to turn the qualitative assumptions above into quantitative information on the dynamics and decay of $(g(t), \p_t g(t), x_1(t), x_2(t))$. 

By standard modulation theoretic arguments, we fix the unique choice of $x_1(t)$ and $x_2(t)$ for which $g(t)$ satisfies the orthogonality conditions
\begin{equation} \label{eq:ortho} 
\la \partial_x H(\cdot - x_1(t)), g(t)\ra = 0,\qquad \la \partial_x H(\cdot - x_2(t)), g(t)\ra = 0.
\end{equation}
Differentiation of the orthogonality conditions,  use of the equation satisfied by $g(t, x)$, and an argument based on the Taylor expansion of the energy are enough to give preliminary estimates on the size of $|x_j'(t)|, |x_j''(t)|$ and $\|g(t), \p_t g(t) \|_{H^1 \times L^2}$  in terms of the distance $x_2(t) - x_1(t)$ between the kinks. However, as one might expect, these standard arguments are not sufficient to understand the dynamics in a useful way. At this point, we perform an ad hoc change of unknowns, replacing $x_j'(t)$ with corrected variables $p_j(t)$. The point is that while the $p_j(t)$ are small perturbations of $x_j'(t)$,  the correction, which is built using a localized momentum functional, cancels terms of indeterminate sign in the equations for $x_j''(t)$. We reveal that the dynamics of $p_j(t)$, and hence of $x_j'(t)$, are determined, up to negligible error, by the nonlinear interaction force $F( x_2(t) - x_1(t))$ between the two kinks; see Lemma~\ref{lem:norm-form}. A study of the ODE satisfied by the $p_j(t)$ yields bounds on the distance between the kinks, $ \simeq 2\log t$, as well as decay rates for $x_j'(t), x_j''(t)$.
We remark that  the technique of modifying a modulation parameter with a localized functional based on an underlying symmetry was used
in a similar context by the first author in~\cite{JJ-APDE}. 

At the conclusion of Section~\ref{sec:mod}, one could rather easily construct a strongly interacting kink-antikink pair. For example, see the construction performed in the recent work of the first and third authors with Rodriguez on singular wave maps in ~\cite[Section 5]{JLR1}, which used an analogous preliminary classification of the dynamics to pass to a weak limit of a sequence of well chosen approximations to the desired solution; see also previous work of Rodriguez~\cite{R19}. However, such constructions fail to establish {\it uniqueness}, which is a main goal of this work. To this end, we introduce a new version of Liapunov-Schmidt reduction in the setting of dispersive equations, inspired in part by work of the second author on the $2d$ elliptic Allen-Cahn problem in~\cite{MR2557944}.   That we can use this philosophy not just to construct but to prove unconditional uniqueness is novel, and relies crucially on the preliminary classification in Section~\ref{sec:mod}. 



\textbf{Part 2:} By Liapunov-Schmidt reduction, we simply mean that the process of finding the desired solution will be carried out in two steps described below. The implementation of these steps is of course quite different from the elliptic case, as we are here dealing with a nonlinear wave equation. 

We assume {\it a priori} that 
\EQ{\label{eq:ansatz} 
\phi(t) =1-H( \cdot - x_1(t))+H(\cdot - x_2(t))+g(t)
}
 and that \eqref{eq:g-dec} and \eqref{eq:ortho} hold.  We project the equation \eqref{eq:phi4} onto the space spanned by $\partial_x H(  \cdot - x_j(t))$, $j=1,2$ and onto its orthogonal complement. This way we are lead to solving the {\it projected equation}
\begin{equation}
\label{eq:proj}
\partial_{t}^2 \phi - \partial_{x}^2\phi + U'(\phi) = \lambda_1(t)\partial_x H( \cdot - x_1(t)) +\lambda_2(t)\partial_x H( \cdot - x_2(t)) 
\end{equation}
and what is referred to as the  {\it bifurcation equation}
\begin{equation}
\label{eq:bif}
\lambda_1(t)=0, \qquad \lambda_2(t)=0, 
\end{equation}
see for example~\cite[Section 2.4]{chow-hale}. Any $(g(t, x), x_1(t), x_2(t))$ that solves  both equations is the desired kink-antikink pair. 

{\it Step 1:} The first step is to solve~\eqref{eq:proj} by finding unique $(g(t,x), \lambda_j(t))$, for {\it given fixed} $x_j(t)$'s, within function spaces motivated by the classification result. The core ingredients in this step are energy-type  estimates for the linearized equation followed by a contraction mapping argument. Of course the linearized potential is time dependent (the kinks are moving), so a naive definition of the energy functional is not sufficient. We design a modified energy, namely a mixed energy/localized momentum functional, where a local momentum term is added to remove terms of critical size but indeterminate sign after differentiation. The addition of the localized momentum correction term is analogous to the mixed energy-localized virial functional used by the first author to study $2$-bubble energy critical waves in~\cite{JJ-AJM}, which drew its inspiration from Rapha\"el, Szeftel~\cite{RaSz11}. Here the underlying symmetry yielding the modulation parameters is translation and hence the correction is built from the generator of momentum, where in~\cite{JJ-AJM} the symmetry is scaling, which necessitates a localized virial correction. Our argument   requires $g(t, x)$ to exhibit  a quantitative improvement in time decay over what is given by the preliminary classification. One way of showing improved decay for the error $g$ would be to further refine the ansatz, i.e., extract the next order profiles from $g$ before imposing orthogonality conditions. Here we pursue an alternative method to obtain the improvement,  which consists of a further modification of the energy functional designed to exploit additional decay of the time derivative of the forcing; see Lemma~\ref{lem:phi4-lin-dt}. 

{\it Step 2:} The second step is to solve the bifurcation equation~\eqref{eq:bif}. In other words we seek  the unique pair of trajectories $(x_1(t), x_2(t))$ such that the corresponding triplet $(g(t), \lambda_1(t), \lambda_2(t))$ found in the first step satisfies $\lambda_1(t) = \lambda _2(t) \equiv 0$. We find that~\eqref{eq:bif} is a non local and nonlinear system of second order ODEs for $(x_1(t), x_2(t))$. To set up a contraction mapping, we must compare the solutions found in Step 1 arising from distinct pairs of trajectories $(x_1(t), x_2(t))$. This leads to a main difficulty in the method, as the chosen orthogonality conditions {\it depend on the choice of the trajectory}; see Lemma~\ref{lem:path-dep}.
Crucial to the entire argument of course, is the design of the function spaces in which the contraction mapping arguments are performed. 

By combining Parts 1 and 2 outlined above, the proof of Theorem 1 is completed at the end of Section~\ref{sec:nonlin-anal}. 

\subsection{Notation}
\label{ssec:notation}
Even if $v(x)$ is a function of one variable $x$, we often write $\partial_x v(x)$
instead of $v'(x)$ to denote the derivative. The prime notation is only used
for the time derivative of a function of one variable $t$
and for the derivative of the potential $U$.

We now define some  function spaces frequently used in the paper.
Let $\gamma, \beta, \alpha \in \bR$, $T_0 > 0$ and $z: [T_0, \infty) \to \bR$ a continuous function. We set
\begin{align}
\|z\|_{N_\gamma} &:= \sup_{t \geq T_0} t^\gamma |z(t)|, \\
\label{eq:Wab-def} \|z\|_{W_{\alpha, \beta}} &:= \sup_{\tau \geq t \geq T_0} t^{\beta - \alpha} \Big|\int_t^\tau s^\alpha z(s) \ud s\Big|.
\end{align}
If $z$ is twice continuously differentiable, we set
\begin{equation}
\|z\|_{S_\gamma} := \|z\|_{N_\gamma} + \|z'\|_{N_{\gamma+1}} + \|z''\|_{N_{\gamma + 1}}.
\end{equation}
Note that we are using the same time weight for $z'$ and $z''$.

If $z$ is a continuous function from  $[T_0, \infty)$ to some Banach space $E$, we denote
\begin{equation}
\|z\|_{N_\gamma(E)} := \big\| t \mapsto \|z(t)\|_E \big\|_{N_\gamma}.
\end{equation}
If $z$ is twice continuously differentiable function from  $[T_0, \infty)$ to $E$, we denote
\begin{equation}
\|z\|_{S_\gamma(E)} := \|z\|_{N_\gamma(E)} + \|z'\|_{N_{\gamma+1}(E)} + \|z''\|_{N_{\gamma + 1}(E)}.
\end{equation}
If the space $E$ is clear from the context, we write $N_\gamma$
instead of $N_\gamma(E)$ and $S_\gamma$ instead of $S_\gamma(E)$.
We define in the usual way the Banach spaces $N_\gamma(E)$ and $S_\gamma(E)$ as the completion of the space
of smooth compactly supported functions $[T_0, \infty) \to E$ for the corresponding norm.
Note that if $z \in N_\gamma(E)$, then $z$ is a continuous function from $[T_0, \infty)$ to $E$,
and if $z \in S_\gamma(E)$, then $z$ is a twice continuously differentiable function from $[T_0, \infty)$ to $E$.

\begin{remark}
We should keep in mind that all these norms depend on $T_0$. Often we can make some constants small by taking $T_0$ large enough.
For example, if $\gamma_1 < \gamma_2$ and $c_0 > 0$ is a small constant, then $\|\cdot\|_{N_{\gamma_1}} \leq c_0 \|\cdot \|_{N_{\gamma_2}}$
if $T_0$ is large enough (depending only on $\gamma_1$, $\gamma_2$ and $c_0$). We will use this fact frequently.
\end{remark}

We conclude this subsection with some additional notational conventions. 
\begin{itemize} 
\item If $\|\cdot\|_A$ and $\|\cdot\|_B$ are two norms, we denote $\|\cdot\|_{A \cap B} := \max(\|\cdot\|_A, \|\cdot\|_B)$.

\item We denote $\cE := H^1(\bR) \times L^2(\bR)$ (the energy space).

\item For $u, v: \bR \to \bR$ we write $\la u, v\ra := \int_{\bR} uv\ud x$, whenever this expression makes sense.

\item We denote $\vD$ and $\vD^2$ the first and second Fr\'echet derivatives of a functional.

\item We denote $x_+$ the positive part of $x$, in other words $x_+ = x$ if $x \geq 0$ and $x_+ = 0$ otherwise.

\item We take $\chi: \bR \to [0, 1]$ to be a decreasing $C^\infty$ function
such that $\chi(x) = 1$ for $x \leq \frac 13$
and $\chi(x) = 0$ for $x \geq \frac 23$.
\end{itemize} 


\section{Potential energy and interaction of a kink-antikink pair}
\label{sec:coer}
In this section, we analyse configurations close to a superposition of a well-separated kink and antikink at a fixed time.
We prove coercivity of the potential energy and prove bounds on various interaction terms, which will be used in later sections.

We note that by changing $U(\phi)$ to $\wt U(\phi) := \frac{1}{(\phi_+)^2U''(\phi_+)}U(\phi_+ \phi)$,
without loss of generality we can assume that $\phi_+=-\phi_-=1$ and $U''(-1) = U''(1) = 1$.
Indeed, $\phi(t, x)$ solves \eqref{eq:phi4} if and only if
\begin{equation}
\wt \phi(t, x) := \frac{1}{\phi_+}\phi\bigg(\frac{t}{\sqrt{U''(\phi_+)}}, \frac{x}{\sqrt{U''(\phi_+)}}\bigg)
\end{equation}
solves the same equation, but with the potential $\wt U(\phi)$ instead of $U(\phi)$.
For the kink $H$ of the original problem  this amounts to
\[
\tilde H(x)=\frac{1}{\phi_+} H\bigg(\frac{x}{\sqrt{U''(\phi_+)}}\bigg).
\]
Thus in the rest of this paper  we always assume that $\phi_+=1$, $\phi_-=-1$ and $U''(-1) = U''(1) = 1$.
\subsection{Stationary solutions}
\label{ssec:stationary}
A stationary field $\phi(t, x) = \psi(x)$ is a solution of \eqref{eq:phi4} if and only if
\begin{equation}
\label{eq:psi4}
\partial_x^2\psi(x) = U'(\psi(x)),\qquad\text{for all }x\in \bR.
\end{equation}
We seek solutions of \eqref{eq:psi4} having finite potential energy $E_p(\psi)$.
Since $U(\psi) \geq 0$ for $\psi \in \bR$, the condition $E_p(\psi) < \infty$ implies
\begin{align}
\label{eq:psi4-H1}
&\int_{-\infty}^{+\infty}\frac 12 (\partial_x \psi(x))^2 \ud x < \infty, \\
\label{eq:psi4-U}
&\int_{-\infty}^{+\infty}U(\psi(x)) \ud x < \infty.
\end{align}
From \eqref{eq:psi4-H1} we have $\psi \in C(\bR)$,
so \eqref{eq:psi4} and $U \in C^\infty(\bR)$ yield $\psi \in C^\infty(\bR)$.
Multiplying \eqref{eq:psi4} by $\partial_x \psi$ we get
\begin{equation}
\partial_x\Big(\frac 12 (\partial_x \psi)^2 - U(\psi)\Big)
= \partial_x \psi\big(\partial_x^2 \psi - U'(\psi)\big) = 0,
\end{equation}
so $\frac 12 (\partial_x \psi(x))^2 - U(\psi(x)) = k$ is a constant.
But then \eqref{eq:psi4-H1} and \eqref{eq:psi4-U} imply $k = 0$.
We obtain the first order Bogomolny equations:
\begin{equation}
\label{eq:bogom}
\partial_x\psi(x) = \sqrt{2U(\psi(x))}\quad\text{or}\quad \partial_x\psi(x) = -\sqrt{2U(\psi(x))}.
\end{equation}
We consider the first case, since the second is obtained by changing $x$ to $-x$.
If $\psi$ connects the two vacua $-1$ and $1$, then there exists $a \in \bR$
such that $\psi(a) = 0$. The solution of \eqref{eq:bogom}
with this initial condition is
$\psi(x) = H(x - a)$,
where $H(x)$ is defined by
\begin{equation}
\label{eq:H-def}
H(x) := G^{-1}(x),\quad\text{with}\ G(\psi) := \int_0^\psi\frac{\ud y}{\sqrt{2U(y)}}.
\end{equation}
\begin{proposition}
\label{prop:prop-H}
The function $H(x)$ defined by \eqref{eq:H-def} is of class $C^\infty(\bR)$
and there exist constants $\kappa > 0$ and $C > 0$ such that for all $x \in \bR$
\begin{gather}
\label{eq:H-asym-m}
\big|H(x)+1- \kappa \eee^{ x}\big| + \big|\partial_x H(x) -  \kappa \eee^{ x}\big| + \big|\partial_x^2 H(x) -  \kappa \eee^{ x}\big| \leq C\eee^{2 x}, \\
\label{eq:H-asym-p}
\big|H(x)-1+ \kappa \eee^{- x}\big| + \big|\partial_x H(x) -  \kappa \eee^{- x}\big| + \big|\partial_x^2 H(x) + \kappa \eee^{- x}\big| \leq C\eee^{-2 x}.
\end{gather}
\end{proposition}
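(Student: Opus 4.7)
The plan is to exploit the Bogomolny equation $\partial_x H = \sqrt{2U(H)}$ together with the relation $G(H(x)) = x$, extract the asymptotics from an explicit expansion of $G$ near the vacuum $\phi = 1$, and appeal to the oddness $H(-x) = -H(x)$ (which follows from the evenness of $U$ and uniqueness for \eqref{eq:psi4}) to reduce to the analysis as $x \to +\infty$. Throughout I will use the normalization $\phi_\pm = \pm 1$, $U''(\pm 1) = 1$ already adopted in the paper.

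First, to see $H \in C^\infty(\bR)$: since $U > 0$ and smooth on $(-1, 1)$, the function $(2U(y))^{-1/2}$ is smooth there, so $G \in C^\infty((-1, 1))$ with $G' > 0$. Near $y = \pm 1$ the conditions $U(\pm 1) = U'(\pm 1) = 0$ and $U''(\pm 1) = 1$ give the local expansion $U(y) = \tfrac{1}{2}(1 \mp y)^2 (1 + O(1 \mp y))$, hence $(2U(y))^{-1/2} \sim 1/|1 \mp y|$ is not integrable at $\pm 1$, so $G:(-1, 1) \to \bR$ is a $C^\infty$ bijection and $H = G^{-1}$ is smooth on $\bR$.

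Next I would expand $G$ near $\phi = 1$ using the decomposition
\begin{equation*}
G(\phi) = -\log(1 - \phi) + \int_0^\phi \left(\frac{1}{\sqrt{2U(y)}} - \frac{1}{1 - y}\right) \ud y,
\end{equation*}
which isolates the logarithmic singularity. The local expansion of $U$ shows that the remaining integrand is bounded near $y = 1$, so the integral converges as $\phi \to 1^-$ to exactly $\log \kappa$ (matching \eqref{eq:k-def} under our normalization), and the tail $r(\phi) := \int_\phi^1 \big((2U(y))^{-1/2} - (1-y)^{-1}\big) \ud y$ satisfies $r(\phi) = O(1 - \phi)$. Substituting $x = G(H(x))$ and exponentiating yields $1 - H(x) = \kappa \eee^{-x} \eee^{-r(H(x))}$; since $r(H(x)) = O(1 - H(x))$, a one-step bootstrap first gives $1 - H(x) = O(\eee^{-x})$ and then refines to $1 - H(x) = \kappa \eee^{-x} + O(\eee^{-2x})$, which is the first bound of \eqref{eq:H-asym-p}.

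The two remaining estimates in \eqref{eq:H-asym-p} follow by plugging this into the ODE. From $\partial_x H = \sqrt{2U(H)} = (1 - H)(1 + O(1 - H))$ I obtain $\partial_x H(x) = \kappa \eee^{-x} + O(\eee^{-2x})$; differentiating once more and using $\partial_x^2 H = U'(H)$ together with the Taylor expansion $U'(\phi) = -(1 - \phi) + O((1 - \phi)^2)$ at $\phi = 1$ yields $\partial_x^2 H(x) = -\kappa \eee^{-x} + O(\eee^{-2x})$. The mirror estimate \eqref{eq:H-asym-m} is then immediate from the oddness $H(-x) = -H(x)$, which makes $\partial_x H$ even and $\partial_x^2 H$ odd. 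No serious obstacle is anticipated; the only care needed is to identify the constant produced by the inversion of $G$ with the explicit $\kappa$ in \eqref{eq:k-def} and to control the correction $r(H(x))$ uniformly, both of which reduce to the local expansion of $U$ at the vacuum.
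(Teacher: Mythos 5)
Your proposal is correct and follows essentially the same route as the paper: expand $G$ near the vacuum to isolate the logarithmic singularity, identify the constant $\kappa$, bootstrap the relation $x=G(H(x))$ to get the $O(\eee^{-2x})$ correction, and then read off the derivative asymptotics from $\partial_x H=\sqrt{2U(H)}$ and $\partial_x^2 H=U'(H)$. The only cosmetic differences are that you invoke the oddness of $H$ for the $x\to-\infty$ estimate (where the paper just says the argument is similar), and your sign $U'(\phi)=-(1-\phi)+O((1-\phi)^2)$ is the correct one consistent with \eqref{eq:H-asym-p}.
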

\begin{proof}
We only prove \eqref{eq:H-asym-p}, which provides the asymptotic behavior of $H(x)$
for $x \to \infty$. The arguments for \eqref{eq:H-asym-m} are very similar.

Using the third order Taylor expansion of $U(y)$ around $y = 1$ one obtains $\frac{1}{\sqrt{2U(y)}} - \frac{1}{1-y} = O(1)$
as $y \to 1$, thus
\begin{equation}
G(\psi) = -\log(1-\psi) + \int_0^1 \bigg( \frac{1}{\sqrt{2U(y)}} - \frac{1}{1-y} \bigg)\ud y + O(1-\psi)
=-\log\Big(\frac{1-\psi}{\kappa}\Big) + O(1-\psi),
\end{equation}
where $\kappa$ is defined in \eqref{eq:k-def}.
Let $1 - H(x) = \kappa\eee^{-z}$. We obtain
\begin{equation}
z  - C\eee^{-z} \leq x \leq z + C\eee^{-z}.
\end{equation}
This implies in particular $|z - x| \lesssim 1$, and once we know this we get
\begin{equation}
|z - x| \lesssim \eee^{-z} \lesssim \eee^{- x},
\end{equation}
which implies
\begin{equation}
|1 - H(x) - \kappa\eee^{-x}| = \kappa \eee^{-x}|\eee^{x-z} - 1|
\lesssim \eee^{-x}\eee^{-x} = \eee^{-2x}.
\end{equation}
The bound for $\partial_x H(x)$ is obtained from \eqref{eq:bogom} and the fact that $\sqrt{2U(\psi)}
= (1-\psi) + O((1-\psi)^2)$. The bound for $\partial_x^2 H(x)$
is obtained from \eqref{eq:psi4} and the fact that $U'(\psi) = (1-\psi) + O((1-\psi)^2)$.
\end{proof}
We now compute two constants which appear in the proof. We claim that
\begin{align}
\|\partial_x H\|_{L^2}^2 = 2\int_0^1\sqrt{2U(y)}\ud y, \qquad
\label{eq:reduced-force}
\int_\bR \partial_x H(x)\left(U''(H(x)) - U''(1)\right)\eee^{x} \ud x = -2 \kappa.
\end{align}
The first formula follows from \eqref{eq:bogom} and a change of variable $y = H(x)$.
The second formula follows from
\begin{equation}
\begin{aligned}
\int_{-\infty}^R (\partial_x H) U''(H(x))\eee^{x} \ud x &= \int_{-\infty}^R \partial_x^3H(x)\eee^{x}\ud x \\
&= \eee^{R}(\partial_x^2 H(R) - \partial_x H(R)) + \int_{-\infty}^R \partial_x H(x) \eee^{x}\ud x,
\end{aligned}
\end{equation}
thus
\begin{equation}
\int_\bR \partial_x H(U''(H(x)) - U''(1))\eee^{x} \ud x = \lim_{R\to \infty} \eee^{R}(\partial_x^2 H(R) - \partial_x H(R)).
\end{equation}
\subsection{Coercivity}
\label{ssec:coer-kink}
Let $\phi: \bR \to \bR$ be a state such that $E_p(\phi) < \infty$, $\lim_{x \to -\infty}\phi(x) = -1$
and $\lim_{x \to +\infty}\phi(x) = 1$.
We have the classical Bogomolny coercivity:
\begin{equation}
\label{eq:bogom-coer}
\begin{aligned}
E_p(\phi) = \int_\bR\Big(\frac 12 (\partial_x \phi)^2 + U(\phi)\Big)\ud x
&= \int_\bR \sqrt{2U(\phi)}\partial_x \phi\ud x + \frac 12 \int_\bR (\partial_x \phi - \sqrt{2U(\phi)})^2\ud x \\
&= \int_{-1}^1 \sqrt{2U(y)}\ud y + \frac 12 \int_\bR (\partial_x \phi - \sqrt{2U(\phi)})^2\ud x.
\end{aligned}
\end{equation}
In particular,
\begin{equation}
\label{eq:V-of-H}
E_p(H) = \int_{-1}^1 \sqrt{2U(y)}\ud y,\qquad E_p(\phi) \geq E_p(H).
\end{equation}

We define
\begin{equation}
L := \vD^2 E_p(H) = -\partial_x^2 + U''(H) = -\partial_x^2 + 1 + (U''(H) - 1).
\end{equation}
Observe that $U''(H) - 1$ is an exponentially decaying $C^\infty$ function.
Differentiating $\partial_x^2 H(x - a) = U'(H(x-a))$ with respect to $a$ we obtain
\begin{equation}
\label{eq:Lc-ker}
\big({-}\partial_x^2 + U''(H(\cdot - a))\big)\partial_x H(\cdot - a) = 0,
\end{equation}
in particular for $a = 0$ we have $L(\partial_x H) = 0$.
Differentiating \eqref{eq:Lc-ker} with respect to $a$ at $a = 0$ we obtain
\begin{equation}
\label{eq:U'''-identity}
L(\partial_x^2 H) = -U'''(H)(\partial_x H)^2.
\end{equation}
\begin{proposition}
\label{prop:L-index}
The operator $L$ is self-adjoint with domain $H^2(\bR)$,
$\tx{spec}(L) \subset \{0\} \cup [\lambda, +\infty)$ for some $\lambda > 0$ and
$\ker L = \spn(\partial_x H)$.
\end{proposition}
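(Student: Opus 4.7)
My plan is to combine standard Schr\"odinger-operator theory with the positivity of $\partial_x H$ that comes for free from the Bogomolny equation \eqref{eq:bogom}. Write $L = (-\partial_x^2 + 1) + W$ with $W(x) := U''(H(x)) - 1$. By Proposition~\ref{prop:prop-H} and smoothness of $U$, $W$ is smooth, bounded, and decays exponentially as $|x| \to \infty$, so $L$ is self-adjoint on $H^2(\bR)$ as a bounded self-adjoint perturbation of $-\partial_x^2 + 1$. Since $W \to 0$ at infinity, multiplication by $W$ is relatively compact with respect to $-\partial_x^2$ (for instance via Birman--Schwinger), so Weyl's theorem gives
\[
\sigma_{\tx{ess}}(L) = \sigma_{\tx{ess}}(-\partial_x^2 + 1) = [1, +\infty).
\]
In particular everything in $\sigma(L) \cap (-\infty, 1)$ consists of isolated eigenvalues of finite multiplicity.

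Next, identity \eqref{eq:Lc-ker} at $a = 0$ reads $L(\partial_x H) = 0$, and the exponential decay in Proposition~\ref{prop:prop-H} places $\partial_x H \in H^2(\bR)$, so $0$ is an eigenvalue with eigenfunction $\partial_x H$. To show $\ker L$ is one-dimensional, I observe that $Lu = 0$ is a second-order linear ODE whose solutions span a two-dimensional space; the Wronskian of any two linearly independent solutions is a nonzero constant, which is incompatible with both solutions lying in $L^2(\bR)$ (their Wronskian would decay at infinity). Hence $\ker L = \spn(\partial_x H)$.

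It remains to show $\inf \sigma(L) = 0$; after that the gap assertion follows immediately, since the next point of $\sigma(L)$ above $0$ is either a discrete eigenvalue in $(0, 1)$ or $1$ itself, and calling that point $\lambda$ gives $\sigma(L) \subset \{0\} \cup [\lambda, \infty)$. From \eqref{eq:bogom} we have $\partial_x H = \sqrt{2U(H)} > 0$ everywhere, so $\partial_x H$ is a strictly positive eigenfunction---a Perron--Frobenius situation. Suppose for contradiction that $\mu_0 := \inf \sigma(L) < 0$. Then $\mu_0 < 1$ is attained as an eigenvalue, and the direct method applied to the quadratic form $\la L\psi, \psi\ra$ over $\{\psi \in H^1 : \|\psi\|_{L^2} = 1\}$, combined with $|\psi| \in H^1$ having the same Dirichlet energy as $\psi$, yields a nonnegative minimizer $\psi_0 \geq 0$. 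Elliptic regularity makes $\psi_0$ smooth, and ODE uniqueness for $-\psi_0'' + (U''(H) - \mu_0)\psi_0 = 0$ forces $\psi_0 > 0$ everywhere (a zero of a nonnegative smooth solution would force $\psi_0 \equiv 0$). But then $\psi_0$ and $\partial_x H$ would be two strictly positive eigenfunctions with different eigenvalues $\mu_0 \neq 0$ and hence $L^2$-orthogonal, which is impossible. Thus $\mu_0 \geq 0$, and combined with $0 \in \sigma(L)$ we conclude $\mu_0 = 0$.

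The only delicate step is the Perron--Frobenius argument, which is where the positivity of the explicit zero mode $\partial_x H$ (a reflection of the Bogomolny structure from Section~\ref{ssec:stationary}) is crucially used to place $0$ at the \emph{bottom} of the spectrum rather than somewhere in its interior. Every other step---self-adjointness, essential spectrum via Weyl, and kernel dimension via the Wronskian---is routine and follows directly from the asymptotics already established in Proposition~\ref{prop:prop-H}.
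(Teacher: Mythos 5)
Your proof is correct, and it is essentially the argument the paper has in mind: the paper's proof is a one-line appeal to standard Sturm--Liouville theory together with the positivity of $\partial_x H$, which is exactly the fact your variational Perron--Frobenius step exploits to place $0$ at the bottom of the spectrum. The remaining ingredients you supply (bounded-perturbation self-adjointness, Weyl's theorem giving $\sigma_{\mathrm{ess}}(L) = [1,\infty)$, and the Wronskian argument for simplicity of the kernel, which implicitly uses the exponential decay of $L^2$ eigenfunctions below the essential spectrum) are the standard details behind that citation.
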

\begin{proof}
This is a standard consequence of the Sturm-Liouville theory and the fact
that $\partial_x H(x) > 0$ for all $x \in \bR$.
\end{proof}

\begin{lemma}
There exists $c > 0$ such that for all $v \in H^1(\bR)$ the following inequality holds:
\begin{equation}
\label{eq:vLv-coer}
\la v, Lv\ra \geq c \|v\|_{H^1}^2 -\lambda \|\partial_x H\|^{-2}_{L^2} \la \partial_x H, v\ra^2.
\end{equation}
\end{lemma}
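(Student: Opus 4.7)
The plan is to use the spectral structure of $L$ described in Proposition~\ref{prop:L-index} combined with the standard convex combination trick that upgrades an $L^2$-type coercivity to an $H^1$-type coercivity.

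First I would establish the $L^2$ version of the inequality. Given $v \in H^1(\bR)$, decompose
\begin{equation}
v = \alpha \partial_x H + w, \qquad \alpha = \|\partial_x H\|_{L^2}^{-2}\la \partial_x H, v\ra, \qquad \la \partial_x H, w\ra = 0.
\end{equation}
Since $L(\partial_x H) = 0$, the cross terms vanish and $\la v, Lv\ra = \la w, Lw\ra$. Because $w$ is orthogonal to $\ker L = \spn(\partial_x H)$ and $\spec(L) \subset \{0\} \cup [\lambda, \infty)$, the spectral theorem yields $\la w, Lw\ra \geq \lambda \|w\|_{L^2}^2$. The Pythagorean identity gives $\|w\|_{L^2}^2 = \|v\|_{L^2}^2 - \|\partial_x H\|_{L^2}^{-2}\la \partial_x H, v\ra^2$, so
\begin{equation}
\label{eq:L2-coer-plan}
\la v, Lv\ra \geq \lambda \|v\|_{L^2}^2 - \lambda \|\partial_x H\|_{L^2}^{-2}\la \partial_x H, v\ra^2.
\end{equation}

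Next I would upgrade to $H^1$ by writing the Dirichlet form
\begin{equation}
\la v, Lv\ra = \|\partial_x v\|_{L^2}^2 + \la v, U''(H) v\ra \geq \|\partial_x v\|_{L^2}^2 - M\|v\|_{L^2}^2,
\end{equation}
where $M := \|U''(H)\|_{L^\infty} < \infty$ (which is finite since $U \in C^\infty$ and $H$ is bounded). For $\theta \in (0,1)$ to be chosen, take a convex combination of this bound and \eqref{eq:L2-coer-plan}:
\begin{equation}
\la v, Lv\ra \geq \theta \|\partial_x v\|_{L^2}^2 + \bigl[(1-\theta)\lambda - \theta M\bigr]\|v\|_{L^2}^2 - (1-\theta)\lambda\|\partial_x H\|_{L^2}^{-2}\la \partial_x H, v\ra^2.
\end{equation}
Choosing $\theta \in (0, \lambda/(\lambda+M))$ makes both coefficients of $\|\partial_x v\|_{L^2}^2$ and $\|v\|_{L^2}^2$ strictly positive, giving a lower bound of $c\|v\|_{H^1}^2$ with $c := \min(\theta, (1-\theta)\lambda - \theta M) > 0$. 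Since $(1-\theta)\lambda \leq \lambda$, the coefficient $-(1-\theta)\lambda$ on the last term is bounded below by $-\lambda$, which yields exactly the stated inequality.

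There is no real obstacle here; this is a routine consequence of Proposition~\ref{prop:L-index} and the fact that $U''(H)$ is a bounded perturbation of the identity at spatial infinity. The only mild point to be careful about is ensuring the precise constant $\lambda$ appears in front of $\la \partial_x H, v\ra^2$, which is why one must keep the coefficient from \eqref{eq:L2-coer-plan} at $\lambda$ and then note that the convex combination can only decrease it, so using $-\lambda$ as the final lower bound remains valid.
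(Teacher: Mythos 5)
Your proof is correct and takes essentially the same route as the paper's: the spectral $L^2$-coercivity furnished by Proposition~\ref{prop:L-index} is combined, via a small convex weight, with the quadratic-form identity $\la v, Lv\ra = \|\partial_x v\|_{L^2}^2 + \la v, U''(H)v\ra$ and the boundedness of $U''(H)$. The only difference is cosmetic: you spell out the orthogonal decomposition $v = \alpha\,\partial_x H + w$ to justify the $L^2$ bound, which the paper simply quotes from Proposition~\ref{prop:L-index}.
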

\begin{proof}
By the definition of $L$ we have
\begin{equation}
\label{eq:vLv-1}
\la v, Lv\ra = \|v\|_{H^1}^2 - \int_\bR (1-U''(H))v^2\ud x,
\end{equation}
and by Proposition~\ref{prop:L-index} we have
\[
\la v, Lv\ra\geq \lambda \|v\|^2_{L^2}-\lambda \|\partial_x H\|^{-2}_{L^2}\la \partial_x H, v\ra^2.
\]
This implies
\begin{equation}
\label{eq:vLv-2}
\la v, Lv\ra -c \|v\|^2_{H^1}\geq (1-c)\lambda \|v\|_{L^2}^2-c\int_\bR (1 - U''(H))v^2\ud x - (1-c)\lambda \|\partial_x H\|^{-2}_{L^2} \la \partial_x H, v\ra^2.
\end{equation}
Since $1 - U''(H)$ is a bounded function, \eqref{eq:vLv-coer}  follows by taking $c$ small.
\end{proof}

%
Given $X = (x_1, x_2)$ with $x_1 < x_2$, we denote
\begin{equation}
H_j(x) := H(x - x_j), \qquad L_X := \vD^2 E_p(1 - H_1 + H_2) = -\partial_x^2 + U''(1 - H_1 + H_2).
\end{equation}
\begin{lemma}
\label{lem:D2H}
There exist $\lambda_0, z_0 > 0$ such that for all
$X=(x_1, x_2)$ with $x_2 - x_1 \geq z_0$ and $v \in H^1(\bR)$
\begin{equation}
\label{eq:D2H-coer}
\begin{aligned}
\la v, L_X v\ra \geq \lambda_0 \|v\|_{H^1}^2 -{ \lambda \|\partial_x H\|^{-2}_{L^2}} \big(\la\partial_x H_1, v\ra^2 + \la\partial_x H_2, v\ra^2\big).
\end{aligned}
\end{equation}
\end{lemma}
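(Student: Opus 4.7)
The plan is a standard IMS-type localization argument that reduces the two-kink coercivity to the single-kink bound \eqref{eq:vLv-coer}. Set $m := \tfrac12(x_1 + x_2)$ and $R := \tfrac14(x_2 - x_1)$, and take smooth functions $\chi_1, \chi_2$ with $\chi_1^2 + \chi_2^2 \equiv 1$, $\chi_1 \equiv 1$ on $(-\infty, m - R]$, $\chi_2 \equiv 1$ on $[m + R, \infty)$, and $|\partial_x \chi_j| \lesssim R^{-1}$. Such a partition can be built as $(\chi_1, \chi_2) = (\cos \tfrac{\pi}{2}\eta, \sin \tfrac{\pi}{2}\eta)$ with $\eta$ an increasing smooth cutoff transitioning from $0$ to $1$ on $[m-R, m+R]$, obtained from the cutoff $\chi$ in the notation section.

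A direct computation using $\chi_1^2 + \chi_2^2 \equiv 1$ and integration by parts gives the IMS identity
\[
\la v, L_X v\ra = \sum_{j=1}^{2} \la \chi_j v, L_X(\chi_j v)\ra - \int_\bR \big((\partial_x \chi_1)^2 + (\partial_x \chi_2)^2\big) v^2\, \ud x,
\]
together with $\sum_j \|\chi_j v\|_{H^1}^2 \geq \|v\|_{H^1}^2$ (the same algebraic identity applied to $-\partial_x^2$). On $\supp \chi_1$ we have $x - x_2 \leq -R$, so Proposition~\ref{prop:prop-H} gives $|H_2(x)+1| \lesssim \eee^{-R}$. Since $U$ is even, $U''$ is even, and a Taylor expansion yields
\[
U''(1 - H_1 + H_2) = U''({-}H_1 + (1+H_2)) = U''(H_1) + O(\eee^{-R})
\]
uniformly on $\supp \chi_1$, with the symmetric statement on $\supp \chi_2$. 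Setting $L_{x_j} := -\partial_x^2 + U''(H_j)$ and translating by $x_j$ to apply \eqref{eq:vLv-coer} gives
\[
\la \chi_j v, L_X(\chi_j v)\ra \geq c\|\chi_j v\|_{H^1}^2 - \lambda \|\partial_x H\|_{L^2}^{-2} \la \partial_x H_j, \chi_j v\ra^2 - C\eee^{-R}\|v\|_{L^2}^2.
\]

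The last step restores orthogonality against $v$ rather than $\chi_j v$: since $\partial_x H_j$ decays exponentially away from $x_j$ and $1 - \chi_j$ is supported beyond distance $R$ from $x_j$, we have $\|(1-\chi_j)\partial_x H_j\|_{L^2} \lesssim \eee^{-R}$, so that for any $\delta > 0$,
\[
\la \partial_x H_j, \chi_j v\ra^2 \leq (1+\delta)\la \partial_x H_j, v\ra^2 + C_\delta \eee^{-2R}\|v\|_{L^2}^2.
\]
Assembling the estimates, choosing $\delta$ small and then $z_0$ large enough so that the accumulated error of order $(R^{-2} + \eee^{-R})\|v\|_{L^2}^2$ is absorbed into $\tfrac{c}{2}\|v\|_{H^1}^2$, produces \eqref{eq:D2H-coer}. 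There is no conceptual obstacle; the only delicate point is matching the orthogonality coefficient to $\lambda$ exactly, which can be arranged by applying \eqref{eq:vLv-coer} with a slightly smaller constant $\lambda' < \lambda$ (permitted by the spectral gap in Proposition~\ref{prop:L-index}) and choosing $\delta$ so that $(1+\delta)\lambda' \leq \lambda$.
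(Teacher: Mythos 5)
Your proof is correct and follows essentially the same strategy as the paper: localize near each kink, reduce to the single-kink coercivity \eqref{eq:vLv-coer}, and absorb the errors using the large separation $x_2-x_1\geq z_0$. The only differences are cosmetic — you use an IMS partition $\chi_1^2+\chi_2^2=1$ (which removes the cross term at the cost of an $O(R^{-2})\|v\|_{L^2}^2$ localization error) where the paper uses $\chi_1+\chi_2=1$ and bounds the cross term $\la v_1, L_X v_2\ra$ by positivity up to $o(1)$, and you restore the orthogonality term via a $(1+\delta)$ Cauchy--Schwarz with a slightly reduced spectral constant $\lambda'<\lambda$, where the paper bounds $\big|\la\partial_x H_j, v_j\ra^2-\la\partial_x H_j, v\ra^2\big|$ directly by $o(1)\|v\|_{L^2}^2$.
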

\begin{proof}

We set
\begin{align}
\chi_1(x) &:= \chi\Big(\frac{x-x_1}{x_2 - x_1}\Big), \\
\chi_2(x) &:= 1 - \chi_1(x), \\
v_1 &:= \chi_1 v, \\
v_2 &:= \chi_2 v.
\end{align}
We have
\begin{equation}
\la v, L_X v\ra = \la v_1, L_X v_1\ra + \la v_2, L_X v_2\ra + 2\la v_1, L_X v_2\ra,
\end{equation}
so it suffices to prove that
\begin{gather}
\label{eq:coer-1}
\la v_j, L_X v_j\ra \geq {c}\|v_j\|_{H^1}^2 - { \lambda \|\partial_x H\|^{-2}_{L^2}}\la\partial_x H_j, v_j\ra^2
-{o(1)\|v\|_{H^1}^2}, \\
\label{eq:coer-2}
\la v_1, L_X v_2\ra {\geq o(1)\|v\|_{H^1}^2}, \\
\label{eq:coer-3}
\big|\la\partial_x H_j, v_j\ra^2 - \la \partial_x H_j, v\ra^2\big| {\leq o(1)\|v\|_{H^1}^2},
\end{gather}
{where $c>0$ is the constant in \eqref{eq:vLv-coer} and $o(1)\to 0$ as $z_0\to \infty$.}

%
We  prove \eqref{eq:coer-1} for $j = 1$ (the proof for $j = 2$ is similar).
Without loss of generality we can assume $x_1 = 0$, so that $x_2 \geq z_0$. We then have
\begin{equation}
L_X = L + V, \qquad V := U''(1 - H + H_2) - U''(H).
\end{equation}
We thus obtain
\begin{equation}
\la v_1, L_X v_1\ra = \la v_1, L v_1\ra + \la v_1, Vv_1\ra \geq c \|v_1\|_{H^1}^2 - \lambda \|\partial_x H\|^{-2}_{L^2}\la \partial_x H, v_1\ra^2
+ \int_{\bR} \chi_1^2 V v^2\ud x.
\end{equation}
Note that $\|\chi_1^2 V\|_{L^\infty} \ll 1$. Indeed, if $x \geq \frac 23 x_2$ then $\chi_1(x) = 0$.
If $x \leq \frac 23 x_2$, then $|1 + H_2| \ll 1$ which implies $|V(x)| \ll 1$. This proves \eqref{eq:coer-1}.

Next, we show \eqref{eq:coer-2}. Using the fact that $\|\partial_x \chi_j\|_{L^\infty} \ll 1$ we obtain
\[
\begin{aligned}
\la v_1, L_X v_2\ra &= \int_{\bR} \partial_x(\chi_1 v)\partial_x(\chi_2 v)\ud x +V \chi_1\chi_2 v^2\\
&= \int_{\bR}\chi_1\chi_2 (\partial_x v)^2\ud x + o(1)\|v\|_{H^1}^2\geq o(1)\|v\|_{H^1}^2,
\end{aligned}
\]
as the first term in the second line is positive.

Finally, the bound \eqref{eq:coer-3} follows from
\begin{equation}
\begin{aligned}
\big|\la\partial_x H_1, v_1\ra^2 - \la \partial_x H_1, v\ra^2\big| &\lesssim (\|v_1\|_{L^2} + \|v\|_{L^2})
\big|\la\partial_x H_1, v_1 - v\ra\big| \\ &\lesssim \|v\|_{L^2}^2\|\chi_2\partial_x H_1\|_{L^2} \leq o(1)\|v\|_{L^2}^2,
\end{aligned}
\end{equation}
and similarly for $j = 2$.

\end{proof}

\subsection{Interaction of the kinks}
\label{ssec:interactions}
Note that $|1 - H_1| \lesssim \eee^{-(x - x_1)_+}$ and $|1 + H_2| \lesssim \eee^{-(x_2 - x)_+}$.
The following lemma is often useful while estimating interactions.
\begin{lemma}
\label{lem:exp-cross-term}
For any $x_1 < x_2$ and $\alpha, \beta > 0$ with $\alpha \neq \beta$ the following bound holds:
\begin{equation}
\int_{\bR}\eee^{-\alpha(x - x_1)_+}\eee^{-\beta(x_2 - x)_+}\ud x \lesssim_{\alpha, \beta} \eee^{-\min(\alpha, \beta)(x_2 - x_1)}, \qquad \forall x_1, x_2 \in \bR.
\end{equation}
For any $\alpha > 0$, the following bound holds:
\begin{equation}
\int_{\bR}\eee^{-\alpha(x - x_1)_+}\eee^{-\alpha(x_2 - x)_+}\ud x \lesssim_{\alpha} (x_2 - x_1)\eee^{-\alpha(x_2 - x_1)}, \qquad \forall x_1, x_2 \in \bR.
\end{equation}
\end{lemma}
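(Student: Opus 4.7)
The plan is to split the integration domain at the corner points $x_1$ and $x_2$ of the piecewise-linear exponents $(x-x_1)_+$ and $(x_2-x)_+$. Assuming without loss of generality $x_1 < x_2$ (the degenerate case $x_2 \leq x_1$ is a trivial direct estimate relevant only for the first bound, where the right-hand side is $\geq 1$ while the left-hand side grows at most linearly in $|x_1 - x_2|$), the integral decomposes into three elementary pieces, each of which involves at most a single exponential.

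On the two outer intervals one positive part vanishes and one computes directly
\[
\int_{-\infty}^{x_1}\eee^{-\beta(x_2-x)}\ud x = \tfrac{1}{\beta}\eee^{-\beta(x_2-x_1)}, \qquad \int_{x_2}^{\infty}\eee^{-\alpha(x-x_1)}\ud x = \tfrac{1}{\alpha}\eee^{-\alpha(x_2-x_1)},
\]
both bounded by a constant depending only on $\alpha,\beta$ times $\eee^{-\min(\alpha,\beta)(x_2-x_1)}$. On the middle interval, the substitution $s = x - x_1$ together with $d := x_2 - x_1$ reduces the integral to $\eee^{-\beta d}\int_0^d \eee^{-(\alpha-\beta)s}\ud s$. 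When $\alpha \neq \beta$ the inner integral equals $(1 - \eee^{-(\alpha-\beta)d})/(\alpha-\beta)$, so the full middle contribution evaluates to $(\eee^{-\min(\alpha,\beta)d} - \eee^{-\max(\alpha,\beta)d})/|\alpha - \beta|$, which is uniformly $\lesssim_{\alpha,\beta}\eee^{-\min(\alpha,\beta)d}$ and delivers the first bound. When $\alpha = \beta$ the inner integral is exactly $d$, contributing $d\,\eee^{-\alpha d}$; the two outer pieces contribute $\tfrac{2}{\alpha}\eee^{-\alpha d}$, which is absorbed into $\lesssim_\alpha d\,\eee^{-\alpha d}$ once $d$ is bounded below away from zero, and both sides are $O(1)$ for $d$ near zero, so the second bound follows.

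There is no genuine obstacle here; the lemma is a routine exponential integration. The only structural point worth emphasizing is the origin of the extra factor $(x_2 - x_1)$ in the second estimate: it comes entirely from the middle interval, where matching exponential rates render the integrand $\eee^{-\alpha(x_2-x_1)}$ constant in $x$, producing a linear growth in the length of that interval. In the first bound, by contrast, the denominator $|\alpha - \beta|$ absorbs the constant in $x$ contribution and leaves only pure exponential decay at the slower of the two rates.
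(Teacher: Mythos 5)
Your computation is correct and is precisely the ``straightforward computation'' that the paper leaves to the reader: split the line at $x_1$ and $x_2$, evaluate the two outer exponential integrals and the middle one, and observe that the factor $x_2-x_1$ in the equal-rate case comes from the middle interval where the integrand is constantly $\eee^{-\alpha(x_2-x_1)}$. The one loose point is your remark that ``both sides are $O(1)$ for $d$ near zero'': this does not actually salvage the second inequality for small $d=x_2-x_1$, since there the right-hand side $d\,\eee^{-\alpha d}$ tends to $0$ while your exact evaluation gives a left-hand side bounded below by $2/\alpha$; but this is an imprecision in the lemma's statement (which nominally allows all $x_1,x_2$) rather than in your argument, as the paper only ever applies the equal-rate bound with $x_2-x_1\geq 1$, where your computation $(d+2/\alpha)\eee^{-\alpha d}\lesssim_\alpha d\,\eee^{-\alpha d}$ is exactly what is needed.
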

\begin{proof}
Straightforward computation.
\end{proof}
To measure the interaction between the kinks located at  $x_1, x_2 \in \bR$, we introduce a function $\Phi\in C^\infty(\bR^3)$ by 
\begin{equation} \label{eq:Phi-def} 
 \Phi(x_1, x_2, x) := -U'(1 - H_1(x) + H_2(x)) - U'(H_1(x)) + U'(H_2(x)).
\end{equation}
Observe that
\begin{equation}
\label{eq:DEp-ansatz}
\vD E_p(1 - H_1 + H_2) = -\partial_x^2(1 - H_1 + H_2) + U'(1 - H_1 + H_2) = -\Phi(x_1, x_2, \cdot).
\end{equation}
\begin{lemma}
\label{lem:inter-bound}
There exists $C > 0$ (depending only on $U$) such that for all $x_1, x_2, x \in \bR$ with $x_2 - x_1 \geq 1$
the following inequalities are true for all $j \in \{1, 2\}$:
\begin{align}
\label{eq:inter-bound-1} |\Phi(x_1, x_2, x)| + |\partial_{x_j}\Phi(x_1, x_2, x)| 
&\leq C\eee^{-(x - x_1)_+}\eee^{-(x_2 - x)_+}, \\
\label{eq:inter-bound-2} \big|\big(U''(1-H_1(x) + H_2(x)) - U''(H_j(x))\big)\partial_x H_j(x)\big| &\leq C\eee^{-(x - x_1)_+}\eee^{-(x_2 - x)_+}, \\
\label{eq:inter-bound-3} \big|\partial_x H_1(x)\partial_x H_2(x)| &\leq C\eee^{-(x - x_1)_+}\eee^{-(x_2 - x)_+}.
\end{align}
\end{lemma}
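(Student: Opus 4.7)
The plan is to reduce bounds (1) and (2) to a common computation, then derive (3) directly and (2) together with the $|\Phi|$-part of (1) via a midpoint split and Taylor expansion.

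A short calculation using $\partial_{x_j}H_j = -\partial_x H_j$ gives $\partial_{x_1}\Phi = -(U''(1-H_1+H_2)-U''(H_1))\partial_x H_1$ and $\partial_{x_2}\Phi = (U''(1-H_1+H_2)-U''(H_2))\partial_x H_2$, so the derivative part of (1) is literally the content of (2) (with the sign absorbed in the absolute value). Bound (3) is immediate from Proposition~\ref{prop:prop-H}, which gives $\partial_x H(y) \lesssim e^{-|y|}$ on all of $\bR$: the product $\partial_x H_1(x)\,\partial_x H_2(x) \lesssim e^{-|x-x_1|-|x-x_2|}$, and a three-region inspection ($x\leq x_1$, $x_1\leq x\leq x_2$, $x\geq x_2$) turns this into $e^{-(x-x_1)_+}e^{-(x_2-x)_+}$, using $x_1<x_2$.

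For the pointwise bound on $|\Phi|$ and for (2), I would split $\bR$ at $m := (x_1+x_2)/2$ and exploit the symmetry under swap of kink and antikink. On $\{x\leq m\}$, $x_2-x \geq \tfrac12(x_2-x_1)\geq \tfrac12$, so Proposition~\ref{prop:prop-H} yields $H_2(x) = -1+\epsilon_2(x)$ with $|\epsilon_2(x)|\lesssim e^{-(x_2-x)}$; writing $1-H_1+H_2 = -H_1+\epsilon_2$ allows a Taylor expansion of $U'$ in $\epsilon_2$ around $-H_1$. Using that $U'$ is odd, $U''$ is even, $U'(\pm1)=0$, and $U''(\pm1)=1$, one obtains
\[ \Phi(x_1,x_2,x) = (1-U''(H_1(x)))\,\epsilon_2(x) + O(\epsilon_2(x)^2). \]
Combining $|1-U''(H_1(x))|\lesssim e^{-|x-x_1|}$ (Taylor expansion of $U''$ near $\pm1$, together with Proposition~\ref{prop:prop-H}) with $|\epsilon_2|\lesssim e^{-(x_2-x)}$ and the subcases $x\leq x_1$, $x_1\leq x\leq m$ yields the required bound on $|\Phi|$; the remainder $O(\epsilon_2^2)$ is absorbed since $|\epsilon_2|^2\leq|\epsilon_2|$ on $\{x\leq x_1\}$ and $|\epsilon_2|^2\lesssim e^{-(x_2-x_1)}$ on $\{x_1\leq x\leq m\}$.

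For (2) on the same region, evenness of $U''$ gives $U''(-H_1)-U''(H_1)=0$, so the same Taylor expansion reduces $U''(1-H_1+H_2)-U''(H_1)$ to an $O(\epsilon_2)$ quantity, and multiplication by $|\partial_x H_1|\lesssim e^{-|x-x_1|}$ closes the $j=1$ case. For $j=2$, the mean value theorem applied to the arguments $1-H_1+H_2$ and $H_2$, which differ by $1-H_1$, gives $|U''(1-H_1+H_2)-U''(H_2)|\lesssim \min(1,e^{-(x-x_1)_+})$, and the bound follows upon multiplication by $|\partial_x H_2|\lesssim e^{-(x_2-x)}$. The region $\{x\geq m\}$ is treated symmetrically by writing $H_1 = 1-\epsilon_1$ with $|\epsilon_1|\lesssim e^{-(x-x_1)}$. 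The main obstacle is not a single hard step but the careful bookkeeping across regional subcases: one must verify that the elementary exponential factors from Proposition~\ref{prop:prop-H} assemble into the target form $e^{-(x-x_1)_+}e^{-(x_2-x)_+}$ in each region. The structural ingredient making the proof work is the evenness of $U$, which forces the leading $O(1)$ terms in the Taylor expansions at $\pm1$ to cancel so that only the genuinely small cross-region contributions remain.
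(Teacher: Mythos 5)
Your proof is correct, but it takes a noticeably different route from the paper's. You share the same preliminary reductions: the identities $\partial_{x_1}\Phi = -(U''(1-H_1+H_2)-U''(H_1))\partial_x H_1$ and $\partial_{x_2}\Phi = (U''(1-H_1+H_2)-U''(H_2))\partial_x H_2$, so that the derivative half of \eqref{eq:inter-bound-1} coincides with \eqref{eq:inter-bound-2}, and the directness of \eqref{eq:inter-bound-3}. You then extract the bilinear smallness by a midpoint split at $m=(x_1+x_2)/2$, Taylor expansions of $U'$ and $U''$ around the far vacuum with explicit errors $\epsilon_j$, and the kink--antikink swap symmetry. The paper instead introduces the auxiliary function $f(w_1,w_2)=-U'(1-w_1+w_2)-U'(w_1)+U'(w_2)$ on $[-1,1]^2$: local Lipschitz continuity of $U''$ (together with the evenness of $U''$, used tacitly there, exactly where you use it explicitly) gives $|\partial_{w_1}f|\lesssim|1+w_2|$ and $|\partial_{w_2}f|\lesssim|1-w_1|$, and the bound $|f|\lesssim|(1-w_1)(1+w_2)|$ then follows from the fundamental theorem of calculus using $f(1,w_2)=0$; composing with $|1-H_1|+|\partial_x H_1|\lesssim \eee^{-(x-x_1)_+}$ and $|1+H_2|+|\partial_x H_2|\lesssim \eee^{-(x_2-x)_+}$ yields all three bounds with no regional case analysis. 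Your route costs extra bookkeeping (converting the $\eee^{-|x-x_j|}$ factors and the quadratic remainders into $\eee^{-(x-x_1)_+}\eee^{-(x_2-x)_+}$ region by region, and the somewhat glossed symmetric treatment of $\{x\ge m\}$, where for instance the $j=1$ case of \eqref{eq:inter-bound-2} needs the observation $(x_2-x)_+\le x-x_1$ there), but it buys the explicit leading-order structure $\Phi\simeq(1-U''(H_1))\epsilon_2$, which is essentially the quantity the paper only extracts later when computing the interaction force in Lemma~\ref{lem:Fz}. Note also that the paper's proof records the additional bound $|\partial_{w_2}^2 f|\lesssim|1-w_1|$ for later use in Lemma~\ref{lem:Fz}; your argument, which does not need it for the present statement, would have to supply it separately at that point.
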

\begin{proof}
For $w_1, w_2 \in \bR$ set
\begin{equation}
f(w_1, w_2) := -U'(1-w_1 + w_2) - U'(w_1) + U'(w_2).
\end{equation}
Then $\Phi(x_1, x_2, x) = f(H_1(x), H_2(x))$ so, by the Chain Rule and the fact that $\partial_{x_j}H_j = -\partial_x H_j$,
\begin{align}
\label{eq:deriv-xj-Phi}
\partial_{x_j}\Phi(x_1, x_2, x) &= -\partial_x H_j(x)\partial_{w_j}f(H_1(x), H_2(x)), \\
\partial_{x_j}^2\Phi(x_1, x_2, x) &= \partial_x^2 H_j(x)\partial_{w_j}f(H_1(x), H_2(x)) + (\partial_x H_j(x))^2\partial_{w_j}^2 f(H_1(x), H_2(x)), \\
\partial_{x_1}\partial_{x_2}\Phi(x_1, x_2, x) &= \partial_x H_1(x)\partial_x H_2(x)\partial_{w_1}\partial_{w_2}f(H_1(x), H_2(x)).
\end{align}
Since $|1 - H_1| + |\partial_x H_1| + |\partial_x^2 H_1| \lesssim \eee^{-(x - x_1)_+}$ and $|1 + H_2| + |\partial_x H_2| + |\partial_x^2 H_2| \lesssim \eee^{-(x_2 - x)_+}$,
in order to prove \eqref{eq:inter-bound-1} it suffices to check that for $-1 \leq w_1, w_2 \leq 1$
\begin{align}
\label{eq:fw1w2-1} |f(w_1, w_2)| &\lesssim |(1-w_1)(1+w_2)|, \\
\label{eq:fw1w2-2} |\partial_{w_1}f(w_1, w_2)| &\lesssim |1 + w_2|, \\
\label{eq:fw1w2-3} |\partial_{w_2}f(w_1, w_2)| &\lesssim |1 - w_1|, \\
\label{eq:fw1w2-5} |\partial_{w_2}^2f(w_1, w_2)| &\lesssim |1 - w_1|.
\end{align}
We have $\partial_{w_1}f(w_1, w_2) = U''(1 - w_1 + w_2) - U''(w_1) = U''(1 - w_1 + w_2) - U''(w_1)$.
Since $U''$ is locally Lipschitz, \eqref{eq:fw1w2-2} follows. The bound \eqref{eq:fw1w2-3} is similar,
and \eqref{eq:fw1w2-5} are proved similarly, using that fact that $U'''$ is locally Lipschitz.
Finally, in order to prove \eqref{eq:fw1w2-1} we notice that
\begin{equation}
f(w_1, w_2) = f(1, w_2) - \int_{w_1}^1 \partial_{w_1}f(w, w_2)\ud w = - \int_{w_1}^1 \partial_{w_1}f(w, w_2)\ud w
\end{equation}
and we conclude using \eqref{eq:fw1w2-2}.

The left hand side in \eqref{eq:inter-bound-2} is $|\partial_{x_j}\Phi(x_1, x_2, x)|$,
so the estimate follows from \eqref{eq:inter-bound-1}. The bound \eqref{eq:inter-bound-3} is clear.
\end{proof}

We will often denote $z = x_2 - x_1$ the distance between the kinks. We introduce the following function,
which is the (renormalised) formally computed attraction force between a kink and an~antikink at distance $z$:
\begin{equation}
\label{eq:force-def}
F(z) := \|\partial_x H\|_{L^2}^{-2}\la \partial_x H, \Phi(0, z, \cdot)\ra.
\end{equation}
For future reference we note that 
\[
F(x_2-x_1)= \|\partial_x H\|_{L^2}^{-2}\la \partial_x H_1(\cdot), \Phi(x_1, x_2, \cdot)\ra=\|\partial_x H\|_{L^2}^{-2}\la \partial_x H_2(\cdot), \Phi(x_1, x_2, \cdot)\ra.
\]

\begin{lemma}
\label{lem:Fz}
There exists $C > 0$ such that for all $z \geq 1$ the function $F(z)$ satisfies
\begin{align}
\label{eq:Fz}
\big|F(z) - 2\|\partial_x H\|_{L^2}^{-2}\kappa^2 \eee^{-z}\big| \leq C z \eee^{-2z}, \\
\label{eq:dFz}
\big|F'(z) + 2\|\partial_x H\|_{L^2}^{-2}\kappa^2 \eee^{-z}\big| \leq Cz \eee^{-2z},
\end{align}
where $\kappa$ is defined by \eqref{eq:k-def}.
\end{lemma}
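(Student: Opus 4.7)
The plan is to reduce the claim to a careful Taylor expansion of $\Phi(0,z,x)$ exploiting two structural zeros of its underlying two-variable function, and then use the asymptotics of $H$ from Proposition~\ref{prop:prop-H} together with the identity \eqref{eq:reduced-force}.

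\textbf{Structure of the nonlinearity.} Recalling $\Phi(0,z,x) = f(H(x), H(x-z))$ with
\[
f(w_1, w_2) := -U'(1 - w_1 + w_2) - U'(w_1) + U'(w_2),
\]
I first note that $f(1, w_2) = 0$ (since $U'(1)=0$) and $f(w_1, -1) = 0$ (since $U'(-1)=0$ and $U'$ is odd). By Hadamard's lemma applied twice, there is a smooth function $g$ on a neighborhood of $[-1,1]^2$ with $f(w_1, w_2) = (1-w_1)(1+w_2)\,g(w_1,w_2)$. Differentiating this in $w_2$ at $w_2 = -1$ gives $(1-w_1)\,g(w_1,-1) = \partial_{w_2} f(w_1, -1) = -U''(-w_1) + U''(-1) = 1 - U''(w_1)$ (using that $U''$ is even with $U''(-1)=1$). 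Applying Hadamard once more to $g(w_1,w_2) - g(w_1,-1)$ yields the decisive expansion
\[
f(w_1, w_2) = (1-U''(w_1))(1+w_2) + (1-w_1)(1+w_2)^2\,\tilde g(w_1,w_2),
\]
with $\tilde g$ smooth and bounded on $[-1,1]^2$.

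\textbf{Main term and errors for $F(z)$.} Substituting $w_1 = H(x)$, $w_2 = H(x-z)$, the leading contribution to $F(z)\|\partial_x H\|_{L^2}^2 = \la \partial_x H, \Phi(0,z,\cdot)\ra$ comes from
\[
\int_\bR \partial_x H(x)\,(1-U''(H(x)))\,\kappa\, \eee^{x-z}\, \ud x = \kappa \eee^{-z}\!\! \int_\bR \partial_x H(x)(1-U''(H(x)))\eee^{x}\,\ud x = 2\kappa^2\eee^{-z},
\]
by identity~\eqref{eq:reduced-force}. Two error terms remain. Using Proposition~\ref{prop:prop-H} we have $|1+H(x-z) - \kappa\eee^{x-z}| \lesssim \eee^{2(x-z)}$ for $x \leq z$, while for $x \geq z$ both quantities are controlled separately; combined with the pointwise bound $\partial_x H(x)|1-U''(H(x))| \lesssim \eee^{-|x|-x_+}$, splitting $\bR$ into $(-\infty,0]$, $[0,z]$, $[z,\infty)$ gives a contribution of order $\eee^{-2z} + z\eee^{-2z} + \eee^{-2z} = O(z\eee^{-2z})$, the logarithmic-type factor arising precisely from the middle interval. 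The remainder involving $\tilde g$ is controlled using $|1-H(x)| \lesssim \eee^{-x_+}$ and $|1+H(x-z)| \lesssim \eee^{(x-z)_-}$:
\[
\int_\bR \partial_x H(x)(1-H(x))(1+H(x-z))^2\,\ud x \lesssim \int_\bR \eee^{-|x|-x_+ + 2(x-z)_-}\,\ud x \lesssim z\eee^{-2z}
\]
by a direct calculation over the same three regions (or a variant of Lemma~\ref{lem:exp-cross-term}). Combining these yields \eqref{eq:Fz}.

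\textbf{The derivative.} Since the integrand is smooth in $z$ and the estimates above are uniform, one may differentiate under the integral sign:
\[
F'(z)\|\partial_x H\|_{L^2}^2 = -\int_\bR \partial_x H(x)\,\partial_x H(x-z)\,\partial_{w_2} f(H(x), H(x-z))\, \ud x.
\]
The same two-zero structure applies: $\partial_{w_2} f(1, w_2) \equiv 0$, so $\partial_{w_2} f(w_1, w_2) = (1-w_1)\,\hat g(w_1,w_2)$ with $(1-w_1)\hat g(w_1,-1) = 1 - U''(w_1)$ by the same computation as above. Replacing $\partial_x H(x-z)$ by $\kappa\eee^{x-z} + O(\eee^{2(x-z)})$ (valid uniformly in $x\leq z$, and harmless for $x\geq z$ after pairing with the extra decay of $\partial_x H(x)$), and repeating the region decomposition, produces the leading order $-2\|\partial_x H\|_{L^2}^{-2}\kappa^2\eee^{-z}$ plus an error of order $z\eee^{-2z}$, establishing \eqref{eq:dFz}.

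\textbf{Main obstacle.} The key point, and the only delicate step, is the refined factorization of the Taylor remainder by the \emph{extra} factor $(1-w_1)$ in Step~1; without it, the middle-region contribution is merely $O(\eee^{-z})$, i.e.\ of the same order as the main term. Obtaining this factor requires using \emph{both} zeros of $f$ simultaneously, which is why the symmetry $U(-\phi)=U(\phi)$ (hence $U'(-1)=0$) enters in an essential way.
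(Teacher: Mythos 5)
Your proof is correct, and it differs from the paper's mainly in how \eqref{eq:Fz} is obtained. The paper proves \eqref{eq:dFz} first and then deduces \eqref{eq:Fz} by integrating over $[z,\infty)$, using that $\lim_{z\to\infty}F(z)=0$ (a consequence of Lemma~\ref{lem:inter-bound}); you instead prove \eqref{eq:Fz} directly from the second-order factorization $f(w_1,w_2)=(1-U''(w_1))(1+w_2)+(1-w_1)(1+w_2)^2\,\tilde g(w_1,w_2)$. That factorization is precisely the integrated form of the ingredients the paper uses at the level of the $w_2$-derivative: the bound \eqref{eq:fw1w2-5}, $|\partial_{w_2}^2 f|\lesssim|1-w_1|$, combined via the fundamental theorem of calculus with the boundary identity $\partial_{w_2}f(w_1,-1)=1-U''(w_1)$; note that both arguments invoke the symmetry of $U$ at exactly this point, through the evenness of $U''$ (your additional use of oddness of $U'$ to get the vanishing line $f(\cdot,-1)\equiv 0$ is the same symmetry in disguise). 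For \eqref{eq:dFz} your argument --- differentiate under the integral, expand $\partial_{w_2}f$ about $w_2=-1$, replace $\partial_x H(x-z)$ by $\kappa\eee^{x-z}$ with error $\min(\eee^{2(x-z)},\eee^{x-z})$, invoke \eqref{eq:reduced-force}, and run the three-region estimate over $(-\infty,0]$, $[0,z]$, $[z,\infty)$ that produces the $z\eee^{-2z}$ error --- is essentially the paper's proof. What your route buys is a self-contained derivation of \eqref{eq:Fz} that does not pass through the derivative and makes transparent that the factor $z$ comes from the middle interval; what it costs is one extra Hadamard/Taylor step beyond the first-order bound \eqref{eq:fw1w2-1}, plus the (routine) justification of differentiation under the integral sign, which the paper avoids by computing $F'$ from $\partial_{x_2}\Phi$ directly. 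The constants match the statement in both halves.
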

\begin{proof}
Lemma~\ref{lem:inter-bound} implies $\lim_{z \to \infty} F(z) = 0$,
hence \eqref{eq:Fz} follows by integrating \eqref{eq:dFz}.

We now prove \eqref{eq:dFz}. Denote $\wt H(x) := H(x - z)$. Using the notations from Lemma~\ref{lem:inter-bound} and \eqref{eq:deriv-xj-Phi}, we have
\begin{equation}
F'(z) = \|\partial_x H\|_{L^2}^{-2}\big\la \partial_x H, \partial_{x_2}\Phi(0, z, \cdot)\big\ra = -\|\partial_x H\|_{L^2}^{-2}\int_{\bR} \partial_x H(x)\,\partial_x \wt H(x)\,\partial_{w_2}f(H(x), \wt H(x))\ud x.
\end{equation}
In the computation which follows the symbol ``$\simeq$'' means ``up to terms of order $z \eee^{-2z}$''.

The fundamental theorem of calculus together with \eqref{eq:fw1w2-5} yields
\begin{equation}
\big|{-}\partial_{w_2}f(H, \wt H) - (U''(H) - U''(1))\big| = \big|\partial_{w_2}f(H, \wt H) - \partial_{w_2}f(H, -1)\big| \lesssim |(1 + \wt H)(1 - H)|.
\end{equation}
Using $|\partial_x H| + |1 - H| \lesssim \eee^{-x_+}$ and $|\partial_x \wt H| + |1 + \wt H| \lesssim \eee^{-(z - x)_+}$,
we obtain
\begin{equation}
\bigg|F'(z) - \|\partial_x H\|_{L^2}^{-2}\int_\bR (\partial_x H)(\partial_x \wt H)(U''(H) - U''(1))\ud x\bigg| \lesssim
\eee^{-2x_+}\eee^{-2(z-x)_+}.
\end{equation}
Lemma~\ref{lem:exp-cross-term} yields
\begin{equation}
\label{eq:deriv-of-F}
F'(z) \simeq \|\partial_x H\|_{L^2}^{-2}\int_{\bR} (\partial_x H)(\partial_x \wt H)(U''(H) - U''(1))\ud x.
\end{equation}
The function $U''$ is locally Lipschitz, thus $|U''(H) - U''(1)| \lesssim |1-H|$.
We also have, by Proposition~\ref{prop:prop-H},
\begin{equation}
\big|\partial_x \wt H(x) - \kappa\eee^{x - z}\big| \lesssim \min(\eee^{2(x - z)}, \eee^{x-z}).
\end{equation}
Since
\begin{equation}
\int_\bR |(\partial_x H)(1-H)|\min(\eee^{2(x - z)}, \eee^{x-z})\ud x \lesssim \int_{-\infty}^z \eee^{-2x_+}\eee^{-2(z-x)_+}\ud x
+ \int_z^\infty \eee^{-2x}\eee^{x - z}\ud x \lesssim z\eee^{-2z},
\end{equation}
we conclude that
\begin{equation}
F'(z) \simeq \kappa\|\partial_x H\|_{L^2}^{-2}\eee^{-z}\int_\bR (\partial_x H)(U''(H) - U''(1))\eee^x \ud x \simeq -2\kappa^2 \|\partial_x H\|^{-2}\eee^{-z},
\end{equation}
where in the last step we use \eqref{eq:reduced-force}.
\end{proof}
\begin{remark}
Let $x_2 - x_1 \gg 1$, $H_1 := H(x - x_1)$ and $H_2 := H(x - x_2)$. Then, by translation invariance and symmetry,
it follows from the last lemma that
\begin{equation}
\la \partial_x H_1, \Phi(x_1, x_2, \cdot)\ra = \la \partial_x H_2, \Phi(x_1, x_2, \cdot)\ra = 2\kappa^2 \eee^{-(x_2 - x_1)} + O\big((x_2 - x_1)\eee^{-2(x_2 - x_1)}\big).
\end{equation}
\end{remark}

In the next lemma, we compute the potential energy
of a kink-antikink configuration $\phi(x) = 1 - H(x - x_1) + H(x-x_2)$.
\begin{lemma}
\label{lem:V-two-kink}
There exists $C > 0$ such that if $x_2 - x_1 \gg 1$ and $H_j(x) := H(x - x_j)$, then
\begin{equation}
\label{eq:V-two-kink}
\big|E_p(1 - H_1 + H_2) - \big(2E_p(H) - 2\kappa^2 \eee^{-(x_2 - x_1)}\big)\big| \leq C(x_2 - x_1)\eee^{-2(x_2 - x_1)},
\end{equation}
where $\kappa$ is defined by \eqref{eq:k-def}.
\end{lemma}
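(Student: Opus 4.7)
The plan is to recognise the desired expansion as the result of a ``force times distance'' integration, using Lemma~\ref{lem:Fz} to identify the leading order of the attractive force and the fundamental theorem of calculus to integrate it.

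By the translation invariance of $E_p$, it suffices to work with $x_1 = 0$ and $x_2 = z$, and to estimate
\[
e(z) := E_p(\psi_z), \qquad \psi_z(x) := 1 - H(x) + H(x - z).
\]
The first step is to compute $e'(z)$. The chain rule applied to $e(z) = E_p(\psi_z)$, combined with $\partial_z \psi_z = -\partial_x H(\cdot - z)$ and the identity $\vD E_p(\psi_z) = -\Phi(0, z, \cdot)$ from \eqref{eq:DEp-ansatz}, gives
\[
e'(z) = \la \partial_x H(\cdot - z),\, \Phi(0, z, \cdot)\ra.
\]
Since $H$ is odd, the configuration $\psi_z$ is invariant under $x \mapsto z - x$; this reflection interchanges $\partial_x H_1$ and $\partial_x H_2$ while preserving $\Phi(0, z, \cdot)$, so the inner product equals $\la \partial_x H,\, \Phi(0, z, \cdot)\ra = \|\partial_x H\|_{L^2}^2 F(z)$ by the definition \eqref{eq:force-def} of $F$. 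Invoking Lemma~\ref{lem:Fz},
\[
e'(z) = 2\kappa^2 \eee^{-z} + O(z\, \eee^{-2z}).
\]

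The second step is to establish the asymptotic value $\lim_{z \to \infty} e(z) = 2 E_p(H)$. I would split the integral defining $e(z)$ at $x = z/2$. On the left half, writing $\psi_z = -H(\cdot) + (1 + H(\cdot - z))$, Proposition~\ref{prop:prop-H} gives $|1 + H(x-z)| + |\partial_x H(x-z)| \lesssim \eee^{-(z-x)}$ for $x \leq z/2$, and combining local Lipschitz regularity of $U$ on $[-1,1]$ with Lemma~\ref{lem:exp-cross-term} shows that all cross terms vanish as $z \to \infty$. Consequently the left integral converges to $\int_\bR (\tfrac{1}{2}(\partial_x H)^2 + U(-H))\,\ud x = E_p(H)$, and a symmetric argument on $[z/2, +\infty)$ gives the other $E_p(H)$.

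Combining these steps via the fundamental theorem of calculus,
\[
e(z) - 2E_p(H) \;=\; -\int_z^\infty e'(w)\, \ud w \;=\; -2\kappa^2 \eee^{-z} + O(z\, \eee^{-2z}),
\]
which is precisely \eqref{eq:V-two-kink}. The only genuinely nontrivial point is the asymptotic limit in the second step, but the pointwise exponential decay from Proposition~\ref{prop:prop-H} together with Lemma~\ref{lem:exp-cross-term} reduces it to a routine splitting argument; everything else is mechanical from Lemma~\ref{lem:Fz}.
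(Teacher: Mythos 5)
Your proof is correct and follows essentially the same route as the paper: differentiate $E_p(1-H+H(\cdot-z))$ in $z$, identify $e'(z)$ with the interaction force $\|\partial_x H\|_{L^2}^2 F(z)$ via the reflection $x\mapsto z-x$, establish $\lim_{z\to\infty}e(z)=2E_p(H)$ by an exponential-tail/cross-term argument, and integrate using Lemma~\ref{lem:Fz}. Your sign bookkeeping ($e'(z)=+\|\partial_x H\|_{L^2}^2 F(z)>0$) is the consistent one leading to the stated $-2\kappa^2\eee^{-z}$ correction, so nothing further is needed.
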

\begin{proof}
Without loss of generality assume $x_1 = 0$ and $x_2 = z \gg 1$. Observe that
\begin{gather}
\dd z\int_\bR \partial_x H\,\partial_x H(\cdot - z)\ud x = -\int_\bR \partial_x H \partial_x^2 H(\cdot - z) = \int_\bR U'(H)\partial_x H(\cdot- z)\ud x, \\
\dd z \int_\bR U(1 - H + H(\cdot - z))\ud x = -\int_\bR U'(1 - H + H(\cdot - z))\partial_x H(\cdot - z)\ud x,
\end{gather}
thus
\begin{equation}
\dd z E_p(1 - H + H(\cdot - z)) = -\int_\bR\big(U'(1 - H + H(\cdot - z))-U'(H)\big)\partial_x H(\cdot - z)\ud x.
\end{equation}
By symmetry, we obtain
\begin{equation}
\begin{aligned}
\dd z E_p(1 - H + H(\cdot - z)) &= \int_\bR\big(U'(1 - H + H(\cdot - z))-U'(H(\cdot - z))\big)\partial_x H\ud x \\
&= \int_\bR\big(U'(1 - H + H(\cdot - z)) + U'(H)-U'(H(\cdot - z))\big)\partial_x H\ud x\\ &= -\|\partial_x H\|_{L^2}^2 F(z),
\end{aligned}
\end{equation}
where $F(z)$ is defined by \eqref{eq:force-def}.
It remains to check that
\begin{equation}
\label{eq:Ep-limit}
\lim_{z\to\infty} E_p(1 - H + H(\cdot - z)) = 2E_p(H),
\end{equation}
and \eqref{eq:V-two-kink} will follow by integrating \eqref{eq:Fz} in $z$.

Let $-1 \leq w_1, w_2 \leq 1$. Integrating the inequality $|U'(1-w + w_2) + U'(w)| \lesssim |1 + w_2|$
for $w_1 \leq w \leq 1$, we get
$|U(1 - w_1 + w_2) - U(w_1) - U(w_2)| \lesssim |(1-w_1)(1+w_2)|$, thus
\begin{equation}
\label{eq:Ep-limit-1}
\big|U(1 - H(x) + H(x - z)) - U(H(x)) - U(H(x - z))\big| \lesssim \eee^{-x_+}\eee^{-(z-x)_+}.
\end{equation}
We also have
\begin{equation}
\label{eq:Ep-limit-2}
\begin{aligned}
\big|\big(\partial_x(1 - H(x) + H(x - z))\big)^2 - (\partial_x H(x))^2 - (\partial_x H(x - z))^2\big| \\
 \lesssim |\partial_x H(x)\partial_x H(z - x)| \lesssim \eee^{-x_+}\eee^{-(z-x)_+},
\end{aligned}
\end{equation}
and \eqref{eq:Ep-limit} follows by integrating \eqref{eq:Ep-limit-1} and \eqref{eq:Ep-limit-2} in $x$.
\end{proof}
\subsection{Taylor expansions}
To finish this section, we prove a few estimates based on the Taylor expansion of the nonlinearity.
\begin{lemma} There exists $C > 0$ such that for all $-1 \leq w, \sh w \leq 1$ and $|g| + |\sh g| \ll 1$
the following bounds hold:
\begin{gather}
\label{eq:U-taylor}
\big|U(w + g) - U(w) - U'(w)g\big| \leq Cg^2, \\
\label{eq:U-taylor-2}
\big|U(w + g) - U(w) - U'(w)g - U''(w)g^2/2\big| \leq Cg^3, \\
\label{eq:U'-taylor}
\big|U'(w + g) - U'(w) - U''(w)g\big| \leq Cg^2, \\
\label{eq:U''-taylor}
\big|U''(w + g) - U''(w) - U'''(w)g\big| \leq Cg^2,
\end{gather}
\begin{equation}
\label{eq:U'-taylor-diff}
\begin{aligned}
\big|\big(U'(\sh w + \sh g) - U'(\sh w) - U''(\sh w)\sh g\big) -
\big(U'(w + g) - U'(w) - U''(w)g\big)\big| \\
\leq C\big(|\sh g| + |g|\big)\big(\big|\sh g - g\big| + |\sh w - w|\big(|\sh g| + |g|)\big).
\end{aligned}
\end{equation}
\end{lemma}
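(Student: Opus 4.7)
The plan is to derive all five inequalities from Taylor's formula with integral remainder, exploiting that $U \in C^\infty(\bR)$ and that the relevant arguments $w+g$, $\sh w + \sh g$ all lie in the fixed bounded interval $[-2, 2]$ (since $|w|, |\sh w| \leq 1$ and $|g|, |\sh g| \ll 1$), on which every derivative $U^{(k)}$ is bounded by a constant depending only on $k$ and $U$.

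For the four single-point bounds \eqref{eq:U-taylor}--\eqref{eq:U''-taylor}, I would simply apply the integral remainder formula at the appropriate order. For instance,
\begin{equation*}
U(w+g) - U(w) - U'(w)\,g = g^2 \int_0^1 (1-s)\,U''(w+sg)\,\ud s
\end{equation*}
gives \eqref{eq:U-taylor} with $C = \tfrac12 \|U''\|_{L^\infty([-2,2])}$; the remaining bounds \eqref{eq:U-taylor-2}, \eqref{eq:U'-taylor}, \eqref{eq:U''-taylor} are obtained identically by using one more Taylor term and applying the same argument to $U$, $U'$, $U''$ respectively.

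The only substantive step is the comparison estimate \eqref{eq:U'-taylor-diff}. I would introduce the remainder functional
\begin{equation*}
R(w, g) := U'(w + g) - U'(w) - U''(w)\, g
\end{equation*}
and compute its partial derivatives
\begin{equation*}
\partial_g R(w, g) = U''(w+g) - U''(w), \qquad \partial_w R(w, g) = U''(w+g) - U''(w) - U'''(w)\, g.
\end{equation*}
Lipschitzness of $U''$ on $[-2,2]$ gives $|\partial_g R(w, g)| \lesssim |g|$, while the already-established bound \eqref{eq:U''-taylor} gives $|\partial_w R(w, g)| \lesssim g^2$. Splitting
\begin{equation*}
R(\sh w, \sh g) - R(w, g) = \bigl[R(\sh w, \sh g) - R(w, \sh g)\bigr] + \bigl[R(w, \sh g) - R(w, g)\bigr]
\end{equation*}
and estimating each bracket by the fundamental theorem of calculus in the $w$- and $g$-variable respectively then yields
\begin{equation*}
|R(\sh w, \sh g) - R(w, g)| \lesssim |\sh w - w|\,|\sh g|^2 + |\sh g - g|\,(|\sh g| + |g|),
\end{equation*}
which fits inside the desired right-hand side $C(|\sh g|+|g|)\bigl(|\sh g - g| + |\sh w - w|(|\sh g|+|g|)\bigr)$.

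I do not anticipate any genuine obstacle: the entire lemma is a routine exercise in Taylor's theorem. The only mild subtlety worth flagging is that one must use the \emph{second-order} expansion \eqref{eq:U''-taylor} of $U''$ (rather than mere Lipschitzness of $U''$) when bounding $\partial_w R$, since a crude $O(|g|)$ bound there would produce an excess linear-in-$|\sh w - w|$ term and spoil the quadratic cross term $|\sh w - w|(|\sh g|+|g|)^2$ on the right-hand side.
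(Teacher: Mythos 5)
Your proof is correct, and for the key estimate \eqref{eq:U'-taylor-diff} it takes a mildly different route than the paper (the four one-point bounds \eqref{eq:U-taylor}--\eqref{eq:U''-taylor} are handled the same way; the paper just calls them clear). The paper represents both remainders by the Taylor formula $U'(w+g)-U'(w)-U''(w)g=\int_0^g s\,U'''(w+g-s)\ud s$ and splits the difference into (i) the integral over $[0,\sh{g}]$ with the argument of $U'''$ shifted from $(w,g)$ to $(\sh{w},\sh{g})$, controlled by local Lipschitzness of $U'''$ and yielding $(\sh{g})^2\big(|\sh{w}-w|+|\sh{g}-g|\big)$, and (ii) the change of the integration limit from $\sh{g}$ to $g$, bounded by $\big|\int_g^{\sh{g}}|s|\ud s\big|$ with a short sign case analysis giving $\tfrac12(|\sh{g}|+|g|)|\sh{g}-g|$. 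You instead differentiate the remainder functional $R(w,g)$ and split the increment into a $w$-variation at fixed $\sh{g}$ plus a $g$-variation at fixed $w$, bounding $\partial_w R$ by \eqref{eq:U''-taylor} and $\partial_g R$ by Lipschitzness of $U''$; this gives $|\sh{w}-w|\,(\sh{g})^2+|\sh{g}-g|\,(|\sh{g}|+|g|)$, which fits the stated right-hand side just as well and dispenses with the sign case analysis, since you only need $\sup|s|\leq|\sh{g}|+|g|$ on the segment between $g$ and $\sh{g}$. The regularity input is the same in both arguments (Lipschitz $U'''$, harmless since $U\in C^\infty$), appearing in the paper through part (i) and in your argument through \eqref{eq:U''-taylor}; your closing observation that mere Lipschitzness of $U''$ would not suffice for the $\partial_w R$ bound is precisely the point that makes the cross term $|\sh{w}-w|(|\sh{g}|+|g|)^2$ come out right.
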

\begin{proof}
Bounds \eqref{eq:U-taylor}, \eqref{eq:U-taylor-2}, \eqref{eq:U'-taylor} and \eqref{eq:U''-taylor} are clear,
so we are left with \eqref{eq:U'-taylor-diff}.
The Taylor formula yields
\begin{align}
U'(w + g) - U'(w) - U''(w)g &= \int_0^g s U'''(w+g-s)\ud s, \\
U'(\sh w + \sh g) - U'(\sh w) - U''(\sh w)\sh g &= \int_0^{\sh g} s U'''(\sh w+\sh g-s)\ud s.
\end{align}
We observe that, since $U'''$ is locally Lipschitz,
\begin{equation}
\begin{aligned}
\bigg|\int_0^{\sh g} s U'''(\sh w+\sh g-s)\ud s - \int_0^{\sh g} s U'''(w+g-s)\ud s\bigg| &\lesssim \bigg|\int_0^{\sh g} s|\sh w + \sh g - w - g|\ud s\bigg| \\
&\lesssim (\sh g)^2(|\sh w - w| + |\sh g - g|).
\end{aligned}
\end{equation}
We also have
\begin{equation}
\bigg|\int_0^{\sh g} s U'''(w+g-s)\ud s - \int_0^{g} s U'''(w+g-s)\ud s\bigg| = \bigg|\int_g^{\sh g}s U'''(w + g - s)\ud s\bigg|
\lesssim \bigg|\int_g^{\sh g}|s|\ud s\bigg|.
\end{equation}
If $g$ and $\sh g$ have the same sign, then the last integral equals $\frac 12 |(\sh g)^2 - g^2| = \frac 12 (|\sh g| + |g|)|\sh g - g|$.
If $g$ and $\sh g$ have opposite signs, then we obtain $\frac 12 ((\sh g)^2 + g^2) \leq \frac 12 (|\sh g| + |g|)^2 = \frac 12 (|\sh g| + |g|)|\sh g - g|$. This proves \eqref{eq:U'-taylor-diff}.
\end{proof}

\section{Main order asymptotics of any kink-antikink pair}
\label{sec:mod}

We consider any solution $\phi$ of \eqref{eq:phi4} of the form
\begin{equation}
\label{eq:lyap-schmidt-phi}
\phi(t, x) = 1-H(x - x_1(t)) + H(x - x_2(t)) + g(t, x),
\end{equation}
satisfying conditions \eqref{eq:norm-to-zero} and \eqref{eq:dist-to-infty-0}, equivalently
\begin{align}
\lim_{t \to \infty}\|(g(t), \partial_t \phi(t))\|_{\cE} = 0, \label{eq:g-to-0} \\
\lim_{t \to \infty} (x_2(t) - x_1(t)) = \infty. \label{eq:dist-to-infty}
\end{align}
The first step is to specify the choice of $(x_1, x_2)$.
\begin{lemma}
Under the conditions \eqref{eq:g-to-0} and \eqref{eq:dist-to-infty}, there exists a pair of
twice continuously differentiable functions
$(\wt x_1, \wt x_2): [T_0, \infty) \to \bR $ such that the following holds.
Define $\wt g: [T_0, \infty) \to H^1(\bR)$ by the relation $\phi(t, x) = 1-H(x - \wt x_1(t)) + H(x - \wt x_2(t)) + \wt g(t, x)$.
Then \eqref{eq:g-to-0} and \eqref{eq:dist-to-infty} hold with $(x_1, x_2, g)$ replaced by $(\wt x_1, \wt x_2, \wt g)$ and,
moreover, $\wt g(t)$ satisfies the orthogonality conditions
\begin{equation}
\label{eq:g-tilde-orth}
\la \partial_x H(\cdot - \wt x_1(t)), \wt g(t)\ra = 0,\qquad \la \partial_x H(\cdot - \wt x_2(t)), \wt g(t)\ra = 0.
\end{equation}
\end{lemma}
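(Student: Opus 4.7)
The statement is a standard modulation-theory claim that I would establish by applying the implicit function theorem (IFT) at each time. Introduce the map
\begin{equation}
\Psi_j(t, y_1, y_2) := \la \partial_x H(\cdot - y_j),\, \phi(t, \cdot) - 1 + H(\cdot - y_1) - H(\cdot - y_2)\ra, \qquad j \in \{1, 2\},
\end{equation}
so that \eqref{eq:g-tilde-orth} is equivalent to $\Psi(t, \wt x_1(t), \wt x_2(t)) = 0$. At the reference point $(y_1, y_2) = (x_1(t), x_2(t))$, the inner argument reduces to $g(t)$, hence $|\Psi(t, x_1(t), x_2(t))| \to 0$ as $t \to \infty$ by \eqref{eq:g-to-0}. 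A direct differentiation gives, near $(x_1(t), x_2(t))$,
\begin{equation}
\partial_{y_1}\Psi_1 = -\|\partial_x H\|_{L^2}^2 + o(1), \qquad \partial_{y_2}\Psi_2 = +\|\partial_x H\|_{L^2}^2 + o(1),
\end{equation}
while the off-diagonal entries are controlled by $|\la \partial_x H(\cdot - y_1), \partial_x H(\cdot - y_2)\ra|$, which is exponentially small in $y_2 - y_1$ by Lemma~\ref{lem:inter-bound}. Using \eqref{eq:g-to-0} and \eqref{eq:dist-to-infty}, the Jacobian is therefore uniformly invertible for $t$ large and $(y_1, y_2)$ in a suitable (shrinking) neighborhood of $(x_1(t), x_2(t))$.

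For each such $t$, the quantitative IFT produces unique $(\wt x_1(t), \wt x_2(t))$ with $\Psi(t, \wt x_1(t), \wt x_2(t)) = 0$ and $|\wt x_j(t) - x_j(t)| \to 0$ as $t \to \infty$. Because $H(\cdot - \wt x_j(t)) - H(\cdot - x_j(t)) \to 0$ in $H^1(\bR)$ whenever $\wt x_j(t) - x_j(t) \to 0$, the conditions \eqref{eq:g-to-0} and \eqref{eq:dist-to-infty} transfer without change to $(\wt x_1, \wt x_2, \wt g)$.

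The regularity of the trajectories in $t$ is extracted from the IFT together with the regularity of $\phi$ as a curve in the energy space. Since $\phi \in C([T_0, \infty); H^1(\bR)) \cap C^1([T_0, \infty); L^2(\bR))$, the map $t \mapsto \Psi_j(t, y_1, y_2)$ is $C^1$ for fixed $(y_1, y_2)$, so $\wt x_j \in C^1$. For $C^2$ regularity, I would interpret $\la \partial_x H(\cdot - y_j), \partial_t^2 \phi(t)\ra$ via the equation \eqref{eq:phi4}: integrating by parts rewrites it as $-\la \partial_x^3 H(\cdot - y_j), \phi(t)\ra - \la \partial_x H(\cdot - y_j), U'(\phi(t))\ra$, both of which are continuous functions of $t$ since $\phi(t) \in C_t H^1$, $U'$ is locally Lipschitz, and $\partial_x H$ is Schwartz.

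The only subtle point is making the IFT uniform in $t$; this is handled by the quantitative version of IFT combined with the simultaneous vanishing, as $t \to \infty$, of the residual $\Psi(t, x_1(t), x_2(t))$ and of the off-diagonal Jacobian entries, which together determine the threshold $T_0$.
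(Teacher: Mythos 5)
Your proposal is correct in substance, and its first half is exactly the paper's Step~1: fix $t$ and apply a quantitative implicit function theorem to the map $(y_1,y_2)\mapsto\big(\la \partial_x H(\cdot-y_j),\,\phi(t)-1+H(\cdot-y_1)-H(\cdot-y_2)\ra\big)_{j=1,2}$, whose Jacobian is uniformly non-degenerate because the diagonal entries are $\mp\|\partial_x H\|_{L^2}^2+o(1)$ and the off-diagonal entries are exponentially small in the separation. Where you genuinely diverge is the time regularity. The paper invokes the fixed-time IFT only at the single time $T_0$ and then \emph{defines} $(\wt x_1,\wt x_2)$ for $t>T_0$ as the solution of the ODE system obtained by differentiating the orthogonality conditions in time (the same system as \eqref{eq:x'bound1}--\eqref{eq:x'bound2}); orthogonality is then propagated, and \eqref{eq:g-to-0}, \eqref{eq:dist-to-infty} are recovered by a bootstrap using the uniqueness part of the fixed-time step. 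That route gives continuity and $C^1$ regularity of the parameters by construction, and it sidesteps a point your route must address explicitly: in Definition~\ref{d:kak} the given $x_1(t),x_2(t)$ are arbitrary real-valued functions, not assumed continuous, so your pointwise-in-$t$ selection centered at $(x_1(t),x_2(t))$ has to be identified with the local $C^1$ branches produced by the parametric IFT. This is fillable (re-center the fixed-time IFT at $(\wt x_1(t_0),\wt x_2(t_0))$ for $t$ near $t_0$, using continuity of $t\mapsto\phi(t)$ in $H^1$ and local uniqueness), but it is precisely the "uniformity" issue your last paragraph only gestures at, so it should be spelled out. Your approach buys a more self-contained argument that never introduces the modulation ODE; the paper's buys the explicit formulas for $\wt x_j'$ that are reused immediately in Lemma~\ref{lem:en-bound}.

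Two minor points. Your device for $C^2$ regularity (reading $\la\partial_x H(\cdot-y_j),\partial_t^2\phi\ra$ through the equation \eqref{eq:phi4}) is the same one the paper implicitly relies on when it deduces twice differentiability from its ODE system, so that part is fine; only note the sign: two integrations by parts give $+\la\partial_x^3 H(\cdot-y_j),\phi\ra$, and one should pair $\partial_x^3 H(\cdot-y_j)$ against $\phi-1$ (or stop after one integration by parts, pairing $\partial_x^2 H$ with $\partial_x\phi\in L^2$) to have an absolutely convergent expression. Neither affects the validity of the argument.
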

\begin{proof}
The proof follows a well-known scheme based on a quantitative version of the Implicit Function Theorem,
see for instance \cite[Lemma~3.3]{JJ-Pisa}.

\textbf{Step 1.} (Choice of parameters for a fixed time.)
Fix fix $t$ and write $(x_1, x_2, g, \phi)$ instead of $(x_1(t), x_2(t), g(t), \phi(t))$.
We prove that there exists $C_0, \eta_0 > 0$ having the following property.
If $\phi(x) = 1 - H(x - x_1) + H(x - x_2) + g(x)$ with $x_2 - x_1 \geq \eta^{-1}$, $\|g\|_{H^1} \leq \eta$ and $\eta \leq \eta_0$,
then there exists a unique pair $(\wt x_1, \wt x_2)$ such that $\wt g(x) := \phi(x) - (1 - H(x - \wt x_1) + H(x - \wt x_2))$
satisfies $\wt x_2 - \wt x_1 \geq (C\eta)^{-1}$, $\|\wt g\|_{H^1} \leq C\eta$ and the orthogonality conditions
\begin{equation}
\la \partial_x H(\cdot - \wt x_1), \wt g\ra = 0,\qquad \la \partial_x H(\cdot - \wt x_2), \wt g\ra = 0.
\end{equation}

We define $\Gamma: \bR^2 \times H^1(\bR) \to \bR^2$ by
\begin{equation}
\begin{aligned}
\Gamma(x_1, x_2, \phi) := (&\la \partial_x H(\cdot - x_1), \phi - (1 - H(\cdot - x_1) + H(\cdot - x_2))\ra,\\
& \la \partial_x H(\cdot - x_2), \phi - (1 - H(\cdot - x_1) + H(\cdot - x_2))\ra).
\end{aligned}
\end{equation}
It is easy to check that $\vD_{x_1, x_2}\Gamma$ is a uniformly non-degenerate matrix, which implies the claim.

\textbf{Step 2.} (Time differentiability of the modulation parameters.)
Thus, if $T_0$ is large enough, by Step 1. there exist $\wt x_1(T_0)$ and $\wt x_2(T_0)$ such that \eqref{eq:g-tilde-orth}
holds for $t = T_0$. We now define $(\wt x_1, \wt x_2)$ as the solution of the system of differential equations
(with initial conditions at $t = T_0$)
\begin{equation}
\begin{aligned}
\wt x_1'(t)\big({-}\|\partial_x H\|_{L^2}^2 - \la \partial_x^2 \wt H_1(t), \wt g(t)\ra\big) + \wt x_2'(t)\la \partial_x \wt H_1(t), \partial_x \wt H_2(t)\ra + \la \partial_x \wt H_1(t), \partial_t \phi(t)\ra &= 0, \\
\wt x_2'(t)\big(\|\partial_x H\|_{L^2}^2 - \la \partial_x^2 \wt H_2(t), \wt g(t)\ra\big) - \wt x_1'(t)\la \partial_x \wt H_1(t), \partial_x \wt H_2(t)\ra
+ \la \partial_x \wt H_2(t), \partial_t \phi(t)\ra &= 0,
\end{aligned}
\end{equation}
where we abbreviated $\wt H_j(t, x) := H(x - \wt x_j(t))$ and $\wt g(t) := \phi(t) - (1 - \wt H_1(t) + \wt H_2(t))$.
The computation at the beginning of Lemma~\ref{lem:en-bound} below shows that \eqref{eq:g-tilde-orth} then holds for all $t \geq T_0$.
By a straightforward bootstrap argument and using the uniqueness part of Step 1, $(\wt x_1, \wt x_2)$ satisfy $\wt x_2(t) - \wt x_1(t) \to \infty$ and $\|\wt g(t)\|_{H^1} \to 0$.
Also, we deduce from the differential equations that $\wt x_1$ and $\wt x_2$ are twice continuously differentiable.
\end{proof}
In the sequel, we write $(x_1, x_2, g)$ instead of $(\wt x_1, \wt x_2, \wt g)$.
In other words, we have \eqref{eq:lyap-schmidt-phi}, \eqref{eq:g-to-0}, \eqref{eq:dist-to-infty} and,
additionally, $x_1, x_2$ are twice continuously differentiable and satisfy
\begin{equation}
\label{eq:g-orth}
\la \partial_x H(\cdot - x_1(t)), g(t)\ra = 0,\qquad \la \partial_x H(\cdot - x_2(t)), g(t)\ra = 0.
\end{equation}

\begin{remark}
Passing from $\phi$ to the triple $(x_1, x_2, g)$ defines a diffeomorphism of a neighborhood of the kink-antikink pairs
and a codimension two submanifold of $\bR^2 \times H^1(\bR)$ determined by the conditions \eqref{eq:g-orth}.
Note that $(x_1, x_2, g)$ is not a system of coordinates;
informally speaking, $g(t,x)$ and $(x_1(t), x_2(t))$ are ``not independent''.
This causes some trouble when one wants to compare two solutions corresponding to two different pairs of trajectories
$X = (x_1, x_2)$ and $\sh X = (\sh x_1, \sh x_2)$, see Lemma~\ref{lem:path-dep} below.
\end{remark}

%
Writing $H_j(t, x) := H(x - x_j(t))$, we have
\begin{align}
\label{eq:lyap-schmidt-dtphi}
\partial_t \phi(t) &= x_1'(t)\partial_x H_1(t) - x_2'(t)\partial_x H_2(t) + \partial_t g(t), \\
\partial_t^2 \phi(t) &= x_1''(t)\partial_x H_1(t) -(x_1'(t))^2\partial_x^2 H_1(t) - x_2''(t)\partial_x H_2(t) + (x_2'(t))^2 \partial_x^2 H_2(t) + \partial_t^2 g(t), \\
\label{eq:lyap-schmidt-dxphi}
\partial_x \phi(t) &= {-}\partial_x H_1(t) + \partial_x H_2(t) + \partial_x g(t), \\
\partial_x^2 \phi(t) &= -\partial_x^2 H_1(t) + \partial_x^2 H_2(t) + \partial_x^2 g(t),
\end{align}
thus \eqref{eq:phi4} rewrites as
\begin{equation}
\begin{aligned}
\partial_t^2 g &+ x_1''\partial_x H_1 - (x_1')^2\partial_x^2 H_1 - x_2''\partial_x H_2 + (x_2')^2 \partial_x^2 H_2 \\
&+ \partial_x^2 H_1 - \partial_x^2 H_2 - \partial_x^2 g + U'(1 - H_1 + H_2 + g) = 0,
\end{aligned}
\end{equation}
or, using $\partial_x^2 H_j = U'(H_j)$,
\begin{equation}
\label{eq:dt2g}
\begin{aligned}
\partial_t^2 g &+ x_1''\partial_x H_1 - (x_1')^2\partial_x^2 H_1 - x_2''\partial_x H_2 + (x_2')^2 \partial_x^2 H_2 \\
 &- \partial_x^2 g + U'(1 - H_1 + H_2 + g) + U'(H_1) - U'(H_2) = 0.
\end{aligned}
\end{equation}

\begin{lemma}
\label{lem:en-bound}
If $\phi$ is a solution of \eqref{eq:phi4} such that \eqref{eq:g-to-0} and \eqref{eq:dist-to-infty} hold, then
there exist $C_0$ and $T_0$ such that for all $t \geq T_0$ the following bounds hold:
\begin{gather}
\label{eq:en-bound}
\|g(t)\|_{H^1}^2 + \|\partial_t \phi(t)\|_{L^2}^2 \leq C_0\eee^{-(x_2(t) - x_1(t))}, \\
|x_1'(t)| + |x_2'(t)| \leq C_0 \eee^{-\frac 12(x_2(t) - x_1(t))}, \label{eq:x'bound} \\
|x_1''(t)| + |x_2''(t)| \leq C_0 \eee^{-(x_2(t) - x_1(t))}. \label{eq:x''bound}
\end{gather}
\end{lemma}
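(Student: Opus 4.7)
The plan is to derive \eqref{eq:en-bound} from the energy identity $E(\phi) = 2E_p(H)$, which holds by the definition of a strongly interacting kink-antikink pair (cf.\ the remark following Definition~\ref{d:kak}). Writing $z(t) := x_2(t) - x_1(t)$ and Taylor-expanding the potential energy via \eqref{eq:U-taylor-2}, identifying the first variation as $-\Phi(x_1, x_2, \cdot)$ through \eqref{eq:DEp-ansatz}, and invoking Lemma~\ref{lem:V-two-kink}, I rewrite $E(\phi) = 2E_p(H)$ as
\[
\tfrac 12 \|\partial_t \phi\|_{L^2}^2 + \tfrac 12 \langle g, L_X g\rangle = 2\kappa^2 \eee^{-z} + \langle \Phi, g\rangle + O(z \eee^{-2 z}) + O(\|g\|_{H^1}^3).
\]
The orthogonality conditions \eqref{eq:g-orth} activate Lemma~\ref{lem:D2H}, yielding $\langle g, L_X g\rangle \geq \lambda_0 \|g\|_{H^1}^2$. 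One then controls $\langle \Phi, g\rangle$ by Cauchy-Schwarz combined with $\|\Phi\|_{L^2}^2 \lesssim z \eee^{-2z}$ from Lemma~\ref{lem:inter-bound} and Lemma~\ref{lem:exp-cross-term}, and absorbs the cubic remainder using the qualitative smallness $\|g\|_{H^1} \to 0$ from \eqref{eq:g-to-0}. This produces \eqref{eq:en-bound}.

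For \eqref{eq:x'bound}, I differentiate \eqref{eq:g-orth} once in $t$ and substitute $\partial_t g = \partial_t \phi - x_1' \partial_x H_1 + x_2' \partial_x H_2$ coming from \eqref{eq:lyap-schmidt-dtphi}. The result is the linear system
\[
\begin{pmatrix} \|\partial_x H\|_{L^2}^2 + \langle \partial_x^2 H_1, g\rangle & -\langle \partial_x H_1, \partial_x H_2\rangle \\ -\langle \partial_x H_1, \partial_x H_2\rangle & \|\partial_x H\|_{L^2}^2 - \langle \partial_x^2 H_2, g\rangle \end{pmatrix} \begin{pmatrix} x_1' \\ x_2' \end{pmatrix} = \begin{pmatrix} \langle \partial_x H_1, \partial_t \phi\rangle \\ -\langle \partial_x H_2, \partial_t \phi\rangle \end{pmatrix}.
\]
By the previous paragraph the diagonal corrections are $o(1)$, while the off-diagonal entries are $O(z \eee^{-z})$ by Lemmas~\ref{lem:inter-bound}--\ref{lem:exp-cross-term}. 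Hence the matrix is an $o(1)$-perturbation of $\|\partial_x H\|_{L^2}^2\, I$ with a uniformly bounded inverse, and Cauchy-Schwarz applied to the right-hand side together with \eqref{eq:en-bound} yields $|x_j'| \lesssim \|\partial_t \phi\|_{L^2} \lesssim \eee^{-z/2}$.

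For \eqref{eq:x''bound} I use \eqref{eq:dt2g}. The algebraic identity $U'(1 - H_1 + H_2) + U'(H_1) - U'(H_2) = -\Phi$, which follows from \eqref{eq:Phi-def}, together with the Taylor expansion \eqref{eq:U'-taylor}, recasts \eqref{eq:dt2g} in the form $\partial_t^2 g + x_1'' \partial_x H_1 - x_2'' \partial_x H_2 + L_X g = \Phi + R'$, with $\|R'\|_{L^2} \lesssim \eee^{-z}$ after applying \eqref{eq:x'bound} and \eqref{eq:en-bound}. Testing against $\partial_x H_j$ produces a second $2 \times 2$ linear system for $(x_1'', x_2'')$ whose coefficient matrix is again an $o(1)$-perturbation of $\|\partial_x H\|_{L^2}^2\,\mathrm{diag}(1, -1)$. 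The four right-hand side contributions $\langle \partial_x H_j, \Phi\rangle$, $\langle \partial_x H_j, L_X g\rangle$, $\langle \partial_x H_j, \partial_t^2 g\rangle$, and $\langle \partial_x H_j, R'\rangle$ are of orders $\eee^{-z}$ (remark after Lemma~\ref{lem:Fz}), $\eee^{-3z/2}$ (integrate by parts, use $\partial_x^3 H_j = U''(H_j)\partial_x H_j$ to obtain $L_X \partial_x H_j = (U''(1-H_1+H_2) - U''(H_j))\partial_x H_j$, then apply \eqref{eq:inter-bound-2}), and $\eee^{-z}$ (for the remaining two, the inner product with $\partial_t^2 g$ is computed by differentiating \eqref{eq:g-orth} twice and yields only terms of the form $(x_j')^2 \langle \partial_x^3 H_j, g\rangle$, $x_j' \langle \partial_x^2 H_j, \partial_t g\rangle$, and $x_j'' \langle \partial_x^2 H_j, g\rangle$; the first two are $O(\eee^{-z})$ by the previous paragraph and the identity $\|\partial_t g\|_{L^2} \lesssim \eee^{-z/2}$, the third is absorbed into the LHS since $\|g\|_{L^2} = o(1)$). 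Inverting the system gives $|x_j''| \lesssim \eee^{-z}$.

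I expect the main obstacle to be this last step: one must rearrange \eqref{eq:dt2g} so that, after projection onto $\partial_x H_j$, no uncontrolled $\eee^{-z/2}$ contribution survives. The essential inputs are the exact cancellation encoded in $U'(1 - H_1 + H_2) + U'(H_1) - U'(H_2) = -\Phi$, the quantitative decay \eqref{eq:inter-bound-2} of the ``overlap term'' $(U''(1-H_1+H_2) - U''(H_j))\partial_x H_j$ in the interaction region, and the velocity bound \eqref{eq:x'bound} already proved, which makes the Lorentz-type correction $(x_j')^2 \partial_x^2 H_j$ quadratically small.
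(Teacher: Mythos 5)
Your proposal is correct and follows essentially the same route as the paper: the energy identity $E(\phi)=2E_p(H)$ combined with the Taylor expansion of $E_p$ around $1-H_1+H_2$, Lemma~\ref{lem:V-two-kink} and the coercivity of Lemma~\ref{lem:D2H} for \eqref{eq:en-bound}, the differentiated orthogonality conditions giving a diagonally dominant system for \eqref{eq:x'bound}, and for \eqref{eq:x''bound} the same cancellations (kernel property of $\partial_x H_j$, the bound \eqref{eq:inter-bound-2} on the overlap term, the quadratic Taylor remainder, and $\langle\partial_x H_j,\Phi\rangle=O(\eee^{-z})$). The only difference is cosmetic bookkeeping in the last step — you project the $g$-equation and eliminate $\langle\partial_x H_j,\partial_t^2 g\rangle$ via the twice-differentiated orthogonality conditions, whereas the paper differentiates the modulation system once more and substitutes $\partial_t^2\phi=\partial_x^2\phi-U'(\phi)$ — which amounts to the same computation.
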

\begin{proof}
Differentiating in $t$ the first relation in \eqref{eq:g-orth}, we obtain
\begin{equation}
\label{eq:x'bound1}
\begin{aligned}
0 &= \dd t\la \partial_x H_1(t), g(t)\ra = -x_1'(t)\la \partial_x^2 H_1(t), g(t)\ra + \la\partial_x H_1(t), \partial_t g(t)\ra \\
&= -x_1'(t)\la \partial_x^2 H_1(t), g(t)\ra + \la \partial_x H_1(t), -x_1'(t)\partial_x H_1(t) + x_2'(t) \partial_x H_2(t) + \partial_t \phi(t)\ra \\
&= x_1'(t)\big({-}\|\partial_x H\|_{L^2}^2 - \la \partial_x^2 H_1(t), g(t)\ra\big) + x_2'(t)\la \partial_x H_1(t), \partial_x H_2(t)\ra + \la \partial_x H_1(t), \partial_t \phi(t)\ra.
\end{aligned}
\end{equation}
Similarly, differentiating in $t$ the second relation in \eqref{eq:g-orth} yields
\begin{equation} \label{eq:x'bound2} 
0 = x_2'(t)\big(\|\partial_x H\|_{L^2}^2 - \la \partial_x^2 H_2(t), g(t)\ra\big) - x_1'(t)\la \partial_x H_1(t), \partial_x H_2(t)\ra
+ \la \partial_x H_2(t), \partial_t \phi(t)\ra.
\end{equation}
This can be viewed as a linear system for $x_1'(t)$ and $x_2'(t)$.
Note that $\lim_{t \to \infty}|\la \partial_x H_1(t), \partial_x H_2(t)\ra| = 0$ by \eqref{eq:inter-bound-3},
so the matrix of the system is diagonally dominant. In particular, we obtain
\begin{equation}
\label{eq:x'bound-2}
|x_1'(t)| + |x_2'(t)| \lesssim \|\partial_t \phi\|_{L^2}.
\end{equation}
Observe that \eqref{eq:lyap-schmidt-dtphi} and \eqref{eq:x'bound-2} yield
\begin{equation}
\label{eq:dtg-bound}
\|\partial_t g(t)\|_{L^2} \lesssim \|\partial_t \phi(t)\|_{L^2}.
\end{equation}

In order to prove \eqref{eq:en-bound}, we observe that \eqref{eq:g-to-0} and \eqref{eq:dist-to-infty} imply
\begin{equation}
\label{eq:thresh-energy}
E(\phi(t), \partial_t \phi(t)) = 2E_p(H).
\end{equation}
Indeed, applying Cauchy-Schwarz we have
\begin{equation}
\begin{aligned}
\bigg|\int_\bR \big(\partial_x (1 - H_1(t) + H_2(t) + g(t))\big)^2 \ud x - \int_\bR \big(\partial_x (1 - H_1(t) + H_2(t))\big)^2 \ud x\bigg| \\ \lesssim \|\partial_x g(t)\|_{L^2}\|\partial_x (1 - H_1(t) + H_2(t))\|_{L^2} + \|\partial_x g(t)\|_{L^2}^2 \to 0\ \text{as }t\ \to \infty.
\end{aligned}
\end{equation}
Plugging $w = 1 - H_1 + H_2$ in \eqref{eq:U-taylor} and integrating in $x$ we obtain
\begin{equation}
\begin{aligned}
\bigg|\int_\bR U(1 - H_1(t) + H_2(t) + g(t))\ud x - \int_{\bR}U(1 - H_1(t) + H_2(t)) \ud x\bigg| \\
\lesssim \|g\|_{L^2}\|U'(1 - H_1(t) + H_2(t))\|_{L^2} + \|g(t)\|_{L^2}^2 \to 0\ \text{as }t\ \to \infty,
\end{aligned}
\end{equation}
where in the last step we use boundedness of $\|U'(1 - H_1(t) + H_2(t))\|_{L^2}$, easy to justify by \eqref{eq:inter-bound-1}.

Since $\|\partial_t \phi(t)\|_{L^2}^2 \to 0$ as $t \to \infty$, from the last two estimates and \eqref{eq:Ep-limit} we deduce
\begin{equation}
E(\phi(t), \partial_t \phi(t)) = \lim_{t \to \infty} E(1 - H_1(t) + H_2(t) + g(t), \partial_t \phi(t)) = \lim_{t\to \infty}E_p(1 - H_1(t) + H_2(t)) = 2E_p(H).
\end{equation}

Plugging $w = 1 - H_1(t) + H_2(t)$ in \eqref{eq:U-taylor-2} we obtain
\begin{equation}
\label{eq:Ep-phi-expansion}
\begin{aligned}
E_p(\phi(t)) &= E_p(1 - H_1(t) + H_2(t)) + \la \vD E_p(1 - H_1(t) + H_2(t)), g\ra \\ &+ \frac 12 \la g(t), \vD^2 E_p(1 - H_1(t) + H_2(t))g(t)\ra
+ O(\|g\|_{L^2}^3).
\end{aligned}
\end{equation}
Applying Lemma~\ref{lem:D2H} we get
\begin{equation}
\label{energy aaa}
\begin{aligned}
\|g\|_{H^1}^2 + \|\partial_t \phi\|_{L^2}^2 &\lesssim E(\phi(t), \partial_t \phi(t)) - E_p(1 - H_1(t) + H_2(t)) \\&+ \|\vD E_p(1 - H_1(t) + H_2(t))\|_{L^2}\|g\|_{L^2}.
\end{aligned}
\end{equation}
By Lemma~\ref{lem:V-two-kink} and \eqref{eq:thresh-energy}, the right hand side is bounded up to a constant by
\begin{equation}
\eee^{-(x_2 - x_1)} + \|\vD E_p(1 - H_1(t) + H_2(t))\|_{L^2}\|g\|_{L^2}.
\end{equation}
From \eqref{eq:inter-bound-1} and Lemma~\ref{lem:exp-cross-term} we have $\|\vD E_p(1 - H_1(t) + H_2(t))\|_{L^2} \lesssim \sqrt{x_2 - x_1} \eee^{-(x_2 - x_1)}$, hence \eqref{eq:en-bound} follows.

Bound \eqref{eq:x'bound} follows from \eqref{eq:x'bound-2} and \eqref{eq:en-bound}.

In order to prove \eqref{eq:x''bound}, we differentiate \eqref{eq:x'bound1} and~\eqref{eq:x'bound2} in time.
For example, from~\eqref{eq:x'bound} we obtain,  
\EQ{
 0 & =  x_1''(t) \big({-}\|\partial_x H\|_{L^2}^2 - \la \partial_x^2 H_1(t), g(t)\ra\big)  + (x_1'(t))^2 \ang{ \p_x^3 H_1(t) , \, g(t)}  - x_1'(t) \ang{ \p_x^2 H_1(t), \, \p_t g(t) }  \\
 & \quad + x_2''(t)\ang{ \p_x H_1(t) , \, \p_x H_2(t)} - x_1'(t) x_2'(t) \ang{ \p_x^2 H_1(t) , \, \p_x H_2(t)}  - (x_2'(t))^2 \ang{ \p_x H_1(t) , \, \p_x^2 H_2(t)} \\
 & \quad - x_1'(t) \ang{ \p_x H_1(t), \, \p_t \phi(t)} + \ang{ \p_x H_1(t), \, \p_t^2 \phi(t)} 
}
Rearranging, and using~\eqref{eq:lyap-schmidt-dtphi} we obtain, 
\EQ{
x_1''(t) &\big({-}\|\partial_x H\|_{L^2}^2 - \la \partial_x^2 H_1(t), g(t)\ra\big) + x_2''(t)\ang{ \p_x H_1(t) , \, \p_x H_2(t)}  \\
& = (x_1'(t))^2 \ang{ \p_x^3 H_1(t) , \, g(t)} - x_1'(t)  \ang{ \p_x^2 H_1(t), \, \p_t \phi(t)  }  + (x_1'(t))^2 \ang{ \p_x^2 H_1(t) , \p_x H_1(t)}  \\
& \quad - 2x_1'(t) x_2'(t) \ang{ \p_x^2 H_1(t) , \p_x H_2(t)}    - (x_2'(t))^2 \ang{ \p_x H_1(t) , \, \p_x^2 H_2(t)}  - x_1'(t) \ang{ \p_x H_1(t), \, \p_t \phi(t)}  \\
&\quad + \ang{ \p_x H_1(t), \, \p_t^2 \phi(t)} 
}
After similarly differentiating~\eqref{eq:x'bound2} it is clear that we obtain again a diagonally dominant linear system for $x_1''(t)$ and $x_2''(t)$.
Almost all of  the terms on the right-hand side are easily seen to be $\lesssim \eee^{-(x_2(t) - x_1(t))}$,
because they are at least quadratic with respect to $(g, \partial_t \phi, x_1', x_2')$.
The only potentially problematic term is
\begin{equation}
\label{eq:problematic}
\la \partial_x H_1(t), \partial_t^2 \phi(t)\ra = \la \partial_x H_1(t), \partial_x^2 \phi(t) - U'(\phi(t))\ra.
\end{equation}
However, this term is also $\lesssim \eee^{-(x_2(t) - x_1(t))}$, due to the fact that
$\partial_x H \in \ker(-\partial_x^2 + U''(H))$.
Indeed, we have
\begin{equation}
\partial_x^2 \phi(t) - U'(\phi(t)) = \partial_x^2 g - U''(H_1)g + \Phi(x_1, x_2, \cdot) - \big(U'(1 - H_1 + H_2 + g)-U'(1 - H_1 + H_2) - U''(H_1)g\big),
\end{equation}
and we observe that
\begin{gather}
\la \partial_x H_1, \partial_x^2 g - U''(H_1)g\ra = 0, \\
\int_{\bR}|\partial_x H_1(x)\Phi(x_1, x_2, x)| \ud x \lesssim \int_\bR \eee^{{-|x-x_1|}}\eee^{-(x - x_1)_+}\eee^{-(x_2 - x)_+}\ud x \lesssim \eee^{-(x_2 - x_1)}, \\
\end{gather}
so we are left with the last term. From \eqref{eq:U'-taylor} we have
\begin{equation}
\|U'(1 - H_1 + H_2 + g)-U'(1 - H_1 + H_2) - U''(1 - H_1 + H_2)g\|_{L^2} \lesssim \|g\|_{L^2}^2 \lesssim \eee^{-(x_2 - x_1)}.
\end{equation}
From \eqref{eq:inter-bound-2} it follows that
\begin{equation}
\label{eq:proj-on-dxH1}
\begin{aligned}
\int_{\bR} |\partial_x H_1||U''(1 - H_1 + H_2) - U''(H_1)||g|\ud x \lesssim \|g\|_{L^2}\|(\partial_x H_1)(U''(1 - H_1 + H_2) - U''(H_1))\|_{L^2} \\
\lesssim \|g\|_{L^2}\sqrt{x_2 - x_1}\eee^{-(x_2 - x_1)} \ll \eee^{-(x_2 - x_1)}.
\end{aligned}
\end{equation}
Combining the two estimates yields the conclusion.
\end{proof}
The rest of this section closely follows the corresponding arguments in \cite{jendrej2018dynamics}. 

\begin{lemma}
\label{lem:virial}
For any $M > 0$ there exists a constant $C > 0$ such that for any functions $\wt \chi$, $w$ and
$g$ such that $\|w\|_{H^1} \leq M$, $\|\wt \chi\|_{{W^{1,\infty}}} < \infty$
and $\|g\|_{H^1} \leq 1$ the following inequality is true:
\begin{equation}
\label{eq:virial}
\begin{aligned}
\bigg|\int_\bR \wt \chi\,\partial_x g\big(U'(w + g) - U'(w)\big)\ud x
 + \int_\bR \wt \chi\,\partial_x w\big(U'(w+ g) - U'(w) - U''(w) g\big)\ud x \bigg| \\
 \leq C\|\partial_x \wt \chi\|_{L^\infty}\|g\|_{H^1}^2.
\end{aligned}
\end{equation}
\end{lemma}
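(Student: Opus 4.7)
The plan is to recognize the integrand in \eqref{eq:virial} as an exact $x$-derivative, then integrate by parts against $\wt\chi$ so that the derivative falls on $\wt\chi$ itself, producing the advertised factor $\|\partial_x\wt\chi\|_{L^\infty}$. The key algebraic observation is to introduce the second-order Taylor remainder
\[
V(w, g) := U(w+g) - U(w) - U'(w) g,
\]
viewed as a smooth function of two real variables. A direct computation gives
\[
\partial_g V(w, g) = U'(w+g) - U'(w), \qquad \partial_w V(w, g) = U'(w+g) - U'(w) - U''(w) g,
\]
so that by the chain rule, with $w = w(x)$ and $g = g(x)$,
\[
\partial_x\bigl[V(w(x), g(x))\bigr] = \partial_x g\,\bigl(U'(w+g) - U'(w)\bigr) + \partial_x w\,\bigl(U'(w+g) - U'(w) - U''(w) g\bigr).
\]
This is precisely the sum that multiplies $\wt\chi$ inside the modulus in \eqref{eq:virial}.

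Next, I would justify that $V(w, g)$ belongs to $W^{1,1}(\bR)$ so that integration by parts is legitimate with vanishing boundary terms. Since we work on $\bR$, the Sobolev embedding $H^1(\bR) \hookrightarrow L^\infty(\bR)$ gives $\|w\|_{L^\infty} \lesssim M$ and $\|g\|_{L^\infty} \lesssim 1$; as $U \in C^\infty$, the remainder bound
\[
|V(w, g)| \leq C(M)\, g^2
\]
holds pointwise (this is a bounded-region version of \eqref{eq:U-taylor-2}). Hence $V(w, g) \in L^1$ with $\|V(w,g)\|_{L^1} \lesssim_M \|g\|_{L^2}^2$, and the representation of $\partial_x V$ above together with $|\partial_g V| \lesssim_M |g|$, $|\partial_w V| \lesssim_M g^2$ gives $\partial_x V(w,g) \in L^1$ as well. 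Therefore $V(w,g) \in W^{1,1}(\bR)$, which forces $V(w,g)$ to vanish at $\pm \infty$, and we may integrate by parts.

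Integrating by parts then yields
\[
\int_\bR \wt\chi\,\partial_x\bigl[V(w, g)\bigr] \ud x = -\int_\bR \partial_x\wt\chi\,V(w, g)\ud x,
\]
so that the left-hand side of \eqref{eq:virial} is bounded by
\[
\|\partial_x\wt\chi\|_{L^\infty} \int_\bR |V(w, g)|\ud x \leq C(M)\,\|\partial_x \wt\chi\|_{L^\infty}\,\|g\|_{L^2}^2 \leq C(M)\,\|\partial_x \wt\chi\|_{L^\infty}\,\|g\|_{H^1}^2,
\]
which is the claimed estimate with $C = C(M)$. There is no genuine obstacle here: the whole point of the lemma is the algebraic identity that packages the two apparently unrelated terms into a single $x$-derivative, and once that is noticed the remaining analytic steps are routine applications of Sobolev embedding in one dimension and the smoothness of $U$.
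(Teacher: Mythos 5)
Your proof is correct and follows essentially the same route as the paper: both identify the integrand as the exact derivative $\partial_x\big(U(w+g)-U(w)-U'(w)g\big)$ and integrate by parts onto $\wt\chi$, bounding the Taylor remainder pointwise by $C(M)g^2$ via the $H^1(\bR)\hookrightarrow L^\infty(\bR)$ embedding. The only cosmetic difference is that the paper justifies the integration by parts by first reducing to $w,g\in C_0^\infty(\bR)$ by density, whereas you verify directly that the remainder lies in $W^{1,1}(\bR)$; both are fine.
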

\begin{proof}
By the standard approximation procedure,
we can assume that $w, g \in C_0^\infty(\bR)$.

Consider the first line in \eqref{eq:virial}. Rearranging the terms, we obtain
\begin{equation}
\begin{aligned}
&\int_\bR \wt \chi\Big(\partial_x g\big(U'(w+ g) - U'(w)\big)
+ \partial_x w\big(U'(w+ g) - U'(w) - U''(w) g\big)\Big)\ud x \\
= &\int_\bR \wt\chi\Big(\partial_x (w + g)U'(w + g) - \partial_x w\, U'(w)
 - \big(\partial_x g\, U'(w) + g\, \partial_x w\, U''(w)\big)\Big)\ud x \\
 =& \int_\bR \wt\chi\, \partial_x\big(U(w + g) - U(w) - U'(w) g\big)\ud x,
\end{aligned}
\end{equation}
and we can integrate by parts.
\end{proof}

%
Recall that $\chi \in C^\infty$ is a decreasing function such that $\chi(x) = 1$ for $x \leq \frac 13$
and $\chi(x) = 0$ for $x \geq \frac 23$. We define
\begin{equation}
\label{eq:mod-cutoff-def}
\chi_1(t, x) := \chi\Big(\frac{x - x_1(t)}{x_2(t) - x_1(t)}\Big),\quad \chi_2 := 1 - \chi_1.
\end{equation}
Now the analysis is based on an ad-hoc change of unknowns in the modulation equations in order to remove some terms of low order.
We consider the following continuous real-valued functions:
\begin{equation}
\label{eq:impulsion-def}
\begin{aligned}
p_1(t) &:= \|\partial_x H\|_{L^2}^{-2}\la \partial_x(H_1(t) - \chi_1(t) g(t)), \partial_t \phi\ra, \\
p_2(t) &:= \|\partial_x H\|_{L^2}^{-2}\la -\partial_x(H_2(t) + \chi_2(t) g(t)), \partial_t \phi(t)\ra.
\end{aligned}
\end{equation}
\begin{lemma}
\label{lem:norm-form}
If $\phi$ is a strongly interacting kink-antikink pair, then
there exist $C, T_0 > 0$ such that $p_j \in C^1([T_0, \infty))$ and for all $t \geq T_0$
\begin{align}
\big|x_j'(t) - p_j(t)\big| &\leq C\eee^{-(x_2(t) - x_1(t))}, \label{eq:x'-p} \\
\big|p_j'(t) + (-1)^j F(x_2(t) - x_1(t))\big| &\leq C(x_2(t) - x_1(t))^{-1}\eee^{-(x_2(t) - x_1(t))}, \label{eq:p'-force}
\end{align}
where $F$ is defined by \eqref{eq:force-def}.
\end{lemma}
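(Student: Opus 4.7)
My plan is to handle \eqref{eq:x'-p} and \eqref{eq:p'-force} separately, treating $j=1$ in detail (the case $j=2$ being symmetric with an overall sign change, since $p_2$ is built from $-\partial_x(H_2 + \chi_2 g)$).

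First I would derive \eqref{eq:x'-p} by showing that $p_1(t)$ is essentially an algebraic rearrangement of $x_1'(t)$. Solving the identity \eqref{eq:x'bound1} (which came from differentiating the first orthogonality condition in \eqref{eq:g-orth}) for $\langle \partial_x H_1, \partial_t \phi\rangle$ and substituting into the definition \eqref{eq:impulsion-def} of $p_1$ yields
\[
x_1'(t) - p_1(t) = \|\partial_x H\|_{L^2}^{-2}\Big[x_1'\langle \partial_x^2 H_1, g\rangle - x_2'\langle \partial_x H_1, \partial_x H_2\rangle + \langle \partial_x(\chi_1 g), \partial_t \phi\rangle\Big].
\]
Each of the three bracketed terms is $O(e^{-z(t)})$: the first two use $|x_k'| \lesssim e^{-z/2}$ and $\|g\|_{L^2} \lesssim e^{-z/2}$ from Lemma~\ref{lem:en-bound}, together with the cross-kink bound $|\langle \partial_x H_1, \partial_x H_2\rangle| \lesssim z\,e^{-z}$ from Lemmas~\ref{lem:inter-bound} and~\ref{lem:exp-cross-term}; the third uses $\|\partial_x(\chi_1 g)\|_{L^2} \lesssim \|g\|_{H^1}$ and $\|\partial_t \phi\|_{L^2} \lesssim e^{-z/2}$. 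The same identity shows $p_j \in C^1$ since $x_j' \in C^1$ by assumption.

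Next I would address \eqref{eq:p'-force} by differentiating $p_1(t)$ in time and using \eqref{eq:phi4} to replace $\partial_t^2 \phi$. The crucial algebraic decomposition is
\[
\partial_x^2 \phi - U'(\phi) = \Phi(x_1, x_2, \cdot) - L_X g - R,
\]
where $L_X = -\partial_x^2 + U''(1-H_1+H_2)$ and $R$ is the quadratic Taylor remainder from \eqref{eq:U'-taylor}, satisfying $|R| \lesssim g^2$ pointwise. Pairing $\partial_x H_1$ against $\Phi$ produces exactly $\|\partial_x H\|_{L^2}^2 F(z)$ by definition \eqref{eq:force-def}, which is the desired main-order force. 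The linear-in-$g$ contribution $\langle \partial_x H_1, L_X g\rangle$ is controlled by decomposing $L_X = L_1 + V_1$ with $L_1 \partial_x H_1 = 0$ and $V_1 = U''(1-H_1+H_2) - U''(H_1)$ small on the support of $\partial_x H_1$ by \eqref{eq:inter-bound-2}, producing an error of order $\sqrt z\, e^{-3z/2}$. The time-derivative piece and the terms pairing $\partial_x(\chi_1 g)$ against $\Phi$ and $L_X g$ are similarly controlled by Lemmas~\ref{lem:en-bound} and~\ref{lem:inter-bound}, using the key cancellations $\int \partial_x^2 H_1\,\partial_x H_1\, \ud x = 0$ and the fact that $\|\partial_t \chi_1\|_{L^\infty} \lesssim z^{-1}(|x_1'| + |x_2'|)$.

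The main obstacle is the quadratic remainder $\int \partial_x H_1 \cdot R\, \ud x$, which admits only the crude bound $\lesssim \|g\|_{L^2}^2 \lesssim e^{-z}$---the same size as the force itself, and hence insufficient for \eqref{eq:p'-force}. This is precisely why the correction $-\partial_x(\chi_1 g)$ appears in the definition \eqref{eq:impulsion-def}: grouping the companion term $-\int \partial_x(\chi_1 g) \cdot R\, \ud x$ together with the nonlinear contribution from $\partial_x H_1$ produces a combination of exactly the form required by the virial-type identity of Lemma~\ref{lem:virial}, applied with $\wt \chi = \chi_1$ and $w = 1 - H_1 + H_2$. The resulting bound $\lesssim \|\partial_x \chi_1\|_{L^\infty} \|g\|_{H^1}^2 \lesssim z^{-1} e^{-z}$ produces the extra factor $z^{-1}$ asserted in \eqref{eq:p'-force} and constitutes the core mechanism of the argument; without the $\chi_1 g$ correction one could at best hope for $p_1' = F(z) + O(e^{-z})$, which would be useless for dynamical bookkeeping.
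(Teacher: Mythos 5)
Your treatment of \eqref{eq:x'-p} (solving \eqref{eq:x'bound1} for $\la \partial_x H_1,\partial_t\phi\ra$ and inserting it into \eqref{eq:impulsion-def}) and your identification of the virial cancellation via Lemma~\ref{lem:virial} for the quadratic remainder both match the paper's argument. However, there is a genuine gap in your handling of what you call the ``time-derivative piece'' of $p_1'$, namely $\la \partial_t\partial_x(H_1-\chi_1 g),\partial_t\phi\ra$. Writing $z=x_2(t)-x_1(t)$, the term $I:=\la\partial_t\partial_x H_1,\partial_t\phi\ra=-x_1'\la\partial_x^2 H_1,\partial_t\phi\ra$ is of size $\eee^{-z}$, i.e.\ exactly the size of $F(z)$, and the tools you cite do not reduce it below that: inserting \eqref{eq:lyap-schmidt-dtphi} and using $\la\partial_x^2 H_1,\partial_x H_1\ra=0$ still leaves $-x_1'\la\partial_x^2 H_1,\partial_t g\ra$, and at this stage the only available information is $\|\partial_t g\|_{L^2}\lesssim\|\partial_t\phi\|_{L^2}\lesssim\eee^{-z/2}$ (Lemma~\ref{lem:en-bound} and \eqref{eq:dtg-bound}); there is no orthogonality of $\partial_t g$ against $\partial_x^2 H_1$, so this product is genuinely $O(\eee^{-z})$, not $O(z^{-1}\eee^{-z})$. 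The same is true of $III:=-\la\partial_x(\chi_1\partial_t g),\partial_t\phi\ra$. The correction $\chi_1 g$ in \eqref{eq:impulsion-def} thus serves \emph{two} purposes, and you have identified only one: besides enabling the virial cancellation, it produces the kinetic cancellation $I+III\simeq 0$. In the paper one substitutes $\partial_x\partial_t g=\partial_x\partial_t\phi-x_1'\partial_x^2 H_1+x_2'\partial_x^2 H_2$ into $III$; the piece $-\la\chi_1\partial_x\partial_t\phi,\partial_t\phi\ra$ is a perfect $x$-derivative and becomes negligible after integrating by parts against $\partial_x\chi_1$, the $H_2$ piece is exponentially small on the support of $\chi_1$, and the remaining piece $x_1'\la\chi_1\partial_x^2 H_1,\partial_t\phi\ra$ cancels $I$ up to negligible terms. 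Without this pairing your bound \eqref{eq:p'-force} degrades to $O(\eee^{-z})$, which, as you yourself observe for the quadratic term, is useless for the dynamical analysis.

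Two secondary points. First, the claim that $p_j\in C^1$ ``since $x_j'\in C^1$'' is too quick: differentiating $\la\partial_x(\chi_1 g),\partial_t\phi\ra$ requires pairing with $\partial_t^2\phi$, which for an energy-class solution is only distributional; the paper proves the estimates for smooth solutions and passes to the limit by approximation. Second, to apply Lemma~\ref{lem:virial} the pairing with $\chi_1\partial_x g$ must involve the full difference $U'(w+g)-U'(w)$, i.e.\ the $U''(w)g$ portion of the $L_X g$ contribution together with your remainder $R$; grouping only $R$ with the $\partial_x H_1$-projection, as written, does not match the form of the lemma. Both of these are repairable, but the missing $I+III$ cancellation is an essential omission.
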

\begin{proof}
We can assume that $\phi$ is smooth, so that $p_j$ are of class $C^1$.
We obtain the general case by approximating a finite energy solution with smooth ones and passing to the limit.

We prove the inequalities for $j = 1$, the arguments for $j = 2$ being analogous.

Using the estimates obtained in Lemma~\ref{lem:en-bound}, \eqref{eq:x'bound1} yields
\begin{equation}
\big| x_1'(t) - \|\partial_x H\|_{L^2}^{-2}\la \partial_x H_1(t), \partial_t \phi(t)\ra\big| \lesssim \eee^{-(x_2(t) - x_1(t))}.
\end{equation}
We also have
\begin{equation}
|\la \partial_x(\chi_1(t) g(t)), \partial_t \phi(t)\ra| \lesssim (\|g(t)\|_{L^2} + \|\partial_x g(t)\|_{L^2})\|\partial_t \phi(t)\|_{L^2}
\lesssim \eee^{-(x_2(t) - x_1(t))},
\end{equation}
so \eqref{eq:x'-p} is proved.

We have
\begin{equation}
\label{eq:p1'}
\begin{aligned}
\|\partial_x H\|_{L^2}^2 p_1'(t) &= \la \partial_t (\partial_x H_1(t)), \partial_t \phi(t)\ra - \la \partial_x(g(t)\partial_t \chi_1(t)), \partial_t \phi(t)\ra - \la \partial_x(\chi_1(t)\partial_t g(t)), \partial_t \phi(t)\ra \\
&+ \la \partial_x H_1(t), \partial_t^2 \phi(t)\ra - \la \partial_x \chi_1(t) g(t), \partial_t^2 \phi(t)\ra - \la\chi_1(t)\partial_x g(t), \partial_t^2 \phi(t)\ra \\
&= I + II + III + IV + V + VI,
\end{aligned}
\end{equation}
and we will estimate each term one by one. Until the end of this proof, we will say that some quantity is negligible
if it is $\lesssim (x_2 - x_1)^{-1}\eee^{-(x_2 - x_1)}$, and we use the symbol $\simeq$ for equalities up to negligible quantities.

By the Chain Rule we have
\begin{equation}
\label{eq:dt-chi1}
\partial_t \chi_1(t, x) = \frac{-x_1'(t)(x_2(t) - x_1(t)) - (x - x_1(t))(x_2'(t) - x_1'(t))}{(x_2(t) - x_1(t))^2}\partial_x
\chi\Big(\frac{x - x_1(t)}{x_2(t) - x_1(t)}\Big),
\end{equation}
which yields, using \eqref{eq:x'bound}, $\|\partial_t \chi_1(t)\|_{L^\infty} \lesssim (x_2(t) - x_1(t))^{-1}\eee^{-\frac 12(x_2(t) - x_1(t))}$.
Thus $II \simeq 0$. Using \eqref{eq:dtg-bound} and
\begin{equation}
\label{eq:dxchi1-bound}
\|\partial_x \chi_1(t)\|_{L^\infty} \lesssim (x_2(t) - x_1(t))^{-1}
\end{equation}
we obtain
\begin{equation}
\begin{aligned}
III &\simeq -\la \chi_1(t)\partial_x \partial_t g(t), \partial_t \phi(t)\ra \\
&= -\la \chi_1(t) \partial_x \partial_t \phi(t), \partial_t \phi(t)\ra
+ x_1'(t) \la \chi_1(t) \partial_x^2 H_1(t), \partial_t \phi(t)\ra - x_2'(t) \la \chi_1(t) \partial_x^2 H_2(t), \partial_t \phi(t)\ra
\end{aligned}
\end{equation}
Integrating by parts and using again \eqref{eq:dxchi1-bound}, we see that the first term of the second line is negligible.
The last term is negligible as well, because $\partial_x^2 H_2(t)$ is (exponentially) small on the support of $\chi_1(t)$.
For a similar reason, we can remove $\chi_1(t)$ from the second term, and obtain
\begin{equation}
III \simeq x_1'(t) \la \partial_x^2 H_1(t), \partial_t \phi(t)\ra = - I,
\end{equation}
in other words we have $I + II + III \simeq 0$.

In order to estimate the remaining three terms, we write
\begin{equation}
\partial_t^2\phi = \partial_x^2 \phi(t) - U'(\phi(t)) = \Phi(x_1, x_2, \cdot) +\partial_x^2 g- \big(U'(1 - H_1 + H_2 + g)-U'(1 - H_1 + H_2)\big).
\end{equation}
In particular, examining the contribution of each term on the right above, and using~\eqref{eq:inter-bound-1},  \eqref{eq:en-bound}, \eqref{eq:dxchi1-bound} and the fact that $U'$ is locally Lipschitz we see that
the term $V$ is negligible.
Consider the term $VI$. Integrating by parts, we see that $\la \chi_1\partial_x g, \partial_x^2 g\ra$ is negligible.
By \eqref{eq:inter-bound-1} and Cauchy-Schwarz, $\la \chi_1\partial_x g, \Phi(x_1, x_2, \cdot)\ra$ is negligible as well.
Hence, by Lemma~\ref{lem:virial} with $\wt\chi = \chi_1$ and $w = 1 - H_1 + H_2$, and using again that $\p_x H_1(t)$ is exponentially small outside the support of $\chi_1(t)$ we have
\begin{equation}
\label{eq:VI}
\begin{aligned}
VI &\simeq \int_{\bR}\chi_1(\partial_x H_1 - \partial_x H_2)\big(U'(1 - H_1 + H_2 + g) - U'(1 - H_1 + H_2) - U''(1 - H_1 + H_2)g\big)\ud x \\
&\simeq {\int_{\R} } \partial_x H_1\big(U'(1 - H_1 + H_2 + g) - U'(1 - H_1 + H_2) - U''(1 - H_1 + H_2)g\big)\ud x \\
&\simeq{\int_{\R}} \partial_x H_1\big(U'(1 - H_1 + H_2 + g) - U'(1 - H_1 + H_2) - U''(H_1)g\big)\ud x.
\end{aligned}
\end{equation}
We already encountered the term $IV$, see \eqref{eq:problematic}, where we obtained
\begin{equation}
IV = \la \partial_x H_1, \Phi(x_1, x_2, \cdot)\ra - \big\la \partial_x H_1, \big(U'(1 - H_1 + H_2 + g) - U'(1 - H_1 + H_2) - U''(H_1)g\big)\big\ra.
\end{equation}
The last term cancels with the term $VI$ and, recalling the definition of $F$, we get \eqref{eq:p'-force}.
\end{proof}
\begin{proposition}
\label{prop:regime}
Let  $A$ be the constant defined  by \eqref{def AAA}.
If $\phi$ is a strongly interacting kink-antikink pair,
then there exist $C, T_0 > 0$ (depending on $\phi$) such that for all $t \geq T_0$
\begin{gather}
\label{eq:regime-1} 2 t^{-1} - C(t\log t)^{-1} \leq x_2'(t) - x_1'(t) \leq 2t^{-1} + C(t\log t)^{-1}, \\
\label{eq:regime-2} 2\log(At) - C(\log t)^{-1} \leq x_2(t) - x_1(t) \leq 2\log(At) + C(\log t)^{-1}, \\
\label{eq:regime-3} \|g(t)\|_{H^1} + \|\partial_t g(t) \|_{L^2} \leq Ct^{-1}(\log t)^{-1/2}.
\end{gather}
\end{proposition}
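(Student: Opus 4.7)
I would reduce Proposition~\ref{prop:regime} to the analysis of a second-order ODE for the kink separation $z(t) := x_2(t) - x_1(t)$, analyze it via an approximately conserved ``reduced energy,'' and then bootstrap back to the sharp bound on $g$ using the threshold identity $E(\phi, \partial_t \phi) = 2E_p(H)$. Setting $q(t) := p_2(t) - p_1(t)$ and subtracting the two assertions of Lemma~\ref{lem:norm-form} gives
\[
z'(t) = q(t) + O(e^{-z(t)}), \qquad q'(t) = -2F(z(t)) + O\big(z(t)^{-1} e^{-z(t)}\big).
\]
From \eqref{eq:k-def}--\eqref{def AAA} and the normalization $U''(1)=1$ one checks $A^2 \|\partial_x H\|_{L^2}^2 = 2\kappa^2$, so Lemma~\ref{lem:Fz} reads $F(z) = A^2 e^{-z} + O(z e^{-2z})$. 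I then introduce $V(z) := -2\int_z^\infty F(y)\,dy = -2A^2 e^{-z} + O(z e^{-2z})$ and the reduced energy
\[
\mathcal{E}(t) := \tfrac12 q(t)^2 + V(z(t)).
\]
Using $|q(t)| \lesssim e^{-z(t)/2}$ (from \eqref{eq:x'bound} and \eqref{eq:x'-p}), a direct differentiation gives $|\mathcal{E}'(t)| \lesssim z(t)^{-1} e^{-3z(t)/2}$; since $z(t)\to\infty$ both $V(z(t))$ and $q(t)$ vanish, so $\mathcal{E}(t)\to 0$.

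For the leading asymptotics I exploit $q^2 = -2V(z) + 2\mathcal{E} = 4A^2 e^{-z}(1+o(1))$, which forces $q(t)>0$ (the positive root consistent with $z\to\infty$) and yields $z'(t) = 2A e^{-z(t)/2}(1+o(1))$. Substituting $u = e^{z/2}$ converts this to $u' = A + o(1)$, so $e^{z(t)/2} = At(1+o(1))$ and $e^{-z(t)} \simeq (At)^{-2}$. Feeding this rough rate back into the bound on $\mathcal{E}'$ gives
\[
|\mathcal{E}(t)| \leq \int_t^\infty |\mathcal{E}'(s)|\,ds \lesssim \int_t^\infty s^{-3}(\log s)^{-1}\,ds \lesssim \frac{1}{t^2 \log t}.
\]
Consequently $q(t)^2 = 4A^2 e^{-z(t)}(1+O((\log t)^{-1}))$, and integrating $(e^{z/2})' = A(1+O((\log t)^{-1}))$ yields $z(t) = 2\log(At) + O((\log t)^{-1})$, which is \eqref{eq:regime-2}; substituting back gives \eqref{eq:regime-1}.

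For the sharper bound \eqref{eq:regime-3} on $g$ I would revisit the threshold identity $E(\phi, \partial_t \phi) = 2E_p(H)$, expanding $\|\partial_t \phi\|_{L^2}^2$ via \eqref{eq:lyap-schmidt-dtphi} (and using \eqref{eq:x'bound1}--\eqref{eq:x'bound2} to show $\la \partial_x H_j, \partial_t g\ra = O(e^{-z})$) and $E_p(\phi)$ via Lemma~\ref{lem:V-two-kink} together with a Taylor expansion around $1 - H_1 + H_2$, both to order $e^{-z}$. Crucially, $(p_1+p_2)' = O(z^{-1}e^{-z})$ and $(p_1+p_2)(t)\to 0$ yield $|x_1'+x_2'| \lesssim (t^2\log t)^{-1}$, so $(x_1')^2+(x_2')^2 = \tfrac12(z')^2 + o(e^{-z})$, and the kinetic term $\tfrac12((x_1')^2+(x_2')^2)\|\partial_x H\|_{L^2}^2$ produces a leading $+2\kappa^2 e^{-z}$ that exactly cancels the $-2\kappa^2 e^{-z}$ in $E_p(1-H_1+H_2) - 2E_p(H)$. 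What survives is
\[
\tfrac12 \|\partial_x H\|_{L^2}^2\, \mathcal{E}(t) + \tfrac12 \|\partial_t g\|_{L^2}^2 + \tfrac12 \la g, L_X g\ra + \la \vD E_p(1-H_1+H_2), g\ra + O(z e^{-2z} + \|g\|_{L^2}^3) = 0.
\]
The cross term $\la \vD E_p(1-H_1+H_2), g\ra = -\la \Phi, g\ra$ is controlled by removing the $\spn(\partial_x H_1,\partial_x H_2)$-component of $\Phi$ using the orthogonality \eqref{eq:g-orth}; the remaining piece has $L^2$-norm $O(\sqrt{z}\,e^{-z})$ by Lemma~\ref{lem:inter-bound} together with Lemma~\ref{lem:exp-cross-term}, and Young's inequality absorbs it. The coercivity Lemma~\ref{lem:D2H} then gives $\|g\|_{H^1}^2 + \|\partial_t g\|_{L^2}^2 \lesssim |\mathcal{E}(t)| + z e^{-2z} \lesssim (t^2\log t)^{-1}$, as required.

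The principal obstacle I expect is orchestrating this last cancellation: the leading kinetic energy of the relative motion $z'$ and the leading attractive interaction $E_p(1-H_1+H_2) - 2E_p(H)$ both live at the critical order $e^{-z}\simeq t^{-2}$, and the control on $g$ only becomes visible at the next order once they are eliminated exactly. This is precisely why the identification of the sharp constant $A$ in Lemma~\ref{lem:Fz} (rather than merely the exponential rate) is essential in the ODE step: the same constant $2\kappa^2$ must appear on both sides of the cancellation.
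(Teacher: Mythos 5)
Your overall architecture (modulation equations for $p_j$, an ODE analysis of the separation $z=x_2-x_1$, then the threshold energy identity to squeeze out $g$) matches the paper's, but there is a genuine gap in the ODE step. You define $\mathcal{E}(t)=\tfrac12 q(t)^2+V(z(t))$ and argue: $\mathcal{E}(t)\to 0$, hence $q^2=-2V(z)+2\mathcal{E}=4A^2e^{-z}(1+o(1))$. This does not follow: since $e^{-z(t)}\to 0$ as well, the statement $q^2=4A^2e^{-z}(1+o(1))$ requires $\mathcal{E}(t)=o(e^{-z(t)})$, which is strictly stronger than $\mathcal{E}(t)\to 0$ (a priori one only has $|\mathcal{E}|\lesssim e^{-z}$ from $|q|\lesssim e^{-z/2}$ and $|V(z)|\lesssim e^{-z}$). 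Your attempt to upgrade this via $|\mathcal{E}(t)|\le\int_t^\infty|\mathcal{E}'(s)|\,ds\lesssim\int_t^\infty z(s)^{-1}e^{-3z(s)/2}\,ds$ is circular: to evaluate that integral you substitute $e^{-z(s)}\simeq(As)^{-2}$, which is exactly the decay rate you are in the process of deriving. At this stage the only quantitative information is the one-sided bound $e^{-z(t)}\gtrsim t^{-2}$ (from $|z'|\lesssim e^{-z/2}$), which goes the wrong way for integrability; if $z$ grew very slowly the integral could even diverge. Relatedly, you never establish that $z$ is eventually monotone, which is needed both to select the sign of $q$ consistently and to convert $\int z^{-1}e^{-z}|z'|\,ds$ into a boundary term. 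The paper closes precisely this gap with two ODE comparison arguments: first a contradiction argument showing $z$ is strictly increasing for large $t$, then a barrier/sup argument for $r(t):=p(t)-2Ae^{-z(t)/2}$ showing $|r|\lesssim z^{-1}e^{-z/2}$, which is the quantitative substitute for your claim $\mathcal{E}=o(e^{-z})$ (note $\mathcal{E}\approx 2Ae^{-z/2}\,r$ to leading order). Without something of this kind your bootstrap has no valid starting point.

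The second half of your argument, for \eqref{eq:regime-3}, is essentially sound and close to the paper's: the key cancellation of the $2\kappa^2e^{-z}$ interaction energy against the kinetic energy of the relative motion, using $A^2\|\partial_x H\|_{L^2}^2=2\kappa^2$, is exactly what the paper does. Two small remarks: your claimed bound $|x_1'+x_2'|\lesssim(t^2\log t)^{-1}$ should read $(t\log t)^{-1}$ (integrating $(p_1+p_2)'=O(s^{-2}(\log s)^{-1})$ from $t$ to $\infty$ gives $t^{-1}(\log t)^{-1}$), which still suffices since only $(x_1'+x_2')^2=o(t^{-2}(\log t)^{-1})$ is needed; and the paper avoids controlling $x_1'+x_2'$ altogether by using the one-sided inequality $(x_1')^2+(x_2')^2\ge\tfrac12(x_2'-x_1')^2$ together with the upper bound on the total energy, which is slightly more economical than your exact identity.
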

\begin{remark}
The estimates given in Theorem~\ref{thm:main} are stronger. However, proving the preliminary bounds above
is crucial for our proof of Theorem~\ref{thm:main} given in the next section.
The fact that the distance between the kinks is estimated with precision $(\log t)^{-1}$ is not crucial.
In order for the arguments in the next section to work, this could be any function converging to $0$ as $t \to \infty$.

\end{remark}
\begin{proof}
Set $z(t) := x_2(t) - x_1(t)$ and $p(t) := p_2(t) - p_1(t)$.
Lemma~\ref{lem:norm-form} together with Lemma~\ref{lem:Fz} yield, 
\begin{equation}
\label{eq:mod-for-diff}
|z'(t) - p(t)| \lesssim \eee^{-z(t)}, \qquad \big|p'(t) + 2A^2\eee^{-z(t)}\big| \lesssim z(t)^{-1}\eee^{-z(t)}.
\end{equation}
By assumption \eqref{eq:dist-to-infty-0} $\lim_{t\to \infty} z(t)=\infty$. We claim that for $t$ large enough $z(t)$ is a strictly increasing function.
Let $t_1 \geq T_0$, where $T_0$ is large (chosen later in the proof).
We need to show that for all $t > t_1$ we have $z(t) > z(t_1)$.
Suppose this is not the case, and let
\begin{equation}
t_2 := \sup\big\{t: z(t) = \inf_{\tau \geq t_1}z(\tau)\big\}.
\end{equation}
Then $t_2 > t_1$ is finite, $z(t_2) = \inf_{t_1 \le \tau \leq t_2}z(\tau)$ and $z'(t_2) = 0$.

Let $z_0 := z(t_2)$, $t_3 := \inf\{t \geq t_2: z(t) = z_0 + 1\}$.
Since $\lim_{t \to \infty}z(t) = \infty$, $t_3$ is finite.
We will show that the inequalities \eqref{eq:mod-for-diff} imply
\begin{equation}
\label{eq:q-mono-0}
z(t_3) \leq z_0 + \frac 12,
\end{equation}
which is a contradiction. Note that $z_0 \leq z(t) \leq z_0 + 1$ for $t \in [t_2, t_3]$, in particular $\eee^{-z(t)} \simeq \eee^{-z_0}$.

From \eqref{eq:mod-for-diff} we have
\begin{equation}
\label{eq:z-mono-1}
D_+ p(t) \leq -A^2\eee^{-z(t)} \leq -A^2\eee^{-z_0 - 1} = -\frac{A^2}{\eee} \eee^{-z_0}, \qquad\text{for all }t \in [t_2, t_3].
\end{equation}
Since $p(t_2)=p(t_2)-z'(t_2)\leq C\,\eee^{-z(t_2)} = C\,\eee^{-z_0}$, we get
\begin{equation}
\label{eq:q-mono-2}
p(t) \leq C\,\eee^{-z_0} - \frac{A^2}{\eee}(t - t_2)\eee^{-z_0}, \qquad\text{for all }t\in [t_2, t_3].
\end{equation}
Using \eqref{eq:mod-for-diff} again we obtain
\begin{equation}
z'(t) \leq C\,\eee^{-z_0} - \frac{A^2}{\eee}(t - t_2)\eee^{-z_0}, \qquad\text{for all }t\in [t_2, t_3].
\end{equation}
We now integrate for $t$ between $t_2$ and $t_3$:
\begin{equation}
\begin{aligned}
z(t_3) - z(t_2) &\leq  \int_{t_2}^{t_3}\Big({-}\frac{A^2}{\eee}(t - t_2)\eee^{-z_0} + C\,\eee^{-z_0}\Big)\ud t \\
&=-\frac 12 \frac{A^2}{\eee}\eee^{-z_0}(t_3 - t_2)^2 + C\,\eee^{-z_0}(t_3 - t_2) \\
&\ \leq \eee^{-z_0}\sup_{s>0} ( -\frac 12 \frac{A^2}{\eee} s^2+Cs)\\
&\leq \frac{\eee\,C^2}{2 A^2}\eee^{-z_0},
\end{aligned}
\end{equation}
so that \eqref{eq:q-mono-0} follows if $T_0$ (hence also $z_0$) is large enough. 

Set $r(t) := p(t) - 2A\eee^{-\frac 12 z(t)}$. Using~\eqref{eq:mod-for-diff} we have
\begin{equation}
\label{eq:dtr}
\begin{aligned}
r'(t) = p'(t) + Az'(t)\eee^{-\frac 12 z(t)} = -2A^2\eee^{-z(t)} + Ap(t)\eee^{-\frac 12 z(t)} + O(z(t)^{-1}\eee^{-z(t)}) \\
=  {+} A \eee^{-\frac 12 z(t)}\big(r(t) + O(z(t)^{-1}\eee^{-\frac 12 z(t)})\big).
\end{aligned}
\end{equation}
This implies that there exists $C > 0$ such that 
\EQ{ \label{eq:r-est} 
|r(t)| \leq Cz(t)^{-1}\eee^{-\frac 12 z(t)}, 
}
 for $t$ large enough.
Indeed, suppose there exists $t_1$ arbitrarily large such that $r(t_1) > Cz({t_1})^{-1}\eee^{-\frac 12 z({t_1})}$
(the case $r(t_1) < -Cz(t_1)^{-1}\eee^{-\frac 12 z(t_1)}$ is similar).
Let $t_2 := \sup\{t: r(t) = Cz(t_1)^{-1}\eee^{-\frac 12 z(t_1)}\}$. Since $\lim_{t \to \infty} r(t) = 0$, we have $t_2 \in (t_1, \infty)$
and $r'(t_2) \leq 0$.
Since $z(t)$ is non-decreasing, we have $r(t_2) = Cz(t_1)^{-1}\eee^{-\frac{z(t_1)}{2}} \geq Cz(t_2)^{-1}\eee^{-\frac{z(t_2)}{2}}$.
Thus, if we choose $C$ large enough, \eqref{eq:dtr} yields $r'(t_2) > 0$, a contradiction.

We deduce  {from~\eqref{eq:mod-for-diff}, the definition of $r(t)$, and~\eqref{eq:r-est} } that for some $t_0 > 0$ and all $t \geq t_0$ we have
\begin{equation}
\label{eq:z'-ineq}
\big|z'(t) - 2A \eee^{-\frac 12 z(t)}\big| \leq 2C z(t)^{-1}\,\eee^{-\frac 12 z(t)}
\ \Leftrightarrow\ \big|\big(\eee^{\frac 12 z(t)}\big)' - A\big| \leq Cz(t)^{-1},
\end{equation}
which implies, after integrating,
\begin{equation}
\label{eq:z'-asympt}
\begin{gathered}
(A - o(1))t \leq \eee^{\frac{z(t)}{2}} \leq (A + o(1))t\ \Leftrightarrow \\
 2\log t + 2\log(A - o(1)) \leq z(t) \leq 2\log t + 2\log(A + o(1)),
\end{gathered}
\end{equation}
for $t$ large enough.
Once we know that $z(t) \simeq \log t$, \eqref{eq:regime-2} follows by integrating \eqref{eq:z'-ineq} and taking the logarithm,
similarly as in \eqref{eq:z'-asympt} but with $(\log t)^{-1}$ instead of $o(1)$.
The bound \eqref{eq:regime-1} follows by inserting \eqref{eq:regime-2} into \eqref{eq:z'-ineq}.

We are left with \eqref{eq:regime-3}.
We claim that
\begin{equation}
\label{eq:dtphi-expansion}
\|\partial_t \phi(t)\|_{L^2}^2 = \big((x_1'(t))^2 + (x_2'(t))^2\big)\|\partial_x H\|_{L^2}^2 + \|\partial_t g(t)\|_{L^2}^2 + O(t^{-3}).
\end{equation}
Indeed, differentiating \eqref{eq:g-orth} we obtain $|\la \partial_x H_1(t), \partial_t g(t)\ra| \lesssim t^{-2}$,
so \eqref{eq:dtphi-expansion} follows by squaring \eqref{eq:lyap-schmidt-dtphi} and using \eqref{eq:inter-bound-3}.
Now, we observe that
\begin{equation}
\label{eq:x1'x2'lbound}
(x_1'(t))^2 + (x_2'(t))^2 \geq \frac 12 (x_2'(t) - x_1'(t))^2 \geq 2t^{-2} - Ct^{-2}(\log t)^{-1},
\end{equation}
where the last inequality follows from \eqref{eq:regime-1}.

On the other hand, from \eqref{eq:regime-2} we deduce
\begin{equation}
2{\kappa}^2 \eee^{-(x_2(t) - x_1(t))} = 2{\kappa}^2 A^{-2}t^{-2} + O(t^{-2}(\log t)^{-1}) = t^{-2}\|\partial_x H\|_{L^2}^2 + O(t^{-2}(\log t)^{-1}).
\end{equation}
By \eqref{energy aaa} and  \eqref{eq:V-two-kink},
for some $c > 0$ and $t$ large enough we have
\begin{equation}
c\|g(t)\|_{H^1}^2 + \frac 12 \|\partial_t \phi(t)\|_{L^2}^2 \leq t^{-2}\|\partial_x H\|_{L^2}^2 + O(t^{-2}(\log t)^{-1}),
\end{equation}
so \eqref{eq:regime-3} follows from \eqref{eq:dtphi-expansion} and \eqref{eq:x1'x2'lbound}.
\end{proof}
\begin{remark}
As a by-product of the proof of \eqref{eq:regime-3}, we can deduce that
$-t^{-1} - Ct^{-1}(\log t)^{-1/2} \leq x_1'(t) \leq -t^{-1} + Ct^{-1}(\log t)^{-1/2}$ and
$t^{-1} - Ct^{-1}(\log t)^{-1/2} \leq x_2'(t) \leq t^{-1} + Ct^{-1}(\log t)^{-1/2}$.
However, at this stage it is not clear whether $x_1(t) + \log t$ and $x_2(t) - \log t$ converge as $t\to\infty$.
\end{remark}

\section{The existence and uniqueness of the strongly interacting kink-antikink pair}
\label{sec:nonlin-anal}

\subsection{An implementation of the Lyapunov-Schmidt reduction}

Our strategy can be summarized as follows. Our aim is to find the strongly interacting kink-antikink pair $\phi(t,x)$ as a solution of \eqref{eq:phi4}. 
We assume {\it a priori} that $\phi=1-H_1+H_2+g$ and  that \eqref{eq:g-to-0}--\eqref{eq:g-orth} hold.
Projecting the equation \eqref{eq:phi4} onto the space spanned by $\partial_x H_j$, $j = 1, 2$, and onto its orthogonal complement,
we are lead to solving the {\it projected equation}
\begin{equation}
\label{lsch 1}
\partial_{t}^2 \phi(t, x)=\partial_{x}^2\phi(t, x)+U'(\phi(t, x))+\lambda_1(t)\partial_x H_1(t, x)+\lambda_2(t)\partial_x H_2(t, x)
\end{equation}
and the \emph{bifurcation equations}
\begin{equation}
\label{lsch 2}
\lambda_1(t)=0, \qquad \lambda_2(t)=0.
\end{equation}

Written in terms of $g$, the equation  \eqref{lsch 1}  is
\begin{equation}
\label{eq:phi4-step1}
\begin{aligned}
\partial_t^2 g &+ x_1''\partial_x H_1 - (x_1')^2\partial_x^2 H_1 - x_2''\partial_x H_2 + (x_2')^2 \partial_x^2 H_2 \\
 &- \partial_x^2 g + U'(1 - H_1 + H_2 + g) + U'(H_1) - U'(H_2)
 = \lambda_1 \partial_x H_1
+ \lambda_2 \partial_x H_2.
\end{aligned}
\end{equation}

We will first study solutions $(g, \lambda_1, \lambda_2)$ of \eqref{eq:phi4-step1} for a \emph{given} pair of trajectories $(x_1, x_2)$ satisfying
\begin{gather}
\label{eq:x1-x2-cond-1}
2\log t - C_0 \leq x_2(t) - x_1(t) \leq 2\log t + C_0, \\
\label{eq:x1-x2-cond-2}
|x_1'(t)| + |x_2'(t)| \leq C_0 t^{-1}, \\
\label{eq:x1-x2-cond-3}
|x_1''(t)| + |x_2''(t)| \leq C_0 t^{-2}
\end{gather}
for some $T_0 > 0$ and all $t \geq T_0$; note that \eqref{eq:regime-1}, \eqref{eq:x'bound} and \eqref{eq:x''bound}
guarantee that any kink-antikink pair falls into this regime.

Let us specify what we mean by a ``solution''. For $(x_1, x_2)$ given,
we say that $(g, \lambda_1, \lambda_2)$ solves \eqref{eq:phi4-step1} on a compact time interval $[T_1, T_2]$
if $(g, \partial_t g) \in C([T_1, T_2], \cE)$, \eqref{eq:g-orth} holds for all $T_1 \leq t \leq T_2$,
$\lambda_j \in C([T_1, T_2])$ and $\partial_t^2 g - \partial_x^2 g = r$ in the weak sense, where
\begin{equation}
\begin{aligned}
r := &-x_1''\partial_x H_1 + (x_1')^2\partial_x^2 H_1 + x_2''\partial_x H_2 - (x_2')^2 \partial_x^2 H_2 \\
&- U'(1 - H_1 + H_2 + g) - U'(H_1) + U'(H_2)
 + \lambda_1 \partial_x H_1
+ \lambda_2 \partial_x H_2.
\end{aligned}
\end{equation}
We say that $(g, \lambda_1, \lambda_2)$ solves \eqref{eq:phi4-step1} on $[T_0, \infty)$
if it does so on any interval $[T_0, T]$ for all $T > T_0$.

We will prove that for any pair $(x_1, x_2)$, equation \eqref{lsch 1} has a unique solution $(g, \lambda_1, \lambda_2)$
such that $(g, \partial_t g)$ belongs to a sufficiently small ball of the space $N_1(\cE)$;
note that this last condition is guaranteed by \eqref{eq:regime-3} for any kink-antikink pair.
Then, we will find all the pairs $(x_1, x_2)$ such that \eqref{lsch 2} holds.
It turns out that  \eqref{lsch 2} yields a nonlocal and nonlinear system of second order ODEs for $x_1(t), x_2(t)$. 

The norms $N_\gamma, S_\gamma, W_{\alpha, \beta}$ defined by \eqref{eq:Wab-def} will play an important role in our proof,
as they will allow us to set up a fixed point scheme. 
The following lemma provides some basic facts about them.
\begin{lemma}
\label{lem:Wab-prop}
(i)
For all $\gamma > 0$ and $\alpha \in (-\infty, \gamma)$, the space $N_\gamma \cap W_{\alpha, \gamma} := \{z \in N_\gamma: \|z\|_{W_{\alpha, \gamma}} < \infty\}$  with the norm $\|\cdot\|_{N_\gamma \cap W_{\alpha, \gamma}} := \max(\|\cdot\|_{N_\gamma}, \|\cdot\|_{W_{\alpha, \gamma}})$ is a Banach space,
in which $N_{\gamma + 1}$ is continuously embedded.

(ii) For all $\gamma > 0$ and $\alpha \in (-\infty, \gamma)$ there exists $C = C(\gamma, \alpha)$ such that
for all $z \in C^1$
\begin{equation}
\|z'\|_{W_{\alpha, \gamma}} \leq C \|z\|_{N_\gamma}.
\end{equation}

(iii)
If $\mu \geq 0$ and $\gamma > \frac 12(\sqrt{1+4\mu} - 1)$, then for any $v \in N_{\gamma + 1} \cap W_{\mu^-,\gamma + 1} \cap W_{\mu^+,\gamma + 1}$, where $\mu^\pm := \frac 12 (1 \pm \sqrt{1 + 4\mu})$, the equation
\begin{equation}
\label{eq:euler}
z'' = \mu t^{-2}z + v
\end{equation}
has a unique solution $z \in S_\gamma$. The mapping $v \mapsto z$ is a bounded linear operator $
N_{\gamma + 1} \cap W_{\mu^-,\gamma + 1} \cap W_{\mu^+,\gamma + 1} \to S_\gamma$.
\end{lemma}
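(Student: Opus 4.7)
The plan is to handle (i) and (ii) by standard Banach space and integration-by-parts arguments, and (iii) by an explicit variation-of-parameters formula built from the fundamental solutions $t^{\mu^\pm}$ of the homogeneous Euler equation $z'' = \mu t^{-2} z$. For (i), if $(z_n)$ is Cauchy in $N_\gamma \cap W_{\alpha,\gamma}$, it is in particular Cauchy in the complete space $N_\gamma$ and so converges to some $z \in N_\gamma$; convergence in the $N_\gamma$ norm implies uniform convergence on $[T_0,\infty)$, which lets one pass to the limit inside the integral defining $\|\cdot\|_{W_{\alpha,\gamma}}$ and obtain both $z \in W_{\alpha,\gamma}$ and $z_n \to z$ in the combined norm. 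The embedding $N_{\gamma+1} \hookrightarrow N_\gamma \cap W_{\alpha,\gamma}$ reduces to the trivial bound $\|z\|_{N_\gamma} \leq T_0^{-1}\|z\|_{N_{\gamma+1}}$ together with
\begin{equation*}
t^{\gamma - \alpha}\left|\int_t^\tau s^\alpha z(s)\, ds\right| \leq \|z\|_{N_{\gamma+1}}\, t^{\gamma-\alpha}\int_t^\infty s^{\alpha-\gamma-1}\, ds = (\gamma - \alpha)^{-1}\|z\|_{N_{\gamma+1}},
\end{equation*}
where $\alpha < \gamma$ guarantees convergence of the integral.

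For (ii), integration by parts gives $\int_t^\tau s^\alpha z'(s)\, ds = \tau^\alpha z(\tau) - t^\alpha z(t) - \alpha \int_t^\tau s^{\alpha-1} z(s)\, ds$, and each of the three terms is bounded by a constant times $\|z\|_{N_\gamma}\, t^{\alpha-\gamma}$ using $|z(s)| \leq \|z\|_{N_\gamma}\, s^{-\gamma}$, $\tau \geq t$, and $\alpha < \gamma$. Multiplying by $t^{\gamma-\alpha}$ yields the conclusion.

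For (iii), the characteristic exponents satisfy $\mu^+ + \mu^- = 1$ and the Wronskian of $\{t^{\mu^+}, t^{\mu^-}\}$ equals $\pm\sqrt{1+4\mu}$. Variation of parameters with integration from $t$ to $\infty$ suggests the candidate
\begin{equation*}
z(t) := \frac{1}{\sqrt{1+4\mu}}\left(t^{\mu^-}\int_t^\infty s^{\mu^+} v(s)\, ds - t^{\mu^+}\int_t^\infty s^{\mu^-} v(s)\, ds\right),
\end{equation*}
and the crux is the meaningfulness of the improper integrals. The hypothesis $\gamma > \mu^+ - 1$ gives $\mu^\pm - \gamma - 1 < 0$, so the uniform bound $\left|\int_t^\tau s^{\mu^\pm} v\, ds\right| \leq \|v\|_{W_{\mu^\pm, \gamma+1}}\, t^{\mu^\pm-\gamma-1}$ shows that the partial integrals form a Cauchy net as $\tau \to \infty$, yielding well-defined improper integrals satisfying the same bound. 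Combined with $\mu^+ + \mu^- = 1$ this gives $|z(t)| \lesssim t^{-\gamma}$; direct differentiation (in which the $v(t)$ boundary contributions cancel) gives $|z'(t)| \lesssim t^{-\gamma-1}$; and the $N_{\gamma+1}$ bound on $z''$ follows from the equation itself together with $\|v\|_{N_{\gamma+1}}$. Uniqueness in $S_\gamma$ follows from ruling out nontrivial homogeneous solutions $c_+ t^{\mu^+} + c_- t^{\mu^-}$ in $N_\gamma$: the relations $\gamma + \mu^+ > 0$ and $\gamma + \mu^- > 0$ (the latter precisely because $\gamma > \mu^+ - 1 = -\mu^-$) force $c_+ = c_- = 0$ since elements of the completion $N_\gamma$ must satisfy $t^\gamma z(t) \to 0$. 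The principal technical point is thus the sharp matching between the decay rate $\gamma$ and the convergence of the two variation-of-parameters integrals, controlled precisely by the $W_{\mu^\pm, \gamma+1}$ norms rather than by the $N_{\gamma+1}$ norm alone; this is exactly why the three-norm intersection appears in the hypothesis.
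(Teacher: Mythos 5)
Your proposal is correct and follows essentially the same route as the paper: the same variation-of-parameters formula $z(t)=\frac{1}{\sqrt{1+4\mu}}\bigl(t^{\mu^-}\int_t^\infty s^{\mu^+}v\,ds-t^{\mu^+}\int_t^\infty s^{\mu^-}v\,ds\bigr)$ with convergence of the improper integrals and the bounds on $z,z',z''$ controlled exactly by the $W_{\mu^\pm,\gamma+1}$ norms, uniqueness by excluding $c^+t^{\mu^+}+c^-t^{\mu^-}$ from $N_\gamma$ via $\gamma>-\mu^->-\mu^+$, and integration by parts for (ii); your Cauchy-sequence argument for (i) simply fills in what the paper leaves to the reader.
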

\begin{proof}
Point (i) is left to the reader. Point (ii) follows by integrating by parts in time.
We now prove (iii).
The definition of $W_{\mu^\pm, \gamma + 1}$ and the fact that $\gamma + 1 > \mu^\pm$ imply
\begin{equation}
\lim_{t \to \infty}\sup_{\tau \geq t}\bigg|\int_t^\tau s^{\mu^\pm}v(s)\ud s\bigg| = 0,
\end{equation}
thus for all $t \geq T_0$ the integral $\int_t^\infty s^{\mu^\pm}v(s)\ud s$ exists as an improper Riemann integral.
Moreover, we have
\begin{equation}
\label{eq:euler-bound-forcing}
\sup_{t \geq T_0} t^{\gamma+1-\mu^\pm}\left|\int_t^\infty s^{\mu^\pm} v(s)\ud s\right| \leq \|v\|_{W_{\mu^\pm, \gamma + 1}}.
\end{equation}
We observe that \eqref{eq:euler} is a standard Euler differential equation. A particular solution is given by
\begin{equation}
z(t) :=\frac{1}{\sqrt{1+4\mu}} \left(t^{\mu^-}\int_t^\infty s^{\mu^+} v(s)\ud s-t^{\mu^+}\int_t^\infty s^{\mu^-} v(s)\ud s\right).
\end{equation}
From \eqref{eq:euler-bound-forcing} we easily deduce that
\begin{equation}
\sup_{t \geq T_0}t^\gamma |z(t)| \leq \frac{1}{\sqrt{1+4\mu}}\big(\|v\|_{W_{\mu^+, \gamma + 1}}+\|v\|_{W_{\mu^-, \gamma + 1}}\big)
\lesssim \|v\|_{W_{\mu^+, \gamma + 1}\cap W_{\mu^-, \gamma + 1}}.
\end{equation}
We have
\begin{equation}
z'(t) :=\frac{1}{\sqrt{1+4\mu}} \left(\mu^-t^{\mu^- - 1}\int_t^\infty s^{\mu^+} v(s)\ud s-\mu_+t^{\mu^+ - 1}\int_t^\infty s^{\mu^-} v(s)\ud s\right),
\end{equation}
thus, analogously,
\begin{equation}
\sup_{t \geq T_0}t^{\gamma+1} |z'(t)| \lesssim \|v\|_{W_{\mu^+, \gamma + 1}\cap W_{\mu^-, \gamma + 1}}.
\end{equation}
The fact that $\|z''\|_{N_{\gamma+1}} \lesssim \|v\|_{N_{\gamma + 1}\cap W_{\mu^+, \gamma + 1}\cap W_{\mu^-, \gamma + 1}}$
follows from the equation \eqref{eq:euler}. This finishes the proof that
\begin{equation}
\|z\|_{S_\gamma} \lesssim \|v\|_{N_{\gamma + 1}\cap W_{\mu^+, \gamma + 1}\cap W_{\mu^-, \gamma + 1}}.
\end{equation}

Regarding uniqueness, the general solution of \eqref{eq:euler} is
\begin{equation}
z_g(t) = z(t) + c^+t^{\mu^+} + c^- t^{\mu^-}.
\end{equation}
Since $\gamma > -\mu^- > -\mu^+$, it is clear that $c^+t^{\mu^+} + c^- t^{\mu^-} \notin N_\gamma$ unless $c^+ = c^- = 0$.

\end{proof}
For $\gamma > 2$ we set { 
\EQ{ \label{eq:Wdef} 
{\mathbf W}_{\gamma} := \bigcap_{\alpha \in \{{-}1, 0, 1, 2\}}W_{\alpha, \gamma}.
}}
As we will see, the four indices $\alpha$ correspond to the characteristic exponents of certain differential equations
of the form \eqref{eq:euler} appearing in the proof when we solve the bifurcation equation.

\subsection{The linear equation associated with \eqref{eq:phi4-step1}} \label{s:lin} 
In this subsection we treat the linear equation associated to \eqref{eq:phi4-step1} for given trajectories $(x_1, x_2)$ satisfying~\eqref{eq:x1-x2-cond-1},
\eqref{eq:x1-x2-cond-2} and \eqref{eq:x1-x2-cond-3}. We also compare solutions associated to two different sets of trajectories $(x_1, x_2)$ and $(\sh x_1,\sh  x_2)$ in preparation for the contraction mapping argument performed in Section \ref{s:contraction1}.

In the next lemma, we solve the linear problem corresponding to \eqref{eq:phi4-step1}.
\begin{lemma}
\label{lem:phi4-step1-lin}
For any $\gamma > 1$ and $\beta \in (2, \gamma + 1)$ there exists $C = C(\beta, \gamma) > 0$
and $T_0 = T_0(\beta, \gamma)$ such that the following holds.
For all $(x_1, x_2)$ satisfying \eqref{eq:x1-x2-cond-1},
\eqref{eq:x1-x2-cond-2} and \eqref{eq:x1-x2-cond-3}, and all $f \in N_{\gamma + 1}(L^2)$, the system
\begin{gather}
\label{eq:phi4-lind}
\partial_t^2 h - \partial_x^2 h + U''(1-H_1 + H_2)h 
= f +\lambda_1 \partial_x H_1 + \lambda_2 \partial_x H_2, \\
\label{eq:phi4-lind-orth}
\la \partial_x H_1, h\ra = \la \partial_x H_2, h\ra = 0
\end{gather}
has a unique solution $(h, \lambda_1, \lambda_2)$ such that $(h, \partial_t h) \in N_\gamma(\cE)$.
Moreover, this solution satisfies
\begin{align}
\label{eq:phi4-step1-lin}
\|(h, \partial_t h)\|_{N_{\gamma}(\cE)} + \sum_{j=1}^2
\big\|\lambda_j + \|\partial_x H\|_{L^2}^{-2}\la \partial_x H_j, f\ra \big\|_{{\mathbf W}_{\beta} \cap N_{\gamma+1}} \leq C \|f\|_{N_{\gamma+1}(L^2)}.
\end{align}
If $\gamma =1$, the same result holds without the inclusion of the ${\mathbf W}_{\beta}$ norm on the left-hand side of~\eqref{eq:phi4-step1-lin}.
\end{lemma}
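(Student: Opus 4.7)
The proof has three main ingredients: (i) an explicit formula for $\lambda_1,\lambda_2$ obtained by testing \eqref{eq:phi4-lind} against $\partial_x H_j$ and enforcing \eqref{eq:phi4-lind-orth}; (ii) an a priori estimate on $(h,\partial_t h)$ in $N_\gamma(\cE)$ via a mixed energy/localized momentum functional; (iii) a refined estimate on $\lambda_j$ in the ${\mathbf W}_\beta$ norm via integration by parts in time. For existence I will solve \eqref{eq:phi4-lind}--\eqref{eq:phi4-lind-orth} backward in time from zero Cauchy data at $t=T$ (a standard Lagrange-multiplier reformulation renders this a classical well-posed wave equation on the codimension-two subspace fixed by \eqref{eq:phi4-lind-orth}), apply the a priori bound uniformly in $T$, and send $T\to\infty$; uniqueness in $N_\gamma(\cE)$ follows by applying the same estimate to the difference of two solutions.

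First I pair \eqref{eq:phi4-lind} with $\partial_x H_j$ and exploit $L_j\partial_x H_j=0$ from \eqref{eq:Lc-ker} to rewrite the potential term as $\langle \partial_x H_j, (U''(1-H_1+H_2)-U''(H_j))h\rangle$, which by Lemma~\ref{lem:inter-bound} has size $O(\eee^{-(x_2-x_1)}\|h\|_{L^2})$. The $\partial_t^2 h$ term is handled by differentiating \eqref{eq:phi4-lind-orth} twice in time, producing a $2\times 2$ linear system for $(\lambda_1,\lambda_2)$ whose off-diagonal entries $\langle \partial_x H_1,\partial_x H_2\rangle$ are exponentially small in $x_2-x_1$. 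This yields the representation
\begin{equation*}
\lambda_j = -\|\partial_x H\|_{L^2}^{-2}\langle \partial_x H_j, f\rangle + R_j(t),
\end{equation*}
where $R_j$ is a sum of four explicit terms of the shape $(x_j')^2\langle \partial_x^3 H_j,h\rangle$, $x_j''\langle \partial_x^2 H_j,h\rangle$, $x_j'\langle \partial_x^2 H_j,\partial_t h\rangle$ and $\langle \partial_x H_j,(U''(1-H_1+H_2)-U''(H_j))h\rangle$.

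For the a priori estimate I introduce the modified energy
\begin{equation*}
\mathcal F(t):=\tfrac12\!\int\!\big((\partial_t h)^2+(\partial_x h)^2+U''(1-H_1+H_2)h^2\big)\ud x+\int\tilde\chi(t,x)\,\partial_t h\,\partial_x h\,\ud x,
\end{equation*}
where $\tilde\chi$ is a bounded cutoff built from $\chi_1,\chi_2$ of \eqref{eq:mod-cutoff-def} weighted by $x_j'(t)$. Lemma~\ref{lem:D2H} together with \eqref{eq:phi4-lind-orth} makes the quadratic part coercive, so for $T_0$ large $\mathcal F(t)\simeq\|(h(t),\partial_t h(t))\|_{\cE}^2$. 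Differentiating $\mathcal F$ in time and using \eqref{eq:phi4-lind}, the localized momentum is engineered so that the $O(t^{-1})$ contribution from $\partial_t(U''(1-H_1+H_2))=U'''(1-H_1+H_2)(x_1'\partial_x H_1-x_2'\partial_x H_2)$ is exactly cancelled by the leading piece of $\frac{d}{dt}\!\int\tilde\chi\,\partial_t h\,\partial_x h\,\ud x$; this is the momentum-based analog of the mixed energy-virial functional of~\cite{JJ-AJM}, translation replacing scaling as the underlying symmetry. One obtains
\begin{equation*}
\Big|\frac{d}{dt}\mathcal F(t)\Big|\lesssim t^{-2}\mathcal F(t)+\|f(t)\|_{L^2}\sqrt{\mathcal F(t)}.
\end{equation*}
Since $\gamma>1$ makes $t^{-2}$ integrable, a standard Gr\"onwall argument on $[t,T]$ with zero data at $T$ produces $\sqrt{\mathcal F(t)}\lesssim t^{-\gamma}\|f\|_{N_{\gamma+1}(L^2)}$. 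To upgrade $R_j$ to the ${\mathbf W}_\beta$ bound I estimate each of its four components separately. The direct $N_{\gamma+1}$ bound follows from \eqref{eq:x1-x2-cond-1}--\eqref{eq:x1-x2-cond-3} and the already-obtained decay of $h,\partial_t h$. For the ${\mathbf W}_\beta$ part, the indices $\alpha\in\{-1,0,1,2\}$ correspond to the moments $\int_t^\tau s^\alpha R_j(s)\,\ud s$ needed in the bifurcation analysis; these are controlled by integrating by parts in time (Lemma~\ref{lem:Wab-prop}(ii)) and using the identity $\langle \partial_x H_j,\partial_t h\rangle=x_j'\langle \partial_x^2 H_j,h\rangle$ from differentiating \eqref{eq:phi4-lind-orth}, so all boundary contributions take the form $s^\alpha$ times a quantity bounded by $s^{-\gamma}\|f\|_{N_{\gamma+1}(L^2)}$. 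The restriction $\beta<\gamma+1$ together with $\beta>2$ (the largest $\alpha$) is exactly what Lemma~\ref{lem:Wab-prop} requires; in the borderline case $\gamma=1$ the integration by parts costs a logarithm and the ${\mathbf W}_\beta$ bound is dropped.

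The hard part is designing $\tilde\chi$. It must simultaneously (a) preserve the coercivity of $\mathcal F$ so that the quadratic form still dominates $\|(h,\partial_t h)\|_{\cE}^2$, (b) cancel the borderline $O(t^{-1})$ term produced by $\partial_t U''(1-H_1+H_2)$ upon differentiating $\mathcal F$, and (c) avoid generating new borderline terms of indeterminate sign in the transition region between the two kinks, where both $\partial_x H_1$ and $\partial_x H_2$ are already exponentially small but $\partial_t\tilde\chi$ is not. Arranging this balance is the technical core of the lemma and is precisely what forces the trajectory conditions \eqref{eq:x1-x2-cond-1}--\eqref{eq:x1-x2-cond-3}.
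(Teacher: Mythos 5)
Your outline follows the paper's proof essentially step for step: the $2\times 2$ diagonally dominant system for $(\lambda_1,\lambda_2)$ obtained by pairing \eqref{eq:phi4-lind} with $\partial_x H_j$ and differentiating \eqref{eq:phi4-lind-orth} twice, the modified energy with localized momentum correction $\sum_j x_j'\chi_j\,\partial_t h\,\partial_x h$ built from the cutoffs \eqref{eq:mod-cutoff-def}, the refined ${\mathbf W}_\beta$ bound for the multipliers via multiplication by $s^\alpha$ and integration by parts in time on the $x_j'\la \partial_x^2 H_j,\partial_t h\ra$ term (the boundary terms carry the extra factor $x_j'\sim s^{-1}$, which is what produces the constraint $\beta<\gamma+1$), and existence by solving backwards from zero data at times $t_n\to\infty$ with a uniform a priori bound and a weak limit, uniqueness coming from the same energy estimate applied to differences.

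The one step that would fail as written is the claimed differential inequality $\bigl|\frac{d}{dt}\mathcal F(t)\bigr|\lesssim t^{-2}\mathcal F(t)+\|f(t)\|_{L^2}\sqrt{\mathcal F(t)}$. With any admissible cutoff the transition must occur over the inter-kink region of width $x_2-x_1\simeq 2\log t$, so $\|\partial_x\chi_j\|_{L^\infty}\gtrsim (\log t)^{-1}$, and after differentiating the correction term one is left with contributions such as $x_j'\int_\bR \partial_x\chi_j\,(\partial_t h)^2\ud x$, $x_j'\int_\bR \partial_x\chi_j\,(\partial_x h)^2\ud x$ and $x_j'\int_\bR \partial_x\chi_j\, U''(1-H_1+H_2)h^2\ud x$, which are only $O\bigl(t^{-1}(\log t)^{-1}\|(h,\partial_t h)\|_{\cE}^2\bigr)$ and have no sign; they cannot be absorbed into $t^{-2}\mathcal F$. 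The correct statement is $\bigl|\mathcal F'(t)\bigr|\le C\|f(t)\|_{L^2}\|(h,\partial_t h)(t)\|_{\cE}+c\,t^{-1}\|(h,\partial_t h)(t)\|_{\cE}^2$ with $c>0$ arbitrarily small once $T_0$ is large, and the estimate is then closed not by integrability of the coefficient (your ``$\gamma>1$ makes $t^{-2}$ integrable'' is the wrong mechanism) but by integrating from $t$ to $\infty$ and absorbing the term $c\int_t^\infty s^{-1-2\gamma}\ud s\,\|(h,\partial_t h)\|_{N_\gamma(\cE)}^2$ into the left-hand side in the weighted norm, using $c\ll 1$ (equivalently, a Gr\"onwall argument that exploits $c<\gamma$). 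This is a minor repair and does not change the architecture of your proof, but the smallness of $c$ obtained from $\|\partial_x\chi_j\|_{L^\infty}\ll1$ and $T_0$ large is genuinely what makes the energy estimate close, so it should be stated as such rather than as a $t^{-2}$ coefficient. (A small additional remark: for $\gamma=1$ the ${\mathbf W}_\beta$ bound is dropped simply because the window $\beta\in(2,\gamma+1)$ is empty, not because of a logarithmic loss in the integration by parts.)
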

\begin{remark}
Similarly as in the case of \eqref{eq:phi4-step1}, we say that $(h, \lambda_1, \lambda_2)$ is a solution of \eqref{eq:phi4-lind}--\eqref{eq:phi4-lind-orth} on a compact time interval $[T_1, T_2]$ if $(h, \partial_t h) \in C([T_1, T_2], \cE)$,
$\lambda_j \in C([T_1, T_2])$ and $\partial_t^2 h - \partial_x^2 h = r$ in the weak sense, where
\begin{equation}
\label{eq:r-def}
r := -U''(1-H_1 + H_2)h + f +\lambda_1 \partial_x H_1 + \lambda_2 \partial_x H_2,
\end{equation}
with the usual extension to the case of $[T_0, \infty)$.
\end{remark}
\begin{proof}

\noindent
\textbf{Step 1.} (Computation of the Lagrange multipliers.)
Assume that $(h, \lambda_1, \lambda_2)$ solves \eqref{eq:phi4-lind}--\eqref{eq:phi4-lind-orth}
on some time interval. Differentiating in time the orthogonality conditions, we obtain
\begin{equation}
\label{eq:deriv-orth-1}
0 = \dd t\la \partial_x H_j, h\ra = \la \partial_x H_j, \partial_t h\ra - x_j'\la \partial_x^2 H_j, h\ra.
\end{equation}
Differentiating again we get
\begin{equation}
\la \partial_x H_j, \partial_t^2 h\ra = 2x_j'\la \partial_x^2 H_j, \partial_t h\ra + x_j''\la \partial_x^2 H_j, h\ra - (x_j')^2\la \partial_x^3 H_j, h\ra.
\end{equation}
Multiplying \eqref{eq:phi4-lind} by $\partial_x H_1$ and integrating in $x$ we get
\begin{equation}
\label{eq:proj-H1}
\begin{aligned}
2x_1'\la \partial_x^2 H_1, \partial_t h\ra + x_1''\la \partial_x^2 H_1, h\ra - (x_1')^2\la \partial_x^3 H_1, h\ra+ \la (U''(1-H_1 + H_2) - U''(H_1))\partial_x H_1, h\ra \\
= \la \partial_x H_1, f\ra + \lambda_1 \|\partial_x H\|_{L^2}^2 + \lambda_2\la \partial_x H_1, \partial_x H_2\ra.
\end{aligned}
\end{equation}
Multiplying \eqref{eq:phi4-lind} by $\partial_x H_2$ and integrating in $x$ we get
\begin{equation}
\label{eq:proj-H2}
\begin{aligned}
2x_2'\la \partial_x^2 H_2, \partial_t h\ra + x_2''\la \partial_x^2 H_2, h\ra - (x_2')^2\la \partial_x^3 H_2, h\ra+ \la (U''(1-H_1 + H_2) - U''(H_2))\partial_x H_2, h\ra \\
= \la \partial_x H_2, f\ra + \lambda_2 \|\partial_x H\|_{L^2}^2 + \lambda_1\la \partial_x H_1, \partial_x H_2\ra.
\end{aligned}
\end{equation}
These two equalities form a linear system for $\lambda_1$ and $\lambda_2$.
By \eqref{eq:inter-bound-3}, its matrix is strictly diagonally dominant.
By \eqref{eq:x1-x2-cond-1} and \eqref{eq:inter-bound-2}, we know that
\begin{equation}
\big|\la (U''(1-H_1 + H_2) - U''(H_1))\partial_x H_1, h\ra\big| \ll t^{-1}\|h\|_{L^2},
\end{equation}
hence we obtain
\begin{equation}
\label{eq:l1-l2-bound}
|\lambda_1(t)| + |\lambda_2(t)| \lesssim \|f(t)\|_{L^2} + t^{-1} \|(h(t), \partial_t h(t))\|_\cE.
\end{equation}

\textbf{Step 2.} (Local existence.)
Let $[T_1, T_2] \owns t_0 \in \bR$, $f \in C([T_1, T_2], L^2)$ and $(h_0, h_1) \in \cE$ satisfying the orthogonality and compatibility conditions
\begin{equation}
\la \partial_x H_j(t_0), h_0\ra = 0, \qquad \la \partial_x H_j(t_0), h_1\ra = x_j'(t_0)\la \partial_x^2 H_j(t_0), h_0\ra.
\end{equation}
We prove that there exists a unique solution of \eqref{eq:phi4-lind}--\eqref{eq:phi4-lind-orth}
for the initial data $(h(t_0), \partial_t h(t_0)) = (h_0, h_1)$.
For given $(h, \partial_t h) \in C([T_1, T_2], \cE)$, we define $\lambda_j$ as the solution of the system \eqref{eq:proj-H2}
and $\wt h$ as the solution of $\partial_t^2 \wt h - \partial_x^2 \wt h  = r$,
with the given initial data, where $r$ is defined by \eqref{eq:r-def}.
The mapping $(h,\partial_t h) \mapsto (\wt h, \partial_t \wt h)$ is a contraction in $C([T_1, T_2], \cE)$ if $T_2 - T_1$ is sufficiently small.
The fixed point is the solution.

We obtain well-posedness on any compact time interval by dividing it into smaller ones.
Note that if $(h_0, h_1)$ and $f$ are smooth, then so is the solution, which justifies arguments
involving approximating a given finite-energy solution by smooth solutions.
Observe finally that if $f = 0$,
then for any $t$ the linear map $(h_0, h_1) \mapsto (h(t), \partial_t h(t))$ is bounded for the norm $\cE$,
in particular $(h_0, h_1) \wto 0$ implies $(h(t), \partial_t h(t)) \wto 0$. We will use this fact in the Step 5 below.

\textbf{Step 3.} (Energy estimate and uniqueness.)
Assume $(h, \lambda_1, \lambda_2)$ solves \eqref{eq:phi4-lind}--\eqref{eq:phi4-lind-orth} on $[T_0, \infty)$ and $(h, \partial_t h) \in N_\gamma(\cE)$.
We will prove that
\begin{equation}
\label{eq:h-estim-ref}
\|(h, \partial_t h)\|_{N_{\gamma}(\cE)} \leq C \|f\|_{N_{\gamma+1}(L^2)}.
\end{equation}
In particular, this proves uniqueness.

Like in the previous section, we set
\begin{equation}
\chi_1(t, x) := \chi\Big(\frac{x - x_1(t)}{x_2(t) - x_1(t)}\Big),\quad \chi_2 := 1 - \chi_1.
\end{equation}
We introduce a \emph{modified energy functional}
\begin{equation}
\begin{aligned}
I(t) := \int_\bR \Big(\frac 12 (\partial_t h(t))^2 + \frac 12 (\partial_x h(t))^2 + \frac 12 U''(1 - H_1(t) + H_2(t))h(t)^2 \\
-\sum_{j=1}^2 x_j'(t)\chi_j(t)\partial_t h(t)\,\partial_x h(t)\Big) \ud x.
\end{aligned}
\end{equation}
The last term is sometimes called a \emph{correction term}, because its size is negligible
as compared to the other terms. However, we will see that, once we take the time derivative, this term is not negligible anymore.
Denote $J_j(t) := \int_\bR \chi_j(t)\,\partial_t h(t)\,\partial_x h(t)\ud x$.

By coercivity, see Lemma~\ref{lem:D2H}, and \eqref{eq:phi4-lind-orth}, we have
\begin{equation}
\label{eq:I-coercive}
I(t) \gtrsim \|(h(t), \partial_t h(t))\|_{\cE}^2.
\end{equation}
After standard cancellations we obtain
\begin{equation}
\label{eq:dtI-1}
\begin{aligned}
I'(t) &= \big\la \partial_t h(t), f(t) +\lambda_1(t) \partial_x H_1(t) + \lambda_2(t) \partial_x H_2(t)\big\ra \\
&+ \frac 12 \int_\bR U'''(1-H_1(t) + H_2(t))(x_1'(t)\partial_x H_1(t) - x_2'(t)\partial_x H_2(t))h(t)^2\ud x \\
&- x_1''(t)J_1(t) - x_1'(t)J_1'(t)- x_2''(t)J_2(t) - x_2'(t)J_2'(t).
\end{aligned}
\end{equation}

Observe that \eqref{eq:deriv-orth-1} yields $\la \partial_x H_j, \partial_t h\ra = x_j'\la \partial_x^2 H_j, h\ra$,
so~\eqref{eq:x1-x2-cond-2} and \eqref{eq:l1-l2-bound} yield
\EQ{
\label{eq:first-line}
|\la \partial_t h, \lambda_1 \partial_x H_1 + \lambda_2 \partial_x H_2\ra| & \lesssim  {t^{-1} \big(\|f\|_{L^2} + t^{-1} \|(h, \partial_t h)\|_\cE\big) \| (h, \p_t h)\|_{\E} } \\
& \lesssim {t^{-1} \| f \|_{L^2}\|(h, \partial_t h)\|_{\cE} }+   t^{-2}\|(h, \partial_t h)\|_{\cE}^2. 
}
The first and third term of the last line of~{\eqref{eq:dtI-1}} are negligible. The second and fourth term are not, and we will see that they
cancel (up to negligible terms) the second line above.

We compute $ J_1'(t)$. The symbol ``$\simeq$'' means ``up to terms $\leq c \|(h, \partial_t h)\|_{\cE}^2$
for an arbitrarily small constant $c > 0$''. We have
\begin{equation}
\label{eq:dtJ1-0}
J_1' = -x_1'\int_{\bR}\partial_x \chi_1\,\partial_t h\,\partial_x h\ud x + \int_\bR \chi_1\,\partial_t^2 h\,\partial_x h\ud x
+ \int_{\bR}\chi_1\,\partial_t h\,\partial_{t}\partial_x h\ud x.
\end{equation}
Since $|x_1'| \lesssim t^{-1}$ and $|\partial_x \chi_1| \ll 1$, the first term is negligible.
The third term is also negligible,
since $\int_\bR \chi_1 \partial_x(\partial_t h)^2\ud x = -\int_{\bR}(\partial_x\chi_1)(\partial_t h)^2\ud x$.
We compute the second term using \eqref{eq:phi4-lind}:
\begin{equation}
\begin{aligned}
\int_\bR \chi_1 \partial_t^2 h \partial_x h\ud x = \int_\bR \chi_1\big(\partial_x^2 h - U''(1 - H_1 + H_2)h + f + \lambda_1 \partial_x H_1 + \lambda_2\partial_x H_2\big)\partial_x h\ud x.
\end{aligned}
\end{equation}
We have $\int_\bR \chi_1 \partial_x^2 h\partial_x h\ud x = -\frac 12 \int_{\bR}\partial_x\chi_1(\partial_x h)^2\ud x$,
which is negligible. Using \eqref{eq:l1-l2-bound},
\begin{equation}
\bigg|\int_\bR \chi_1 \big(f + \lambda_1 \partial_x H_1 + \lambda_2\partial_x H_2\big)\partial_x h\ud x\bigg| \leq C\|f\|_{L^2}\|h\|_{H^1} + c\|h\|_{H^1}^2.
\end{equation}
Finally,
\begin{equation}
\begin{aligned}
-\int_\bR \chi_1 U''(1-H_1 + H_2)h\,\partial_x h\ud x &= \frac 12 \int_\bR \chi_1 ({-}\partial_x H_1 + \partial_x H_2)U'''(1 - H_1 + H_2)h^2\ud x \\
&+ \frac 12 \int_\bR \partial_x \chi_1 U''(1-H_1 + H_2)h^2\ud x.
\end{aligned}
\end{equation}
The second term is negligible. Since $\partial_x H_2$ is small on the support of $\chi_1$, we conclude that
\begin{equation}
\bigg|J_1' + \frac 12 \int_\bR \chi_1\partial_x H_1 U'''(1-H_1 + H_2)h^2\ud x\bigg| \leq C\|f\|_{L^2}\|h\|_{H^1} + c\|h\|_{H^1}^2.
\end{equation}
In a similar way, 
\begin{equation}
\bigg|J_2' - \frac 12 \int_\bR {\chi_2}\partial_x H_2 U'''(1-H_1 + H_2)h^2\ud x\bigg| \leq C\|f\|_{L^2}\|h\|_{H^1} + c\|h\|_{H^1}^2.
\end{equation}
Combining these estimates with \eqref{eq:dtI-1}, we obtain
\begin{equation}
\label{eq:dtI-2}
\big|I' - \big\la \partial_t h, f +\lambda_1 \partial_x H_1 + \lambda_2 \partial_x H_2\big\ra\big| \leq Ct^{-1}\|f\|_{L^2}\|(h, \partial_t h)\|_{\cE} + ct^{-1}\|(h, \partial_t h)\|_{\cE}^2.
\end{equation}
In particular, using \eqref{eq:first-line},
\begin{equation}
\label{eq:dtI-4}
\big|I'\big| \leq C\|f\|_{L^2}\|(h, \partial_t h)\|_{\cE} + ct^{-1}\|(h, \partial_t h)\|_{\cE}^2.
\end{equation}
Integrating in time and using \eqref{eq:I-coercive}, we obtain
\begin{equation}
\label{eq:en-est-h}
\|(h(t), \partial_t h(t))\|_{\cE}^2 \leq C\int_t^\infty \|(h, \partial_t h)\|_{\cE}\|f\|_{L^2}\ud s
+ c\int_t^\infty s^{-1}\|(h, \partial_t h)\|_{\cE}^2\ud s,
\end{equation}
where $c$ can be made as small as we wish {choosing $T_0$ large}.
Invoking the definition of the norm $N_\gamma$, we get
\begin{equation}
\|(h(t), \partial_t h(t))\|_{\cE}^2 \leq C\|(h, \partial_t h)\|_{{N}_{\gamma}(\cE)}\|f\|_{N_{\gamma+1}(L^2)}\int_t^\infty s^{-\gamma}s^{-1-\gamma}\ud s
+ c\|(h, \partial_t h)\|_{N_\gamma(\cE)}^2 \int_t^\infty s^{-1}s^{-2\gamma}\ud s,
\end{equation}
thus $\|(h, \partial_t h)\|_{N_\gamma(\cE)} \lesssim \|f\|_{N_{\gamma+1}(L^2)}$, which is the required bound for the first term in \eqref{eq:phi4-step1-lin}.

\textbf{Step 4.} (Refined estimate of Lagrange multipliers.)
Regarding the second term in \eqref{eq:phi4-step1-lin}, it is clear from \eqref{eq:l1-l2-bound}
and the bound on $\|(h, \partial_t h)\|_{N_\gamma(\cE)}$ which we just proved that
\begin{equation}
\|\lambda_j\|_{N_{\gamma+1}} \lesssim \|f\|_{N_{\gamma + 1}(L^2)} \quad \Rightarrow\quad \big\|\lambda_j + \|\partial_x H\|_{L^2}^{-2}\la \partial_x H_j, f\ra \big\|_{N_{\gamma+1}} \lesssim \|f\|_{N_{\gamma + 1}(L^2)}.
\end{equation}
In order to obtain the bound on the $W_{\alpha, \beta}$ norm for $\alpha \in \{-1, 0, 1, 2\}$,
we multiply \eqref{eq:proj-H1} by $t^\alpha$ and integrate.
Since $|\la \partial_x H_1, \partial_x H_2\ra| \lesssim t^{-1}$, we get
\begin{equation}
\sup_{t \geq T_0} t^{\gamma + 2}|\lambda_2(t)\la \partial_x H_1, \partial_x H_2\ra| \leq C\|f\|_{N_{\gamma + 1}(L^2)}\quad\Rightarrow\quad
\|\lambda_2(t)\la \partial_x H_1, \partial_x H_2\ra\|_{W_{\alpha, \gamma + 1}} \leq C\|f\|_{N_{\gamma + 1}(L^2)}
\end{equation}
Similarly, we have
\begin{equation}
\sup_{t \geq T_0} t^{\beta + 1}|\la (U''(1-H_1 + H_2) - U''(H_1))\partial_x H_1, h\ra| \leq C\|f\|_{N_{\gamma + 1}(L^2)},
\end{equation}
which allows to bound the last term of the first line of \eqref{eq:proj-H1}.
The second and the third term are estimated in an analogous way (even a bound in $W_{\alpha, \gamma + 1}$
would be possible for these terms). The first term needs to be integrated by parts as follows:
\begin{equation}
\begin{aligned}
&\int_t^\tau \la s^\alpha x_1'(s)\partial_x^2 H_1(s), \partial_t h(s)\ra\ud s = \tau^\alpha x_1'(\tau)\la \partial_x^2 H_1(\tau), h(\tau)\ra - t^\alpha x_1'(t)\la \partial_x^2 H_1(t), h(t)\ra
\\ &- \int_t^\tau \la \alpha s^{\alpha - 1}x_1'(s)\partial_x^2 H_1(s) + s^\alpha x_1''(s) \partial_x^2 H_1(s)
- s^\alpha (x_1'(s))^2 \partial_x^3 H_1(s), h(s)\ra\ud s,
\end{aligned}
\end{equation}
and the last integral is estimated using the triangle inequality.

\textbf{Step 5.} (Existence.)
It remains to show that there exists a solution $(h, \lambda_1, \lambda_2)$ such that $(h, \partial_t h) \in N_\gamma(\cE)$.
Fix any sequence $t_n \to \infty$ and denote by 
$(h_n, \p_t h_n)$ the unique solution to~\eqref{eq:phi4-lind} on the time interval $[T_0, t_n]$
with the initial data $(h, \p_t h)(t_n) = (0, 0)$. We set $h(t) = 0$ for $t \geq t_n$.
The proof of \eqref{eq:h-estim-ref} applies almost without changes and we obtain
\begin{equation}
\|(h_n, \partial_t h_n)\|_{N_\gamma(\cE)} \leq C \|f\|_{N_{\gamma+1}(L^2)}.
\end{equation}
Let $(h_0, h_1)$ be a weak limit of $(h_n(T_0), \partial_t h_n(T_0))$ in $\cE$.
Clearly, it satisfies the orthogonality and compatibility conditions.
Let $(h, \partial_t h)$ be the solution for the initial data $(h(T_0), \partial_t h(T_0)) = (h_0, h_1)$.
We see that for all $t \geq T_0$, $(h(t), \partial_t h(t))$ is the weak limit of $(h_n(t), \partial_t h_n(t))$ in $\cE$,
thus by the Fatou property $(h, \p_t h)$ is a solution of \eqref{eq:phi4-lind} on $[T_0, \infty)$ 
belonging to the space $N_{\gamma}(\cE)$.
\end{proof} 


It turns out that if the time derivative of the forcing term decays, then we can substantially
improve the bounds provided by the last lemma.
This was pointed out to us by Y. Martel.
\begin{lemma}
\label{lem:phi4-lin-dt}
For any {$\gamma > 1$} and $\beta \in (2, \gamma + 1)$ there exists
$C = C(\beta, \gamma) > 0$ and $T_0 = T_0(\beta, \gamma)$ such that for all $(x_1, x_2)$
satisfying \eqref{eq:x1-x2-cond-1}, \eqref{eq:x1-x2-cond-2} and \eqref{eq:x1-x2-cond-3},
and for all $f \in N_{\gamma}(L^2)$ such that $\partial_t f \in N_{\gamma + 1}(L^2)$,
the system \eqref{eq:phi4-lind}--\eqref{eq:phi4-lind-orth} has a unique solution $(h, \lambda_1, \lambda_2)$ and
\begin{align}
\label{eq:phi4-step1-lin-dt}
\|(h, \partial_t h)\|_{N_{\gamma}(\cE)} + \sum_{j=1}^2
\big\|\lambda_j + \|\partial_x H\|_{L^2}^{-2}\la \partial_x H_j, f\ra \big\|_{{\mathbf W}_{\beta} \cap N_{\gamma+1}} \leq C \big(\|f\|_{N_{\gamma}(L^2)} + \|\partial_t f\|_{N_{\gamma + 1}(L^2)}\big).
\end{align}
{If $\gamma =1$, the same result holds without the inclusion of the ${\mathbf W}_{\beta}$ norm on the left-hand side of~\eqref{eq:phi4-step1-lin}.} 
\end{lemma}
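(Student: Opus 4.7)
The plan is to reduce to the previous lemma by exploiting an integration by parts in time, which removes the $\la \partial_t h, f\ra$ contribution to the energy estimate in favor of $-\la h, \partial_t f\ra$. This effectively shifts one power of $t$ from the decay required of $f$ to the decay required of $\partial_t f$, matching the new hypothesis.

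Specifically, I would introduce the modified energy
\[
\tilde I(t) := I(t) - \la h(t), f(t)\ra,
\]
where $I(t)$ is the modified energy (including the localized momentum correction) used in the proof of Lemma~\ref{lem:phi4-step1-lin}. By the coercivity \eqref{eq:I-coercive}, Cauchy--Schwarz, and Young's inequality, $\tilde I(t)$ remains comparable to $\|(h(t), \partial_t h(t))\|_\cE^2$ modulo a lower order term controlled by $\|f(t)\|_{L^2}^2$. Differentiating gives
\[
\tilde I'(t) = I'(t) - \la \partial_t h(t), f(t)\ra - \la h(t), \partial_t f(t)\ra,
\]
and combining \eqref{eq:dtI-2} with \eqref{eq:first-line} shows that the $\la \partial_t h, f\ra$ piece of $I'(t)$ is cancelled, leaving
\[
|\tilde I'(t)| \le |\la h(t), \partial_t f(t)\ra| + C t^{-1}\|f(t)\|_{L^2}\|(h(t), \partial_t h(t))\|_\cE + c t^{-1}\|(h(t), \partial_t h(t))\|_\cE^2,
\]
where $c$ can be made arbitrarily small by choosing $T_0$ large.

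Next I would integrate this inequality on $[t, \infty)$, using that $\tilde I(t) \to 0$ as $t \to \infty$ because $(h, \partial_t h) \in N_\gamma(\cE)$ with $\gamma \ge 1$ and $f \in N_\gamma(L^2)$, so $|\la h(t), f(t)\ra| \lesssim t^{-2\gamma} \to 0$. Weighting by the appropriate powers of $t$, Cauchy--Schwarz gives
\[
\int_t^\infty |\la h(s), \partial_t f(s)\ra|\ud s \le \|(h, \partial_t h)\|_{N_\gamma(\cE)}\|\partial_t f\|_{N_{\gamma+1}(L^2)}\int_t^\infty s^{-2\gamma - 1}\ud s \lesssim t^{-2\gamma}\|(h, \partial_t h)\|_{N_\gamma(\cE)}\|\partial_t f\|_{N_{\gamma+1}(L^2)},
\]
and the $t^{-1}\|f\|_{L^2}\|(h, \partial_t h)\|_\cE$ term similarly integrates to $\lesssim t^{-2\gamma}\|f\|_{N_\gamma(L^2)}\|(h, \partial_t h)\|_{N_\gamma(\cE)}$. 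The remaining $c t^{-1}\|(h, \partial_t h)\|_\cE^2$ contribution is absorbed exactly as in Step~3 of Lemma~\ref{lem:phi4-step1-lin}. Young's inequality then yields
\[
\|(h, \partial_t h)\|_{N_\gamma(\cE)} \le C\big(\|f\|_{N_\gamma(L^2)} + \|\partial_t f\|_{N_{\gamma+1}(L^2)}\big).
\]

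With this sharpened energy bound in hand, Steps~1 and 4 of the proof of Lemma~\ref{lem:phi4-step1-lin} carry over almost verbatim: the algebraic identities \eqref{eq:proj-H1}--\eqref{eq:proj-H2} express $\lambda_j + \|\partial_x H\|_{L^2}^{-2}\la \partial_x H_j, f\ra$ in terms of quantities that are either bilinear in $(h, \partial_t h)$ or quadratic in $(x_j', x_j'', \la \partial_x H_1, \partial_x H_2\ra)$, and inserting the new $N_\gamma(\cE)$ estimate gives the ${\mathbf W}_\beta \cap N_{\gamma + 1}$ bound (for $\gamma > 1$) or just the $N_{\gamma + 1}$ bound (for $\gamma = 1$). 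Existence is then obtained by the same approximation scheme as before: solve \eqref{eq:phi4-lind} backwards on $[T_0, t_n]$ with zero terminal data along a sequence $t_n \to \infty$, use the uniform $N_\gamma(\cE)$ estimate, and extract a weak-$\cE$ limit of the initial values at $T_0$. The main subtlety---essentially the only new ingredient compared to Lemma~\ref{lem:phi4-step1-lin}---is verifying the vanishing of the boundary term $\la h(t), f(t)\ra$ at infinity, which is immediate from $\gamma \ge 1$; no additional mechanism is needed.
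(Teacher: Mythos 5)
Your proposal is correct and follows essentially the same route as the paper: the paper's proof likewise introduces the modified functional $\wt I(t) = I(t) - \la h(t), f(t)\ra$, uses \eqref{eq:dtI-2} and \eqref{eq:first-line} to trade $\la \partial_t h, f\ra$ for $\la h, \partial_t f\ra$, integrates and absorbs exactly as in Lemma~\ref{lem:phi4-step1-lin}, and then repeats the Lagrange-multiplier and existence steps verbatim. No substantive difference.
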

\begin{proof}
\textbf{Step 1.} (First estimate of Lagrange multipliers.)
As in the proof of the previous lemma, we arrive at \eqref{eq:l1-l2-bound}.

\textbf{Step 2.} (Energy estimate.) We prove the bound on $(h, \partial_t h)$. We consider an energy functional
slightly different than in the proof of the previous lemma:
\begin{equation}
\label{eq:Itil-def}
\wt I(t) := I(t)  {-} \la h(t), f(t) \ra.
\end{equation}
By coercivity $\|(h, \partial_t h)\|_{\cE}^2 \lesssim \wt I + \|h\|_{L^2}\|f\|_{L^2}$.
Observe that
\begin{equation}
\Big|\dd t\la h, f \ra - \la \partial_t h, f \ra \Big| \leq \|h\|_{L^2}\|\partial_t f\|_{L^2}.
\end{equation}
From \eqref{eq:dtI-2} we have
\begin{equation}
\begin{aligned}
\big|\wt I' - \la \partial_t h, \lambda_1\partial_x H_1+\lambda_2\partial_x H_2 \ra\big|
\leq \|h\|_{{L^2}}\|\partial_t f\|_{{L^2}}+ Ct^{-1}\|f\|_{L^2}\|(h, \partial_t h)\|_{\cE} + ct^{-1}\|(h, \partial_t h)\|_{\cE}^2,
\end{aligned}
\end{equation}
so \eqref{eq:first-line} yields
\begin{equation}
\begin{aligned}
\big| \wt I'\big|
\leq \|h\|_{{L^2}}\|\partial_t f\|_{{L^2}}+ Ct^{-1}\|f\|_{L^2}\|(h, \partial_t h)\|_{\cE} + ct^{-1}\|(h, \partial_t h)\|_{\cE}^2,
\end{aligned}
\end{equation}
and we conclude as in Lemma~\ref{lem:phi4-step1-lin}.

\textbf{Step 3.} (Refined estimate of Lagrange multipliers.) This can be done similarly as in the proof of Lemma~\ref{lem:phi4-step1-lin}.
\end{proof}


{In the next lemma, we compare solutions $(h, \lambda_1, \lambda_2)$ of~\eqref{eq:phi4-lind},~\eqref{eq:phi4-lind-orth} as in Lemma~\ref{lem:phi4-step1-lin} associated to different trajectories $(x_1, x_2)$ and different forcing $f$. We first introduce some notation. Given trajectories $(x_1(t), x_2(t))$ satisfying~\eqref{eq:x1-x2-cond-1}, \eqref{eq:x1-x2-cond-2}, \eqref{eq:x1-x2-cond-3}, we define  $y(t) = (y_1(t), y_2(t))$ by 
\EQ{ \label{eq:y-def} 
y_1(t) := x_1(t) + \log(A t), \quad y_2(t) := x_2(t) - \log(At) 
}
where we remind the reader that $A= \sqrt{2} \| \p_x H \|_{L^2}^{-1} \kappa$, {see \eqref{def AAA}} {and ~\eqref{eq:k-def}}. 
}

\begin{lemma}
\label{lem:path-dep}
For any $\nu, \gamma > 1$ and $\beta \in (2, \nu + \gamma)$ there exist $C = C(\gamma, \nu, \beta)$ and $T_0 = T_0(\gamma, \nu, \beta)$ such that the following holds.
Let $(x_1, x_2)$ and $(\sh x_1, \sh x_2)$ be two pairs of trajectories satisfying
\eqref{eq:x1-x2-cond-1}, \eqref{eq:x1-x2-cond-2}, \eqref{eq:x1-x2-cond-3} and $\|\sh y - y\|_{S_\nu} \leq 1$, {for $y, \sh y$ as in~\eqref{eq:y-def}}.
Let $(h, \lambda_1, \lambda_2)$ be the solution of \eqref{eq:phi4-lind}
and $(\sh{h}, \sh{\lambda_1}, \sh{\lambda_2})$ the solution of \eqref{eq:phi4-lind}--\eqref{eq:phi4-lind-orth}
with $(\sh x_1, \sh x_2)$ instead of $(x_1, x_2)$ and $\sh f$ instead of $f$. Then
\begin{equation}
\label{eq:path-dep}
\begin{aligned}
&\big\|\big(\sh \lambda_j + \|\partial_x H\|_{L^2}^{-2}\la \partial_x \sh H_j, \sh f\ra \big) - \big(\lambda_j + \|\partial_x H\|_{L^2}^{-2}\la \partial_x H_j, f\ra \big)\big\|_{N_{\gamma + \nu}\cap {\mathbf W}_\beta} \\
&\qquad+\|(\sh h - h, \partial_t(\sh h - h))\|_{N_{\gamma + \nu - 1}(\cE)} \\ 
&\qquad\leq C\big(\|\sh y - y\|_{S_\nu}\big(\|f\|_{N_{\gamma + 1}(L^2)}+\|\sh f\|_{N_{\gamma + 1}(L^2)}\big) + \|\sh f - f\|_{N_{\gamma + \nu}(L^2)}\big).
\end{aligned}
\end{equation}
\end{lemma}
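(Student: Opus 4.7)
The plan is to derive a linear wave equation satisfied by the difference $k := \sh h - h$ and, after a small orthogonality correction, feed it to Lemma~\ref{lem:phi4-step1-lin} (or to Lemma~\ref{lem:phi4-lin-dt} if a matching bound on the $\p_t$ of the new forcing is available). The principal obstacle, already flagged in the remark preceding the statement, is that $h$ and $\sh h$ satisfy orthogonality conditions against \emph{different} pairs of functions, namely $\p_x H_j(t)$ versus $\p_x \sh H_j(t)$, so $k$ itself does not lie in the codimension-two subspace on which the linear theory was developed.

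First I would subtract the two copies of~\eqref{eq:phi4-lind}, rewriting everything with the linear operator based at $(\sh x_1, \sh x_2)$, to obtain
\begin{equation}
\p_t^2 k - \p_x^2 k + U''(1-\sh H_1 + \sh H_2)\, k = F + \sh\lambda_1 \p_x \sh H_1 + \sh\lambda_2 \p_x \sh H_2 - \lambda_1 \p_x H_1 - \lambda_2 \p_x H_2,
\end{equation}
with $F := (\sh f - f) + \bigl(U''(1-H_1+H_2) - U''(1-\sh H_1+\sh H_2)\bigr) h$. Since $\sh x_j - x_j = \sh y_j - y_j$, the mean value theorem combined with Lemma~\ref{lem:exp-cross-term} and the \emph{a priori} bound $\|h\|_{N_\gamma(\cE)} \lesssim \|f\|_{N_{\gamma+1}(L^2)}$ from Lemma~\ref{lem:phi4-step1-lin} shows that $\|F\|_{N_{\gamma+\nu}(L^2)}$ is controlled by the right-hand side of~\eqref{eq:path-dep}. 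To restore orthogonality I set $\wt k := k - \alpha_1 \p_x \sh H_1 - \alpha_2 \p_x \sh H_2$, where $(\alpha_1, \alpha_2)$ solve the $2\times 2$ diagonally dominant Gram system enforcing $\la \p_x \sh H_j, \wt k\ra = 0$. Because $\la \p_x \sh H_j, k\ra = -\la \p_x \sh H_j - \p_x H_j, h\ra$ and $|\p_x \sh H_j - \p_x H_j| \lesssim |\sh y_j - y_j|\,|\p_x^2 H_j|$ by the mean value theorem, one finds $|\alpha_j| + |\alpha_j'| + |\alpha_j''| \lesssim t^{-(\gamma+\nu)}$ (the two derivative bounds coming from differentiating the defining relation twice and using~\eqref{eq:phi4-lind} to eliminate $\p_t^2 h$). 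Substituting $k = \wt k + \alpha_j \p_x \sh H_j$ then presents $\wt k$ as a solution of~\eqref{eq:phi4-lind}--\eqref{eq:phi4-lind-orth} at base point $(\sh x_1, \sh x_2)$, with augmented forcing $\wt F \in N_{\gamma+\nu}(L^2)$ still controlled by the right-hand side of~\eqref{eq:path-dep}, and with Lagrange multipliers $\sh\lambda_j - \lambda_j$, after pushing the mismatch $\lambda_j \p_x H_j = \lambda_j \p_x \sh H_j - \lambda_j(\p_x \sh H_j - \p_x H_j)$ into the forcing.

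Applying Lemma~\ref{lem:phi4-step1-lin} to $\wt k$ at effective decay $\gamma_{\mathrm{eff}} = \gamma + \nu - 1$ (for which the admissible $\beta$-range is $(2, \gamma+\nu)$, matching the hypothesis) then yields at once the estimate on $(k, \p_t k)$ in $N_{\gamma+\nu-1}(\cE)$, together with a bound in $N_{\gamma+\nu} \cap \mathbf{W}_\beta$ on $(\sh\lambda_j - \lambda_j) + \|\p_x H\|_{L^2}^{-2}\la \p_x \sh H_j, \wt F\ra$. The step I expect to be the main obstacle, beyond careful bookkeeping, is this last one: one must re-expand $\la \p_x \sh H_j, \wt F\ra$ so that the precise quantity appearing on the left of~\eqref{eq:path-dep}---namely the difference of the renormalised multipliers from Lemma~\ref{lem:phi4-step1-lin} applied separately to $(\sh h, \sh f)$ and $(h, f)$---emerges up to an admissible remainder. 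This requires a first-order expansion of $\la \p_x \sh H_j - \p_x H_j, f\ra$ in $\sh y - y$ combined with the integration-by-parts trick from Step~4 of the proof of Lemma~\ref{lem:phi4-step1-lin} to propagate the $\mathbf{W}_\beta$ control through the quadratic cross terms.
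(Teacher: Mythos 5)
Your reduction reproduces the paper's overall architecture (project out the kernel directions to restore orthogonality, then apply Lemma~\ref{lem:phi4-step1-lin} to the difference at effective decay rate $\gamma+\nu-1$; the paper does the same, only it corrects $\sh h$ against $\partial_x H_j$ rather than correcting $\sh h - h$ against $\partial_x \sh H_j$, which is immaterial). With your forcing $\wt F$ in $N_{\gamma+\nu}(L^2)$ this does give the $N_{\gamma+\nu-1}(\cE)$ bound on $\sh h - h$ and the $N_{\gamma+\nu}$ bound on the multiplier difference. The genuine gap is the ${\mathbf W}_\beta$ part of \eqref{eq:path-dep}, which is exactly the step you defer. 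The extra forcing created by your reduction, namely $\big(U''(1-H_1+H_2)-U''(1-\sh H_1+\sh H_2)\big)h$ and the correction term $\alpha_j''\,\partial_x \sh H_j$, has projections onto $\partial_x \sh H_j$ of size precisely $t^{-(\gamma+\nu)}$. A scalar function decaying like $t^{-(\gamma+\nu)}$ is \emph{not} automatically in ${\mathbf W}_\beta$: in $W_{\alpha,\beta}$ it produces $t^{\beta+1-\gamma-\nu}$, which is unbounded whenever $\beta>\gamma+\nu-1$, a range permitted by the hypotheses (only an extra power, i.e. membership in $N_{\gamma+\nu+1}$, is automatically negligible). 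So Lemma~\ref{lem:phi4-step1-lin} applied to $\wt k$ controls $(\sh\lambda_j-\lambda_j)+\|\partial_x H\|_{L^2}^{-2}\la\partial_x\sh H_j,\wt F\ra$ in ${\mathbf W}_\beta$, but converting this into the stated quantity requires showing that these borderline projections lie in ${\mathbf W}_\beta$, and your proposal does not do this.

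Moreover, your diagnosis of where the difficulty sits is misdirected: the term $\la\partial_x\sh H_j-\partial_x H_j, f\ra$ you single out is of size $t^{-(\gamma+\nu+1)}$ and harmless. The paper's proof handles the actual dangerous terms by an algebraic cancellation: it uses the identity \eqref{eq:U'''-identity} to rewrite $U'''(H_j)(\partial_x H_j)^2$ as $L$ applied to $\partial_x^2 H_j$, then the equation \eqref{eq:phi4-lind} to replace $(\partial_x^2-U'')$ acting on the solution by $\partial_t^2$ plus controlled terms, and finally the refined second-derivative expansion \eqref{eq:a-coeff-bound} of the projection coefficients, so that $(\sh y_j-y_j)\la\partial_x^2 H_j,\partial_t^2 \sh h\ra$ is cancelled exactly (up to $N_{\gamma+\nu+1}$) by $a_j''\|\partial_x H\|_{L^2}^2$; note that your analogue of \eqref{eq:a-coeff-bound} must be this refined expansion, not merely the pointwise bound $|\alpha_j''|\lesssim t^{-(\gamma+\nu)}$, which for the reason above does not suffice for ${\mathbf W}_\beta$. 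An alternative would be a genuine integration by parts in time of $\int_t^\tau s^\alpha(\sh y_j-y_j)\la\partial_x^2 H_j,\partial_t^2 h\ra\,\vd s$, moving both time derivatives off $h$ and tracking the boundary terms, but in your write-up neither this nor the cancellation is carried out, so the proof as proposed establishes only the $N$-norm estimates and not the ${\mathbf W}_\beta$ estimate, which is the part needed later to close the bifurcation-equation contraction.
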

\begin{proof}
Let
\begin{equation}
\label{eq:nat-h-def}
\nt h = \sh h + a_1 \partial_x H_1 + a_2 \partial_x H_2
\end{equation}
be the projection of $\sh h$ on the subspace orthogonal to $\partial_x H_1$ and $\partial_x H_2$.
The idea is to apply Lemma~\ref{lem:phi4-step1-lin} in order to obtain an estimate of $\nt h - h$.

\noindent
\textbf{Step 1.} We prove that for $j \in \{1, 2\}$
\begin{equation}
\label{eq:a-coeff-bound}
\begin{aligned}
&\big\| \|\partial_x H\|_{L^2}^2 a_j + (\sh y_j - y_j)\la \partial_x^2 H_j, \sh h\ra\big\|_{N_{\gamma + \nu+1}} \\
+&\big\| \|\partial_x H\|_{L^2}^2 a_j' + (\sh y_j - y_j)\la \partial_x^2 H_j, \partial_t \sh h\ra\big\|_{N_{\gamma + \nu+1}} \\
+&\big\| \|\partial_x H\|_{L^2}^2 a_j'' + (\sh y_j - y_j)\la \partial_x^2 H_j, \partial_t^2 \sh h\ra\big\|_{N_{\gamma + \nu+1}} \\
&\quad \lesssim \|(\sh h, \partial_t \sh h)\|_{N_\gamma(\cE)}\|\sh y - y\|_{S_\nu}
\lesssim \|\sh f\|_{N_{\gamma + 1}(L^2)}\|\sh y - y\|_{S_\nu}.
\end{aligned}
\end{equation}
Note that \eqref{eq:a-coeff-bound} in particular implies
\begin{equation}
 \|a_j \|_{N_{\gamma + \nu}} +\| a_j' \|_{N_{\gamma + \nu}} +\|a_j'' \|_{N_{\gamma + \nu}}
  \lesssim \|(\sh h, \partial_t \sh h)\|_{N_\gamma(\cE)}\|\sh y - y\|_{S_\nu}
\lesssim \|\sh f\|_{N_{\gamma + 1}(L^2)}\|\sh y - y\|_{S_\nu}.
\end{equation}

In order to prove \eqref{eq:a-coeff-bound}, we multiply \eqref{eq:nat-h-def} by $\partial_x H_j$ and integrate:
\begin{equation}
\begin{aligned}
a_1\|\partial_x H\|_{L^2}^2 + a_2\la \partial_x H_1, \partial_x H_2\ra + \la \partial_x H_1, \sh h\ra &= 0, \\
a_1 \la \partial_x H_1, \partial_x H_2\ra + a_2\|\partial_x H\|_{L^2}^2 + \la \partial_x H_2, \sh h\ra &= 0.
\end{aligned}
\end{equation}
Observing that $\la \partial_x H_j, \sh h\ra = \la \partial_x H_j - \partial_x \sh H_j, \sh h\ra$, we obtain
\begin{equation}
\label{eq:aj}
\begin{aligned}
a_1\|\partial_x H\|_{L^2}^2 + a_2\la \partial_x H_1, \partial_x H_2\ra &= \la \partial_x \sh H_1- \partial_x H_1, \sh h\ra, \\
a_1 \la \partial_x H_1, \partial_x H_2\ra + a_2\|\partial_x H\|_{L^2}^2  &= \la \partial_x \sh H_2- \partial_x H_2, \sh h\ra.
\end{aligned}
\end{equation}
This is a strictly diagonally dominant linear system for $a_1$ and $a_2$.
Since
\begin{equation}
|\la \partial_x H_j - \partial_x \sh H_j, \sh h\ra| \leq \|\partial_x H_j - \partial_x \sh H_j\|_{L^2}\|\sh h\|_{L^2} \lesssim |\sh y - y|\|\sh h\|_{L^2},
\end{equation}
we obtain $|a_j| \lesssim |\sh y - y|\|\sh h\|_{L^2}$.
Observe that
\begin{equation}
|\la \partial_x \sh H_1 - \partial_x H_1, \sh h\ra + (\sh y_1 - y_1)\la \partial_x^2 H_1, \sh h\ra| \lesssim |\sh y_1 - y_1|^2 \|\sh h\|_{L^2}.
\end{equation}
From the first line in \eqref{eq:aj} we thus get
\begin{equation}
\big|\|\partial_x H\|_{L^2}^2 a_1 + (\sh y_1 - y_1)\la \partial_x^2 H_1, \sh h\ra\big| \lesssim |\sh y_1 - y_1| \|\sh h\|_{L^2}
\big(|\sh y_1 - y_1| + |\la \partial_x H_1, \partial_x H_2\ra|\big),
\end{equation}
which implies the required estimate for $a_1$. The estimate for $a_2$ is obtained analogously.

In order to prove the bound on $|a_j'|$, we differentiate in time \eqref{eq:aj}.
We obtain
\begin{equation}
\label{eq:dtaj}
\begin{aligned}
a_1'\|\partial_x H\|_{L^2}^2 + a_2'\la \partial_x H_1, \partial_x H_2\ra &= \dd t\la \partial_x \sh H_1- \partial_x H_1, \sh h\ra
- a_2 \dd t\la \partial_x H_1, \partial_x H_2\ra, \\
a_1' \la \partial_x H_1, \partial_x H_2\ra + a_2'\|\partial_x H\|_{L^2}^2  &= \dd t\la \partial_x \sh H_2- \partial_x H_2, \sh h\ra
- a_1 \dd t\la \partial_x H_1, \partial_x H_2\ra.
\end{aligned}
\end{equation}
Note that $|\la \partial_t(\partial_x \sh H_1- \partial_x H_1), \sh h\ra| \lesssim t^{-1}|\sh y_1 - y_1|\|\sh h\|_{L^2}$.
With this observation, a similar computation as above leads to the required bounds on $a_1'$ and $a_2'$.

Differentiating again, we obtain
\begin{equation}
\label{eq:dtdtaj}
\begin{aligned}
a_1''\|\partial_x H\|_{L^2}^2 + a_2''\la \partial_x H_1, \partial_x H_2\ra &= \frac{\ud^2}{\vd t^2}\la \partial_x \sh H_1- \partial_x H_1, \sh h\ra
- a_2 \frac{\ud^2}{\vd t^2}\la \partial_x H_1, \partial_x H_2\ra-2a_2' \frac{\ud}{\vd t}\la \partial_x H_1, \partial_x H_2\ra, \\
a_1'' \la \partial_x H_1, \partial_x H_2\ra + a_2''\|\partial_x H\|_{L^2}^2  &= \frac{\ud^2}{\vd t^2}\la \partial_x \sh H_2- \partial_x H_2, \sh h\ra
- a_1 \frac{\ud^2}{\vd t^2}\la \partial_x H_1, \partial_x H_2\ra-2a_1' \frac{\ud}{\vd t}\la \partial_x H_1, \partial_x H_2\ra.
\end{aligned}
\end{equation}
All the terms of the right hand side of the first line are treated similarly as before, except for
$\la \partial_x \sh H_j - \partial_x H_j, \partial_t^2 \sh h\ra$.
For this term, we write
\begin{equation}
\big|\la \partial_x \sh H_j - \partial_x H_j, \partial_t^2 \sh h\ra + (\sh y_j -y_j)\la \partial_x^2 H_j, \partial_t^2\sh h\ra\big|
\lesssim |\sh y_j - y_j|^2\|\partial_t^2 \sh h\|_{H^{-1}}.
\end{equation}
Invoking the equation~\eqref{eq:phi4-lind} satisfied by $\sh h$ and making use
of the bounds~\eqref{eq:l1-l2-bound} for $\sh \lambda_1, \sh \lambda_2$, we obtain
\begin{equation}
\big\|\la \partial_x \sh H_j - \partial_x H_j, \partial_t^2 \sh h\ra + (\sh y_j -y_j)\la \partial_x^2 H_j, \partial_t^2\sh h\ra\big\|_{N_{\gamma + \nu + 1}} \lesssim \|\sh f\|_{N_{\gamma + 1}}\|\sh y - y\|_{S_\nu}.
\end{equation}


\noindent
\textbf{Step 2.}
We write the equation satisfied by $\nt h - h$:
\begin{equation}
\label{eq:nth}
\partial_t^2 (\nt h - h) - \partial_x^2 (\nt h - h) + U''(1 - H_1 + H_2)(\nt h - h) = \nt f + \nt \lambda_1 \partial_x H_1
+ \nt \lambda_2 \partial_x H_2,
\end{equation}
where
\begin{equation}
\label{eq:diff-expansion}
\begin{aligned}
\nt f &:=
(\partial_t^2 \sh h - \partial_x^2 \sh h + U''(1 - \sh H_1 + \sh H_2)\sh h - \sh \lambda_1\partial_x H_1 - \sh \lambda_2\partial_x H_2) \\
& \quad - (\partial_t^2 h - \partial_x^2 h + U''(1 - H_1 + H_2) h - \lambda_1\partial_x H_1 - \lambda_2\partial_x H_2) \\
&\quad + (\partial_t^2 \nt h - \partial_t^2 \sh h - a_1''\partial_x H_1 - a_2''\partial_x H_2) - (\partial_x^2 \nt h - \partial_x^2 \sh h) + U''(1 - \sh H_1 + \sh H_2)(\nt h - \sh h) \\
& \quad + (U''(1 - H_1 + H_2) - U''(1 - \sh H_1 + \sh H_2))\nt h + a_1''\partial_x H_1 + a_2''\partial_x H_2,
\end{aligned}
\end{equation}
{and}
\begin{equation}
\label{eq:ntlambda}
\nt \lambda_j := \sh \lambda_j - \lambda_j.
\end{equation}
According to Lemma~\ref{lem:phi4-step1-lin}, we should
\begin{enumerate}[(i)]
\item estimate $\nt f$ in $N_{\gamma + \nu}(L^2)$,
\item estimate $\la \partial_x H_j, \nt f\ra$ in ${\mathbf W}_\beta$.
\end{enumerate}
%
We expand each line of \eqref{eq:diff-expansion}. Most terms can be estimated in $N_{\gamma + \nu + 1}(L^2)$,
which takes care of (i) and (ii) above, so we will call such terms ``negligible''. The first line equals
\begin{equation}
\sh f + \sh\lambda_1(\partial_x \sh H_1 - \partial_x H_1) + \sh\lambda_2(\partial_x \sh H_2 - \partial_x H_2).
\end{equation}
The second and third terms are negligible since
\begin{equation}
\|\sh\lambda_j(\partial_x \sh H_j - \partial_x H_j)\|_{N_{{\nu}  + \gamma + 1}} \lesssim \|\sh \lambda_j\|_{N_{\gamma + 1}}\|\sh y_j - y_j\|_{S_\nu}
\lesssim \|\sh y_j - y_j\|_{S_\nu}\|\sh f\|_{N_{\gamma + 1}(L^2)}.
\end{equation}
The second line of \eqref{eq:diff-expansion} equals $-f$.
Now we expand the third line.
We have
\begin{equation}
\begin{aligned}
\partial_t \nt h& = \partial_t \sh h + \sum_{j=1}^2 \big(a_j'\partial_x H_j - a_j x_j'\partial_x^2 H_j\big), \\
\partial_t^2 \nt h &= \partial_t^2 \sh h + \sum_{j=1}^2 \big(a_j''\partial_x H_j - 2a_j'x_j'\partial_x^2 H_j - a_jx_j''\partial_x^2 H_j + a_j(x_j')^2 \partial_x^3 H_j\big).
\end{aligned}
\end{equation}
Thus, it follows from \eqref{eq:a-coeff-bound} that the term $\partial_t^2 \nt h - \partial_t^2 \sh h - a_1''\partial_x H_1 - a_2''\partial_x H_2$ is negligible.
The remaining part of the {third} line equals
\begin{equation}
\begin{aligned}
\sum_{j=1}^2 a_j\big({-}\partial_x^2 + U''(1-\sh H_1+\sh H_2)\big)\partial_x H_j &= \sum_{j=1}^2 a_j\big(U''(1-\sh H_1+\sh H_2) - U''(H_j)\big)\partial_x H_j \\
&= \sum_{j=1}^2 a_j\big(U''(1-\sh H_1+\sh H_2) - U''(1- H_1 + H_2)\big)\partial_x H_j \\
&+\sum_{j=1}^2 a_j\big(U''(1-H_1+H_2) - U''(H_j)\big)\partial_x H_j.
\end{aligned}
\end{equation}
The last line is negligible by \eqref{eq:inter-bound-2} and \eqref{eq:a-coeff-bound}.
The other line can be estimated in $N_{\gamma + 2\nu}$, hence is negligible as well.

Finally, consider the fourth line of \eqref{eq:diff-expansion}. We have
\begin{equation}
\begin{aligned}
\|(U''(1 - H_1 + H_2) - U''(1 - \sh H_1 + \sh H_2))\nt h\|_{N_{\gamma + \nu}(L^2)} &\lesssim
\|\sh y - y\|_{S_{\nu}}(\|\sh h\|_{N_\gamma(L^2)} + \|a_1\|_{N_\gamma} + \|a_2\|_{N_\gamma}) \\
&\lesssim \|\sh y - y\|_{S_{\nu}}\|\sh f\|_{N_{\gamma + 1}(L^2)}.
\end{aligned}
\end{equation}
However, it turns out that this term is not negligible, and we have to estimate carefully
its projection on $\partial_x H_j$ {for this term's contribution to (ii)}. In what follows, ``$\simeq$'' means ``up to terms bounded in $N_{\gamma + \nu + 1}(L^2)$
by the right hand side of \eqref{eq:path-dep}''. By \eqref{eq:U''-taylor} we have
\begin{equation}
\begin{aligned}
\|U''(1 - H_1 + H_2) - U''(1 - \sh H_1 + \sh H_2) - ((\sh H_1 - H_1) - (\sh H_2 - H_2))U'''(1 - H_1 + H_2)\|_{N_{2\nu}(L^\infty)} \\
\lesssim \|\sh y - y\|_{S_\nu}^2 \lesssim \|\sh y - y\|_{S_\nu}.
\end{aligned}
\end{equation}
We also have, using Taylor expansions,
\begin{equation}
\|\sh H_j - H_j + (\sh y_j - y_j)\partial_x H_j\|_{N_{2\nu}(L^\infty)} \lesssim \|\sh y - y\|_{S_\nu}^2 \lesssim \|\sh y - y\|_{S_\nu}.
\end{equation}
From these two inequalities and $\|\nt h - \sh h\|_{N_{\gamma + \nu}(L^2)} \lesssim \|\sh y - y\|_{S_\nu}\|\sh f\|_{N_{ \gamma {+1}} (L^2)}$
we deduce that, up to negligible terms, the fourth line of \eqref{eq:diff-expansion} equals
\begin{equation}
\begin{aligned}
\sh h U'''(1 - H_1 + H_2)(-(\sh y_1 - y_1)\partial_x H_1 + (\sh y_2 - y_2)\partial_x H_2) \simeq \\
\simeq \sh h\big((\sh y_1 - y_1)U'''(H_1)\partial_x H_1 + (\sh y_2 - y_2)U'''(H_2)\partial_x H_2\big),
\end{aligned}
\end{equation}
where the last approximate equality follows from the fact that $U'''$ is locally Lipschitz, thus for instance
\begin{equation}
\label{eq:replace-potential}
|(U'''(1 - H_1 + H_2) + U'''(H_1))\partial_x H_1| \lesssim |1 + H_2||\partial_x H_1| \ll t^{-1}.
\end{equation}
Up to negligible terms, the scalar product with $\partial_x H_1$ is
\begin{equation}
\int_{\bR} \sh h(\sh y_1 - y_1)U'''(H_1)(\partial_x H_1)^2\ud x = (\sh y_1 - y_1)\la \partial_x^2 H_1, (\partial_x^2 - U''(H_1))\sh h\ra,
\end{equation}
where for the last equality we use \eqref{eq:U'''-identity}. By an estimate analogous to \eqref{eq:replace-potential} but
with $\partial_x^2 H_1$ instead of $\partial_x H_1$ and $U''$ instead of $U'''$, the right-hand side is approximately equal to
\begin{equation}
\begin{aligned}
&(\sh y_1 - y_1)\la (\partial_x^2 - U''(1 - H_1 + H_2))\sh h, \partial_x^2 H_1\ra = \\
&=(\sh y_1 - y_1)\la \partial_x^2 H_1, \partial_t^2 \sh h - \sh f - \sh \lambda_1 \partial_x \sh H_1 - \sh \lambda_2 \partial_x \sh H_2\ra.
\end{aligned}
\end{equation}
We observe that, by \eqref{eq:a-coeff-bound}, the term $(\sh y_1 - y_1)\la \partial_x^2 H_1,\partial_t^2 \sh h\ra$ is cancelled,
up to negligible terms, by the scalar product of the term $a_1'' \partial_x H_1$ with $\partial_x H_1$.
The term $(\sh y_1 - y_1)\la \partial_x^2 H_1, \sh \lambda_1 \partial_x \sh H_1\ra$ is negligible due to ${\la}\partial_x^2 H, \partial_x H\ra = 0$. The term $(\sh y_1 - y_1)\la \partial_x^2 H_1, \sh \lambda_2 \partial_x \sh H_2\ra$ is negligible as well,
and so is the term $(\sh y_1 - y_1)\la \partial_x^2 H_1, \sh f\ra$.
In conclusion,
\begin{equation}
\la \partial_x H_1, \nt f\ra \simeq \la \partial_x H_1, \sh f - f\ra
\simeq \la \partial_x \sh H_1, \sh f\ra - \la \partial_x H_1, f\ra,
\end{equation}
and similarly for the projection on $\partial_x H_2$.
\end{proof}
\begin{lemma}
\label{lem:path-dep-dt}
For any $\gamma, \nu > 1$ and $\beta \in (2, \gamma + \nu)$
there exist $C = C(\gamma, \nu, \beta)$ and $T_0 = T_0(\gamma, \nu, \beta)$ such that the following holds.
Let $(x_1, x_2)$ and $(\sh x_1, \sh x_2)$ be two pairs of trajectories satisfying
\eqref{eq:x1-x2-cond-1}, \eqref{eq:x1-x2-cond-2} and \eqref{eq:x1-x2-cond-3}.
Let $(h, \lambda_1, \lambda_2)$ be the solution of \eqref{eq:phi4-lind}
and $(\sh{h}, \sh{\lambda_1}, \sh{\lambda_2})$ the solution of \eqref{eq:phi4-lind}--\eqref{eq:phi4-lind-orth}
with $(\sh x_1, \sh x_2)$ instead of $(x_1, x_2)$ and $\sh f$ instead of $f$. Then
\begin{align}
&\big\|\big(\sh \lambda_j + \|\partial_x H\|_{L^2}^{-2}\la \partial_x \sh H_j, \sh f\ra \big) - \big(\lambda_j + \|\partial_x H\|_{L^2}^{-2}\la \partial_x H_j, f\ra \big)\big\|_{N_{\gamma + \nu}\cap {\mathbf W}_{\beta}}\\
&\ + \|(\sh h - h, \partial_t(\sh h - h))\|_{N_{\gamma + \nu - 1}(\cE)}\\
&\ \leq C\big(\|\sh y - y\|_{S_\nu}\big(\|f\|_{N_{\gamma}(L^2)} + \|\partial_t f\|_{N_{\gamma + 1}(L^2)}
+\|\sh f\|_{N_{\gamma}(L^2)} + \|\partial_t \sh f\|_{N_{\gamma + 1}(L^2)}\big)
+\|\sh f - f\|_{N_{\gamma + \nu}(L^2)}\big).
\end{align}
\end{lemma}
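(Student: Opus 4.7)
The proof mirrors that of Lemma~\ref{lem:path-dep} with Lemma~\ref{lem:phi4-step1-lin} replaced by Lemma~\ref{lem:phi4-lin-dt}. I set $\nt h := \sh h + a_1 \p_x H_1 + a_2 \p_x H_2$, with $a_1, a_2$ chosen so that $\nt h$ satisfies the orthogonality conditions with respect to the trajectory $(x_1, x_2)$; then $\nt h - h$ solves~\eqref{eq:nth} with forcing $\nt f$ given by~\eqref{eq:diff-expansion} and Lagrange multipliers $\nt\lambda_j = \sh\lambda_j - \lambda_j$. The plan is to apply Lemma~\ref{lem:phi4-lin-dt} at weight $\gamma + \nu - 1 > 1$ (valid since $\gamma, \nu > 1$), for which I must establish the bounds $\|\nt f\|_{N_{\gamma + \nu - 1}(L^2)}$ and $\|\p_t \nt f\|_{N_{\gamma + \nu}(L^2)}$ as well as control of $\la \p_x H_j, \nt f\ra$ in ${\mathbf W}_\beta \cap N_{\gamma + \nu}$, each by the right-hand side of the claimed inequality.

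First, I re-derive the bounds~\eqref{eq:a-coeff-bound} on $a_j, a_j', a_j''$ from the algebraic identities~\eqref{eq:aj}--\eqref{eq:dtdtaj}, substituting the Lemma~\ref{lem:phi4-lin-dt} bound $\|(\sh h, \p_t \sh h)\|_{N_\gamma(\cE)} \lesssim \|\sh f\|_{N_\gamma(L^2)} + \|\p_t \sh f\|_{N_{\gamma + 1}(L^2)}$ in place of the estimate used in Lemma~\ref{lem:path-dep}. With these in hand, the bounds on $\|\nt f\|_{N_{\gamma + \nu - 1}(L^2)}$ and on $\la \p_x H_j, \nt f\ra$ in ${\mathbf W}_\beta$ follow by going through the four lines of~\eqref{eq:diff-expansion} exactly as in Step~2 of Lemma~\ref{lem:path-dep}, with the norm $\|\sh f\|_{N_{\gamma + 1}(L^2)}$ systematically replaced by the improved norm $\|\sh f\|_{N_\gamma(L^2)} + \|\p_t \sh f\|_{N_{\gamma + 1}(L^2)}$. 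In particular, the delicate cancellation in the fourth line, which exploits $(\p_x^2 - U''(H_j))\p_x H_j = 0$ together with~\eqref{eq:U'''-identity}, carries over verbatim.

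The principal new ingredient, and the main obstacle, is the bound on $\|\p_t \nt f\|_{N_{\gamma + \nu}(L^2)}$. I differentiate~\eqref{eq:diff-expansion} in $t$: most terms acquire an extra factor of $t^{-1}$ from the time dependence through $H_j, \sh H_j$, which accounts for the jump from weight $\gamma + \nu - 1$ to $\gamma + \nu$. The contribution of $\p_t(\sh f - f)$ is absorbed by the assumed norms on $\p_t \sh f, \p_t f$. Derivatives of expressions of the form $\sh \lambda_j(\p_x \sh H_j - \p_x H_j)$ are controlled by combining the small factor $\p_x \sh H_j - \p_x H_j = O(|\sh y_j - y_j|)$ with a bound on $\p_t \sh \lambda_j$ extracted by differentiating the identities~\eqref{eq:proj-H1}--\eqref{eq:proj-H2}, which avoids a fresh $L^2$ loss on $\p_t \sh f$. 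Derivatives of the third-line terms produce $a_j'''$; the required bound on $a_j'''$ in $N_{\gamma + \nu + 2}$ is obtained by differentiating~\eqref{eq:dtdtaj} once more and using~\eqref{eq:phi4-lind} to express $\p_t^3 \sh h$ in terms of $\p_t \sh f$ and quantities already controlled. Applying Lemma~\ref{lem:phi4-lin-dt} to $\nt h - h$ and unwinding $\nt h = \sh h + a_1 \p_x H_1 + a_2 \p_x H_2$ via the $a_j$-bounds completes the proof.
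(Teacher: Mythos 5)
Your framework (the projection $\nt h = \sh h + a_1\partial_x H_1 + a_2\partial_x H_2$, the difference equation \eqref{eq:nth} with forcing \eqref{eq:diff-expansion}, and re-deriving \eqref{eq:a-coeff-bound} with the Lemma~\ref{lem:phi4-lin-dt} bound on $\sh h$) is the right starting point, but your key step does not work: you propose to apply Lemma~\ref{lem:phi4-lin-dt} at weight $\gamma+\nu-1$ to the equation for $\nt h - h$, which forces you to bound $\|\partial_t \nt f\|_{N_{\gamma+\nu}(L^2)}$. Since $\nt f$ contains $\sh f - f$, this requires control of $\|\partial_t(\sh f - f)\|_{N_{\gamma+\nu}(L^2)}$, which is neither among the hypotheses nor present on the right-hand side of the claimed estimate. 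Your assertion that this contribution is ``absorbed by the assumed norms on $\partial_t \sh f, \partial_t f$'' is false: those norms live at weight $\gamma+1$, and since $\nu>1$ the space $N_{\gamma+\nu}$ is strictly stronger than $N_{\gamma+1}$, so at best you could invoke Lemma~\ref{lem:phi4-lin-dt} at weight $\gamma$, losing exactly the gain of $\nu-1$ that the lemma must deliver. This is not cosmetic: in the intended application (Proposition~\ref{prop:prop-mapping}(iii)) only $\|\sh f - f\|_{N_{\gamma+\nu}(L^2)}$ is available, so any proof that consumes $\partial_t(\sh f - f)$ does not prove the stated lemma. The same wrong turn is what generates your need for $a_j'''$, $\partial_t \sh\lambda_j$, and (through differentiating \eqref{eq:proj-H1}) third derivatives of the trajectories, none of which are controlled by \eqref{eq:x1-x2-cond-1}--\eqref{eq:x1-x2-cond-3}.

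The intended reading of ``using Lemma~\ref{lem:phi4-lin-dt} instead of Lemma~\ref{lem:phi4-step1-lin}'' is that the $\partial_t f$-improvement is applied only to the \emph{individual} solutions: Lemma~\ref{lem:phi4-lin-dt} gives $\|(\sh h,\partial_t\sh h)\|_{N_\gamma(\cE)}$ and, via \eqref{eq:l1-l2-bound}, $\|\sh\lambda_j\|_{N_\gamma}$, bounded by $\|\sh f\|_{N_\gamma(L^2)}+\|\partial_t \sh f\|_{N_{\gamma+1}(L^2)}$ (likewise for $h,\lambda_j$). The difference $\nt h - h$ is then handled exactly as in Lemma~\ref{lem:path-dep}, i.e.\ by the energy estimate of Lemma~\ref{lem:phi4-step1-lin} at weight $\gamma+\nu-1$, which requires only $\nt f \in N_{\gamma+\nu}(L^2)$ and $\la \partial_x H_j, \nt f\ra \in {\mathbf W}_\beta$ --- no time derivative of $\nt f$, hence no derivative of $\sh f - f$, no $a_j'''$, and no $\partial_t\sh\lambda_j$. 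One then repeats Step~2 of Lemma~\ref{lem:path-dep} with $\|\sh f\|_{N_{\gamma+1}(L^2)}$ replaced throughout by $\|\sh f\|_{N_\gamma(L^2)}+\|\partial_t\sh f\|_{N_{\gamma+1}(L^2)}$; the only term needing a second look is $\sh\lambda_j(\partial_x \sh H_j - \partial_x H_j)$, which is now only of size $t^{-\gamma-\nu}$ (since $\sh\lambda_j \in N_\gamma$ rather than $N_{\gamma+1}$): this still suffices for the $N_{\gamma+\nu}(L^2)$ bound on $\nt f$, and its projection onto $\partial_x H_k$ remains negligible because $\la \partial_x^2 H, \partial_x H\ra = 0$ removes the first-order contribution when $k=j$, while $\la \partial_x H_1,\partial_x H_2\ra = O(t^{-2}\log t)$ handles $k\neq j$.
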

\begin{proof}
The proof is similar to the previous one, but using Lemma~\ref{lem:phi4-lin-dt} instead of Lemma~\ref{lem:phi4-step1-lin}.
\end{proof}

\subsection{{Solving~\eqref{eq:phi4-step1} for given trajectories $(x_1, x_2)$}}  \label{s:contraction1} 
Let $\gamma \geq 1$. Given a pair of trajectories $(x_1, x_2)$ and $(g, \partial_t g) \in N_\gamma(\cE)$, we define
\begin{equation}
(\Lambda_1(x_1, x_2, g), \Lambda_{{2}}(x_1, x_2, g), \Psi(x_1, x_2, g)) = (\lambda_1, \lambda_2, h)
\end{equation}
as the solution of the equation
\begin{equation}
\label{eq:Phi_g}
\begin{aligned}
&\partial_t^2 h - \partial_x^2 h + U''(1 - H_1 + H_2)h \\
&= (\lambda_1-x_1'') \partial_x H_1 +(x_1')^2\partial_x^2 H_1+ (\lambda_2+x_2'') \partial_x H_2 - (x_2')^2\partial_x^2 H_2 \\
&- U'(1-H_1 + H_2) - U'(H_1) + U'(H_2) \\
&- U'(1-H_1 + H_2 + g) + U'(1-H_1 + H_2) + U''(1 - H_1 + H_2)g
\end{aligned}
\end{equation}
satisfying the orthogonality conditions $\la \partial_x H_1, h\ra = \la \partial_x H_2, h\ra = 0$.
Note that we do not require the argument $g$ to satisfy any orthogonality conditions.

\begin{proposition}
\label{prop:prop-mapping}
The mapping $(\Lambda_1, \Lambda_2, \Psi)$ has the following properties.
\begin{enumerate}[(i)]
\item
For any {$\gamma \in (1, 2)$} and $\beta \in (2, \gamma + 1)$ there exist
$C_{{1}} = C_{{1}}(\beta, \gamma) > 0$ and $T_0 = T_0(\beta, \gamma)$ such that for all $(x_1, x_2)$
satisfying \eqref{eq:x1-x2-cond-1}, \eqref{eq:x1-x2-cond-2} and \eqref{eq:x1-x2-cond-3}
\begin{equation}
\label{eq:prop-mapping-1}
\begin{aligned}
\|\Lambda_1(x_1, x_2, 0) - x_1'' + F(x_2 - x_1)\|_{{\mathbf W}_\beta \cap N_{\gamma + 1}} + 
\|\Lambda_2(x_1, x_2, 0) + x_2'' + F(x_2 - x_1)\|_{{\mathbf W}_\beta \cap N_{\gamma + 1}} \\
+ \|(\Psi(x_1, x_2, 0), \partial_t \Psi(x_1, x_2, 0))\|_{N_\gamma(\cE)} \leq C_1, 
\end{aligned}
\end{equation}
{where $F$ is the normalized attraction force defined in~\eqref{eq:force-def}. 
For $\gamma =1$ the same conclusion holds without the inclusion of the ${\mathbf W}_{\beta}$ bound.}
\item For any $\gamma_1, \gamma_2 \geq 1$ there exist $C = C(\gamma_1, \gamma_2) > 0$
and $T_0 = T_0(\gamma_1, \gamma_2)$ such that for all $(x_1, x_2)$
satisfying \eqref{eq:x1-x2-cond-1}, \eqref{eq:x1-x2-cond-2} and \eqref{eq:x1-x2-cond-3} {and all $g,  \sh g \in N_{\gamma_1}(L^2)  \cap N_{\gamma_2}(L^2)$}
\begin{equation}
\label{eq:prop-mapping-2}
\begin{aligned}
&\|\Lambda_1(x_1, x_2, \sh g) - \Lambda_1(x_1, x_2, g)\|_{N_{\gamma_1 + \gamma_2}}
+\|\Lambda_2(x_1, x_2, \sh g) - \Lambda_2(x_1, x_2, g)\|_{N_{\gamma_1 + \gamma_2}} \\
&\qquad+ \|(\Psi(x_1, x_2, \sh g) - \Psi(x_1, x_2, g), \partial_t(\Psi(x_1, x_2, \sh g) - \Psi(x_1, x_2, g)))\|_{N_{\gamma_1 + \gamma_2 - 1}(\cE)} \\
&\qquad \leq C\big(\|\sh g\|_{N_{\gamma_1}(L^2)} + \|g\|_{N_{\gamma_1}(L^2)}\big)\|\sh g - g\|_{N_{\gamma_2}(L^2)}.
\end{aligned}
\end{equation}
\item For any $\gamma \in (1, 2)$, $\nu > 1$ and $\beta \in (2, \min(\nu + 2, \nu + 2\gamma - 1))$
there exist $C = C(\gamma, \nu, \beta) > 0$ and $T_0 = T_0(\gamma, \nu, \beta)$ such that for all $(x_1, x_2)$ and $(\sh x_1, \sh x_2)$
satisfying \eqref{eq:x1-x2-cond-1}, \eqref{eq:x1-x2-cond-2} and \eqref{eq:x1-x2-cond-3}, and all $g$ 
such that $\|(g, \partial_t g)\|_{N_\gamma(\cE)} \leq 1$
\begin{equation}
\label{eq:prop-mapping-3}
\begin{aligned}
&\|(\Lambda_1(\sh x_1, \sh x_2, g)- \Lambda_1(x_1, x_2, g)) - ((\sh x_1)'' - x_1'') + (F(\sh x_2 - \sh x_1) - F(x_2 - x_1))\|_{N_{\beta}\cap {\mathbf W}_\beta} \\
+& \|(\Lambda_2(\sh x_1, \sh x_2, g)- \Lambda_2(x_1, x_2, g)) + ((\sh x_2)'' - x_2'') + (F(\sh x_2 - \sh x_1) - F(x_2 - x_1))\|_{N_{\beta}\cap {\mathbf W}_\beta} \\
+& \|(\Psi(\sh x_1, \sh x_2, g) - \Psi(x_1, x_2, g), \partial_t (\Psi(\sh x_1, \sh x_2, g) - \Psi(x_1, x_2, g)))\|_{N_{\beta - 1}(\cE)} \\
&\qquad \leq C\|(\sh x_1, \sh x_2) - (x_1, x_2)\|_{S_\nu}.
\end{aligned}
\end{equation}
\end{enumerate}
\end{proposition}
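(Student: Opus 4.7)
The plan is to cast~\eqref{eq:Phi_g} into the framework of~\eqref{eq:phi4-lind} with a modified pair of Lagrange multipliers $\tilde\lambda_1 := \lambda_1 - x_1''$, $\tilde\lambda_2 := \lambda_2 + x_2''$, and a forcing term
\begin{equation}
f(x_1, x_2, g) := (x_1')^2 \partial_x^2 H_1 - (x_2')^2 \partial_x^2 H_2 + \Phi(x_1, x_2, \cdot) + \NN(x_1, x_2, g),
\end{equation}
where $\Phi$ is as in~\eqref{eq:Phi-def} and
\begin{equation}
\NN(x_1, x_2, g) := -\big[U'(1-H_1+H_2+g) - U'(1-H_1+H_2) - U''(1-H_1+H_2)g\big].
\end{equation}
The two algebraic facts that drive everything are $\|\partial_x H\|_{L^2}^{-2}\la \partial_x H_j, \Phi(x_1, x_2, \cdot)\ra = F(x_2-x_1)$, which produces the attraction term appearing in~\eqref{eq:prop-mapping-1} and~\eqref{eq:prop-mapping-3}, and $\la \partial_x H, \partial_x^2 H\ra = 0$, which ensures that the $(x_j')^2\partial_x^2 H_j$ contributions project against $\partial_x H_j$ only through exponentially small cross terms.

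For part (i), with $g = 0$, Lemma~\ref{lem:inter-bound} combined with Lemma~\ref{lem:exp-cross-term} gives $\|\Phi\|_{L^2} \lesssim \sqrt{x_2-x_1}\,\eee^{-(x_2-x_1)} \lesssim t^{-2}\sqrt{\log t}$, and $\|(x_j')^2\partial_x^2 H_j\|_{L^2} \lesssim t^{-2}$. Thus $\|f_0\|_{N_{\gamma+1}(L^2)}$ is infinite for $\gamma > 1$ and Lemma~\ref{lem:phi4-step1-lin} alone is inapplicable. However, using $|x_j'| \lesssim t^{-1}$ and $|x_j''| \lesssim t^{-2}$, together with the relation $\partial_t \Phi = -x_1'\partial_{x_1}\Phi - x_2'\partial_{x_2}\Phi$, one verifies $\|\partial_t f_0\|_{L^2} \lesssim t^{-3}$. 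Hence $f_0 \in N_\gamma(L^2)$ and $\partial_t f_0 \in N_{\gamma+1}(L^2)$ for $\gamma \in (1,2)$, and Lemma~\ref{lem:phi4-lin-dt} delivers~\eqref{eq:prop-mapping-1}. The case $\gamma = 1$ follows directly from Lemma~\ref{lem:phi4-step1-lin}.

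For part (ii), since $f(x_1, x_2, \sh g) - f(x_1, x_2, g) = \NN(x_1, x_2, \sh g) - \NN(x_1, x_2, g)$, the Taylor estimate~\eqref{eq:U'-taylor-diff} with $\sh w = w = 1 - H_1 + H_2$ gives the pointwise bound $|\NN(\sh g) - \NN(g)| \lesssim (|\sh g| + |g|)|\sh g - g|$. Thus $\|\NN(\sh g) - \NN(g)\|_{L^2} \lesssim (\|\sh g\|_{L^\infty} + \|g\|_{L^\infty})\|\sh g - g\|_{L^2}$, and in one spatial dimension the embedding $H^1 \hookrightarrow L^\infty$ converts the underlying energy-space control on $g, \sh g$ into the required $L^\infty$ bound. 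Since the difference forcing has no part proportional to $\partial_x H_1$ or $\partial_x H_2$ at leading order in the Lagrange multipliers, applying Lemma~\ref{lem:phi4-step1-lin} to the difference equation yields~\eqref{eq:prop-mapping-2}.

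For part (iii), Lemma~\ref{lem:path-dep-dt} is applied to the difference $\sh f - f$, decomposed as (a) the piece $\Phi(\sh x_1, \sh x_2, \cdot) - \Phi(x_1, x_2, \cdot)$, whose projection onto $\partial_x H_j$ reproduces $F(\sh x_2 - \sh x_1) - F(x_2 - x_1)$ via the definition of $F$ and Lemma~\ref{lem:Fz}; (b) the quadratic velocity pieces $(\sh x_j')^2 \partial_x^2 \sh H_j - (x_j')^2 \partial_x^2 H_j$, which are Lipschitz in $\sh y - y$ with an extra $t^{-2}$ decay beyond the $S_\nu$ norm of $\sh y - y$; and (c) the nonlinear term $\NN(\sh x_1, \sh x_2, g) - \NN(x_1, x_2, g)$, Lipschitz in $(x_1, x_2)$ with quadratic smallness from $g$ via~\eqref{eq:U'-taylor-diff}. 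The main obstacle is the bookkeeping of exponents: each component of $\sh f - f$ must be placed in the norms $N_{\gamma}(L^2), N_{\gamma+1}(L^2)$ required by Lemma~\ref{lem:path-dep-dt}, while the projections onto $\partial_x H_j$ must land in $N_\beta \cap {\mathbf W}_\beta$. This forces the constraints $\gamma \in (1, 2)$ and $\beta < \min(\nu + 2, \nu + 2\gamma - 1)$, the upper bound on $\beta$ reflecting the balance between the trajectory perturbation (strength $t^{-\nu}$) and the nonlinear remainder (extra gain $t^{-(2\gamma-1)}$ from bilinear-in-$g$ estimates).
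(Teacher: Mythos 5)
Your proposal follows essentially the paper's own route: you recast \eqref{eq:Phi_g} as the linear system \eqref{eq:phi4-lind} with the shifted multipliers $\lambda_1-x_1''$, $\lambda_2+x_2''$, identify the force through $\|\partial_x H\|_{L^2}^{-2}\la \partial_x H_j,\Phi\ra = F(x_2-x_1)$ and $\la\partial_x H,\partial_x^2 H\ra=0$, prove (i) via Lemma~\ref{lem:phi4-lin-dt} (the interaction forcing is only $O(t^{-2})$ but its time derivative gains a power of $t$), prove (ii) via \eqref{eq:U'-taylor-diff} and Lemma~\ref{lem:phi4-step1-lin}, and prove (iii) via the trajectory-comparison lemmas. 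Two remarks. First, in (iii) you apply Lemma~\ref{lem:path-dep-dt} once to the full forcing, nonlinear term $\NN$ included, whereas the paper splits the forcing and treats the $g$-independent part with Lemma~\ref{lem:path-dep-dt} and the quadratic-in-$g$ part with Lemma~\ref{lem:path-dep} (no time derivative needed, one extra power of decay available). Your variant is viable: $\partial_t \NN$ is controlled by $\|g\|_{L^\infty}\|\partial_t g\|_{L^2}+t^{-1}\|g\|_{L^\infty}\|g\|_{L^2}\lesssim t^{-2\gamma}$, which forces the lemma exponent to be at most $2\gamma-1$ and reproduces exactly the restriction $\beta<\nu+2\gamma-1$; you should, however, also note explicitly that the extra projections $\la\partial_x H_j,\NN\ra$, $\la\partial_x \sh H_j,\sh\NN\ra$ appearing in the conclusion of Lemma~\ref{lem:path-dep-dt} differ only by terms of size $t^{-\nu-2\gamma}\|\sh x - x\|_{S_\nu}$, hence land in $N_\beta\cap{\mathbf W}_\beta$, so that only the $F$ differences survive in \eqref{eq:prop-mapping-3}. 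Second, a small slip in (i): the case $\gamma=1$ does not follow ``directly from Lemma~\ref{lem:phi4-step1-lin}'', since that lemma at $\gamma=1$ requires $f\in N_2(L^2)$, while the available bound $\|\Phi(x_1,x_2,\cdot)\|_{L^2}\lesssim\sqrt{x_2-x_1}\,\eee^{-(x_2-x_1)}\simeq\sqrt{\log t}\,t^{-2}$ (which you yourself use) does not give membership in $N_2(L^2)$ — exactly the obstruction you point out for $\gamma>1$. The $\gamma=1$ case instead follows from Lemma~\ref{lem:phi4-lin-dt} with $\gamma=1$ (the lemma's own caveat drops the ${\mathbf W}_\beta$ norm), or simply from the $\gamma\in(1,2)$ case by monotonicity of the $N$-norms for $T_0\geq 1$; the fix is immediate, so this does not affect the overall soundness of the argument.
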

\begin{proof}
If $g = 0$, then the last line in \eqref{eq:Phi_g} vanishes.  Note also that the second line of the right hand side of \eqref{eq:Phi_g} equals $\Phi(x_1, x_2, \cdot)$, {which is defined in~\eqref{eq:Phi-def}. Note that in all estimates in Section~\ref{s:lin} we are free to replace $\lambda_j \p_x H_j$ on the right-hand side of~\eqref{eq:phi4-lind} with $(\lambda_j  + (-1)^j x_j'') \p_x H_j$.} {Thus, in the context of Lemma~\ref{lem:phi4-lin-dt}, we can take forcing term $f$ in \eqref{eq:Phi_g} with $g =0$ to be 
\EQ{
f &= (x_1')^2\partial_x^2 H_1- (x_2')^2\partial_x^2 H_2  +\Phi(x_1, x_2, \cdot). 
}
Hence, after noting that $\ang{\p_x^2 H_j, \, \p_x H_j} = 0$, we see that up to negligible terms we have 
\EQ{
\lambda_j + (-1)^j x_j'' +  \| \p_x H \|_{L^2}^{-2} \ang{ \p_x H_j, \, f} =   \lambda_j + (-1)^j x_j'' + F(x_2-x_1)
} 
with $F$ defined in~\eqref{eq:force-def}.}
In order to apply Lemma~\ref{lem:phi4-lin-dt},
we need to bound the terms in $f$ in  $N_\gamma$ and their time derivatives in $N_{\gamma + 1}$ for all $\gamma < 2$.
It is clear that $\big\|\partial_t\big((x_j')^2\partial_x^2 H_j\big)\big\|_{L^2}  \lesssim t^{-3}$.
The second line of the right hand side of \eqref{eq:Phi_g} equals $\Phi(x_1, x_2, \cdot)$.
The Chain Rule and Lemma~\ref{lem:inter-bound} yield
\begin{equation}
\|\partial_t \Phi(x_1(t), x_2(t), \cdot)\|_{L^2} \leq \sum_{j=1}^2|x_j'(t)|\|\partial_{x_j}\Phi(x_1(t), x_2(t), \cdot)\|_{L^2}
\lesssim t^{-\gamma - 1},\quad\forall \gamma < 2,
\end{equation}
thus we have proved \eqref{eq:prop-mapping-1}.

In order to prove \eqref{eq:prop-mapping-2},
we observe that $\lambda_j := \Lambda_j(x_1, x_2, \sh g) - \Lambda_j(x_1, x_2, g)$
and $h := \Psi(x_1, x_2, \sh g) - \Psi(x_1, x_2, g)$ solve the equation
\begin{equation}
\begin{aligned}
&\partial_t^2 h - \partial_x^2 h + U''(1 - H_1 + H_2)h = \lambda_1 \partial_x H_1 + \lambda_2 \partial_x H_2 \\
&- U'(1-H_1 + H_2 + \sh g) + U'(1-H_1 + H_2) + U''(1 - H_1 + H_2)\sh g \\
&+ U'(1 - H_1 + H_2 + g) - U'(1 - H_1 + H_2) - U''(1 - H_1 + H_2)g
\end{aligned}
\end{equation}
and $\la \partial_x H_j, h\ra = 0$.
The second and third line constitute the forcing term. By \eqref{eq:U'-taylor-diff},
its $L^2$ norm is bounded up to a constant by $(\|\sh g\|_{H^1} + \|g\|_{H^1})\|\sh g - g\|_{H^1}$.
Applying Lemma~\ref{lem:phi4-step1-lin}, we get \eqref{eq:prop-mapping-2}.

We are left with \eqref{eq:prop-mapping-3}. Let
\begin{align}
f &:= (x_1')^2\partial_x^2 H_1 - (x_2')^2\partial_x^2 H_2
- U'(1-H_1 + H_2) + U'(H_1) - U'(H_2), \\
\wt f &:= - U'(1-H_1 + H_2 + g) + U'(1-H_1 + H_2) + U''(1 - H_1 + H_2)g, \\
\sh f &:= ((\sh x_1)')^2\partial_x^2 \sh H_1 - ((\sh x_2)')^2\partial_x^2 \sh H_2
- U'(1-\sh H_1 + \sh H_2) + U'(\sh H_1) - U'(\sh H_2), \\
\sh{\wt f} &:= - U'(1-\sh H_1 + \sh H_2 + g) + U'(1-\sh H_1 + \sh H_2) + U''(1 - \sh H_1 + \sh H_2)g.
\end{align}
Let $(h, \lambda_1, \lambda_2)$ solve \eqref{eq:phi4-lind}--\eqref{eq:phi4-lind-orth},
$(\wt h, \wt \lambda_1, \wt \lambda_2)$ solve \eqref{eq:phi4-lind}--\eqref{eq:phi4-lind-orth} with $\wt f$ instead of $f$,
$(\sh h, \sh \lambda_1, \sh \lambda_2)$ solve \eqref{eq:phi4-lind}--\eqref{eq:phi4-lind-orth} with $\sh f$ instead of $f$
and $\sh H_j$ instead of $H_j$, and $(\sh {\wt h}, \sh {\wt\lambda_1}, \sh {\wt\lambda_2})$
solve \eqref{eq:phi4-lind}--\eqref{eq:phi4-lind-orth} with $\sh {\wt f}$ instead of $f$ and $\sh H_j$ instead of $H_j$. Then
\begin{align}
\Lambda_j(x_1, x_2, g) &= (-1)^{j+1} x_j'' + \lambda_j + \wt \lambda_j, \\
\Lambda_j(\sh x_1, \sh x_1, g) &= (-1)^{j+1} (\sh x_j)'' + \sh\lambda_j + \sh{\wt \lambda_j}, \\
\Psi(x_1, x_2, g) &= h + \wt h, \\
\Psi(\sh{x_1}, \sh{x_2}, g) &= \sh h + \sh{\wt h}.
\end{align}
In order to estimate $\sh h - h$ and $\sh \lambda_j - \lambda_j$, take $\gamma \in (\max(1, \beta - \nu), 2)$ in Lemma~\ref{lem:path-dep-dt}.
We have already seen while proving \eqref{eq:prop-mapping-1} that
\begin{equation}
\|f\|_{N_\gamma(L^2)} + \|\partial_t f\|_{N_{\gamma + 1}(L^2)} + \|\sh f\|_{N_\gamma(L^2)} + \|\partial_t \sh f\|_{N_{\gamma + 1}(L^2)} \lesssim 1,
\end{equation}
so we only need to show that $\|\sh f - f\|_{N_{\gamma + \nu}(L^2)} \lesssim \|\sh x - x\|_{S_\nu}$.
Estimating $((\sh x_j)')^2\partial_x^2 \sh H_j - (x_j')^2\partial_x^2 H_j$ in $N_{\nu + 2}$ 
is straightforward, and the remaining part is estimated using \eqref{eq:inter-bound-1}.
The projections of $f$ and $\sh f$ on $\partial_x H_j$ and $\partial_x \sh H_j$
yield the terms $F(x_2 - x_1)$ and $F(\sh x_2 - \sh x_1)$ in \eqref{eq:prop-mapping-3}.

Concerning the estimates of $\sh{\wt h} - \wt h$ and $\sh{\wt\lambda_j} - \wt \lambda_j$,
we have, with the same choice of $\gamma$,
\begin{align}
\|\wt f\|_{N_{\gamma+1}(L^2)} + \|\wt{\sh f}\|_{N_{\gamma + 1}(L^2)} &\lesssim 1, \\
\|\sh {\wt f} - \wt f\|_{N_{\gamma + \nu + 1}(L^2)} &\lesssim \|\sh x - x\|_{S_\nu},
\end{align}
where the last estimate follows from the point-wise bound
\begin{equation}
|\sh {\wt f} - \wt f| \lesssim (|\sh x_1 - x_1| + |\sh x_2 - x_2|)|g|^2,
\end{equation}
see \eqref{eq:U'-taylor-diff}.
We obtain the conclusion using Lemma~\ref{lem:path-dep} and noting again that $N_{\gamma + \nu + 1} \subset \mathbf W_\beta$.
\end{proof}
\begin{remark}
Since the distance between the kink and the antikink is $2\log t + O(1)$,
the forcing term coming from the interaction of the kink and the anti-kink
is of size $t^{-2}$, hence too large to be handled directly by Lemma~\ref{lem:phi4-step1-lin}; {see~\eqref{eq:inter-bound-1} and Lemma~\ref{lem:exp-cross-term} for this computation}.
The same remark applies to the term $x_k'(t)^2 \partial_x^2 H_k$.
However, taking the time derivative of these forcing terms, we gain one power of $t$,
and we can use Lemma~\ref{lem:phi4-lin-dt}, which is what happens in the proof above.
\end{remark}
\begin{proposition}
\label{prop:lyap-schmidt}
For any $C_0 > 0$ there exist $T_0 > 0$ and $\delta > 0$ such that the following is true.
For any $x_1, x_2: [T_0, \infty) \to \bR$ satisfying \eqref{eq:x1-x2-cond-1}, \eqref{eq:x1-x2-cond-2} and \eqref{eq:x1-x2-cond-3},
the equation \eqref{eq:phi4-step1} has a unique solution $(\lambda_1, \lambda_2, g) = \big(\lambda_1(x_1, x_2), \lambda_2(x_1, x_2), g(x_1, x_2)\big)$ such that $\|(g, \partial_t g)\|_{N_1(\cE)} \leq \delta$.
For all $\gamma \in [1, 2)$ there exist $C = C(\gamma)$ and $T_0 = T_0(\gamma)$ such that this solution satisfies
\begin{align}
\sum_{j=1}^2\|\lambda_j +(-1)^j x_j'' + F(x_2 - x_1)\|_{N_{\gamma + 1}\cap {\mathbf W_{\gamma + 1}}} + \|(g, \partial_t g)\|_{N_{\gamma}(\cE)} \leq 1. \label{eq:g-l1-l2-est}
\end{align}
Moreover, for all $\nu > 1$ and $\beta \in (2, \nu + 2)$ there exist
$C = C(\nu, \beta) > 0$ and $T_0 = T_0(\nu, \beta)$ such that
\begin{equation}
\label{eq:lyap-schmidt-lip-l}
\begin{aligned}
\big\|\lambda_j(\sh x_1, \sh x_2) -\lambda_j(x_1, x_2) + (-1)^j\big((\sh x_j)'' - x_j''\big)
+\big(F(\sh x_2 - \sh x_1) - F(x_2 - x_1)\big)\big\|_{N_{\beta}\cap {\mathbf W_{\beta}}} \\
+\|(g(\sh x_1, \sh x_2) - g(x_1, x_2), \partial_t (g(\sh x_1, \sh x_2) - g(x_1, x_2)))\|_{N_{\beta - 1}(\cE)} \leq C\|(\sh x_1, \sh x_2) - (x_1, x_2)\|_{S_\nu}.
\end{aligned}
\end{equation}
{where $(x_1, x_2)$ and $(\sh x_1, \sh x_2)$ are any two pairs of admissible trajectories satisfying $x - \sh x \in S_{\nu}$.} 
\end{proposition}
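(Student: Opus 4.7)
The plan is to construct $g(x_1, x_2)$ as the unique small fixed point of the map $g \mapsto \Psi(x_1, x_2, g)$ built in Proposition~\ref{prop:prop-mapping}, and then set $\lambda_j(x_1, x_2) := \Lambda_j(x_1, x_2, g(x_1, x_2))$. By the very definition of $(\Lambda_1, \Lambda_2, \Psi)$, the resulting triple will solve \eqref{eq:phi4-step1} together with the orthogonality conditions, so the task reduces entirely to setting up a Banach fixed-point scheme in $N_1(\cE)$ and extracting the required quantitative consequences.

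First I would run the contraction in the closed ball $\{(g,\p_t g) : \|(g,\p_t g)\|_{N_1(\cE)} \leq \delta\}$. Proposition~\ref{prop:prop-mapping}(i) with some auxiliary $\gamma_* \in (1,2)$ gives $\|(\Psi(x_1, x_2, 0), \p_t \Psi(x_1, x_2, 0))\|_{N_{\gamma_*}(\cE)} \leq C_1$, and the remark on time weights then reduces this to $C_1 T_0^{1-\gamma_*} \leq \delta/2$ in the $N_1(\cE)$-norm provided $T_0$ is large. Proposition~\ref{prop:prop-mapping}(ii) with $\gamma_1=\gamma_2=1$ supplies a Lipschitz constant of size $\lesssim \delta$ for $\Psi(x_1,x_2,\cdot)$ on this ball, so for $\delta$ small the map is a strict contraction preserving the ball and Banach's theorem yields the desired unique $g$. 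The improved bound $\|(g, \p_t g)\|_{N_\gamma(\cE)} \leq 1$ for $\gamma \in [1,2)$ follows by reapplying Proposition~\ref{prop:prop-mapping}(ii) with $\gamma_1 = 1$, $\gamma_2 = \gamma$ to the fixed-point identity and combining with the $N_\gamma(\cE)$-bound from (i), the absolute constant being absorbed by enlarging $T_0 = T_0(\gamma)$; the companion bound for $\lambda_j$ comes from substituting $g$ into $\Lambda_j$ and comparing with the $\Lambda_j(x_1,x_2,0)$ estimate provided by (i).

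For the Lipschitz dependence \eqref{eq:lyap-schmidt-lip-l}, set $\sh g := g(\sh x_1, \sh x_2)$, $\sh \lambda_j := \lambda_j(\sh x_1, \sh x_2)$, and use the two-term decomposition
\begin{equation*}
\sh g - g = \big[\Psi(\sh x_1, \sh x_2, \sh g) - \Psi(\sh x_1, \sh x_2, g)\big] + \big[\Psi(\sh x_1, \sh x_2, g) - \Psi(x_1, x_2, g)\big]
\end{equation*}
to separate the change of amplitude from the change of trajectory. The first bracket is controlled in $N_{\beta-1}(\cE)$ by $C\delta \|\sh g - g\|_{N_{\beta-1}}$ via Proposition~\ref{prop:prop-mapping}(ii) with $\gamma_1=1$, $\gamma_2=\beta-1$, while the second is controlled by $C\|\sh x - x\|_{S_\nu}$ via Proposition~\ref{prop:prop-mapping}(iii), applied with some $\gamma \in (1,2)$ chosen so that $\beta < \nu + 2\gamma - 1$ (permitted since $\beta < \nu + 2$). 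Absorbing the $C\delta$ factor on the left yields the bound on $\|\sh g - g\|_{N_{\beta-1}(\cE)}$. The analogous splitting for $\sh \lambda_j - \lambda_j$, with the renormalising terms $(-1)^j((\sh x_j)'' - x_j'')$ and $F(\sh x_2 - \sh x_1) - F(x_2 - x_1)$ emerging explicitly from (iii), then gives the Lagrange-multiplier half of the Lipschitz estimate.

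The hard part will be recovering the full $N_\beta \cap {\mathbf W}_\beta$ norm on the $\lambda_j$-differences. The ${\mathbf W}_\beta$ control on the trajectory-change bracket is delivered directly by (iii), but Proposition~\ref{prop:prop-mapping}(ii) only states an $N_{\gamma_1+\gamma_2}$-bound on the amplitude-change bracket. The cleanest resolution is to observe that the proof of (ii) proceeds through Lemma~\ref{lem:phi4-step1-lin}, which in fact produces the stronger $\mathbf W_\beta \cap N_{\gamma+1}$ bound on the Lagrange multipliers for the linearised problem, so the same ${\mathbf W}_\beta$ control is actually available for the amplitude difference with $\gamma_1 + \gamma_2 = \beta$. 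Throughout, the flexibility to choose $T_0 = T_0(\gamma, \nu, \beta)$ as large as required is what makes both the small prefactor in the contraction and the absolute constants in the $N_\gamma$-bounds disappear into the claimed estimates.
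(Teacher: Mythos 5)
Your overall skeleton coincides with the paper's: construct $g(x_1,x_2)$ as the unique fixed point of $\Psi(x_1,x_2,\cdot)$ from Proposition~\ref{prop:prop-mapping} in a small ball of $N_1(\cE)$, set $\lambda_j=\Lambda_j(x_1,x_2,g)$, and prove \eqref{eq:lyap-schmidt-lip-l} by splitting $\sh g-g$ into an amplitude-change bracket (handled by \eqref{eq:prop-mapping-2} and absorbed) and a trajectory-change bracket (handled by \eqref{eq:prop-mapping-3}). However, your resolution of what you call the hard part is not correct as stated. You propose to recover the ${\mathbf W}_\beta$ control on the amplitude-change part of the multiplier difference by reopening the proof of \eqref{eq:prop-mapping-2} with $\gamma_1+\gamma_2=\beta$ and quoting the ${\mathbf W}$-bound of Lemma~\ref{lem:phi4-step1-lin}. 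But that lemma, applied with forcing in $N_{\gamma_1+\gamma_2}(L^2)=N_\beta(L^2)$ (i.e.\ its ``$\gamma+1$'' equal to $\beta$), yields ${\mathbf W}_{\beta'}$ bounds only for $\beta'\in(2,\beta)$, strictly below the endpoint $\beta$; moreover the renormalising term $\|\p_x H\|_{L^2}^{-2}\la\p_x H_j,f\ra$, which you must add back to pass from the lemma's quantity to $\Lambda_j(\sh g)-\Lambda_j(g)$, lies only in $N_\beta$, and $N_\beta$ does not embed into $W_{\alpha,\beta}$ (one needs $N_{\beta+1}$, cf.\ Lemma~\ref{lem:Wab-prop}). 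So with your choice of exponents the ${\mathbf W}_\beta$ endpoint is genuinely missed. The paper's fix is simpler and avoids reopening \eqref{eq:prop-mapping-2} altogether: choose $\gamma_1\in(\max(1,\beta-\nu),2)$ and $\gamma_2:=\beta+1-\gamma_1<\nu+1$, use the already-established bound $\|(\sh g-g,\p_t(\sh g-g))\|_{N_{\gamma_2}(\cE)}\lesssim\|\sh x-x\|_{S_\nu}$, so that \eqref{eq:prop-mapping-2} gives an $N_{\beta+1}$ bound on the multiplier difference, and then invoke the continuous embedding $N_{\beta+1}\subset N_\beta\cap{\mathbf W}_\beta$ from Lemma~\ref{lem:Wab-prop}. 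You need the extra unit in the sum of exponents; it is available precisely because $\beta<\nu+2$.

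A second, smaller point: your upgrade from the $N_1$ fixed point to the bound $\|(g,\p_t g)\|_{N_\gamma(\cE)}\leq 1$, $\gamma\in(1,2)$, by ``reapplying (ii) with $\gamma_1=1,\gamma_2=\gamma$ to the fixed-point identity'' and absorbing, is circular as written: the absorption requires knowing a priori that $\|g\|_{N_\gamma}<\infty$, which the $N_1$ construction does not provide. The paper instead runs a second contraction in the unit ball of $N_\gamma(\cE)$ (invariance again via \eqref{eq:prop-mapping-1} with an intermediate exponent and $T_0$ large) and identifies the two fixed points through uniqueness in the small $N_1$ ball, using $N_\gamma\subset N_1$ with small embedding constant; alternatively one can propagate uniform $N_\gamma$ bounds along the Picard iterates and pass to the limit. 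Either repair is routine, but some such argument is needed.
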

\begin{proof}
We notice that for given $(x_1, x_2)$, $g$ solves \eqref{eq:phi4-step1} if and only if $g$ is a fixed point
of the mapping $\Psi(x_1, x_2, \cdot)$. By \eqref{eq:prop-mapping-2}, this mapping is a contraction
in a sufficiently small ball of $N_1(\cE)$. Also, \eqref{eq:prop-mapping-1} implies that this ball is invariant,
hence by the Contraction Principle there exists a unique fixed point $g = g(x_1, x_2)$.
By a similar argument, there exists a unique fixed point in the unit ball of $N_\gamma(\cE)$ for any $\gamma \in (1, 2)$.
Since $N_\gamma(\cE) \subset N_1(\cE)$, we deduce that the unique fixed point in the small ball of $N_1(\cE)$
in fact belongs to $N_\gamma(\cE)$ for all $\gamma \in (1, 2)$.

Let $(x_1, x_2)$ and $(\sh x_1, \sh x_2)$ be two pairs of trajectories such that $\sh x_j - x_j \in S_\nu$ for some $\nu > 1$,
$g := g(x_1, x_2)$ and $\sh g := g(\sh x_1, \sh x_2)$. Let $\beta \in (2, \nu + 2)$. We have
\begin{equation}
\begin{aligned}
&\|(\sh g - g, \partial_t(\sh g - g))\|_{N_{\beta - 1}(\cE)}\\
 &\quad= \|(\Psi(\sh x_1, \sh x_2, \sh g) - \Psi(x_1, x_2, g), \partial_t(\Psi(\sh x_1, \sh x_2, \sh g) - \Psi(x_1, x_2, g)))\|_{N_{\beta - 1}(\cE)} \\
&\quad \leq  \|(\Psi(\sh x_1, \sh x_2, \sh g) - \Psi(\sh x_1, \sh x_2, g), \partial_t (\Psi(\sh x_1, \sh x_2, \sh g) - \Psi(\sh x_1, \sh x_2, g)))\|_{N_{\beta - 1}(\cE)} \\
&\quad +\|(\Psi(\sh x_1, \sh x_2, g) - \Psi(x_1, x_2, g), \partial_t (\Psi(\sh x_1, \sh x_2, g) - \Psi(x_1, x_2, g)))\|_{N_{\beta - 1}(\cE)}.
\end{aligned}
\end{equation}
Bound \eqref{eq:prop-mapping-2} implies that the first term is $\ll \|(\sh g - g, \partial_t (\sh g - g))\|_{N_{\beta - 1}(\cE)}$.
Taking $\gamma = \frac 32$ in Proposition~\ref{prop:prop-mapping} (iii), which is allowed by \eqref{eq:g-l1-l2-est},
we obtain that the second term is $\lesssim \|\sh x - x\|_{S_\nu}$. This proves the second bound in \eqref{eq:lyap-schmidt-lip-l}.

Take $\gamma_1 \in (\max(1, \beta - \nu), 2)$ and $\gamma_2 := \beta + 1 - \gamma_1 < \nu + 1$.
As we have just proved, $\|(\sh g - g, \partial_t (\sh g - g))\|_{N_{\gamma_2}(\cE)} \lesssim \|\sh x - x\|_{S_\nu}$,
thus \eqref{eq:prop-mapping-2} yields
\begin{equation}
\|\Lambda_j(\sh x_1, \sh x_2, \sh g) - \Lambda_j(\sh x_1, \sh x_2, g)\|_{N_\beta \cap{\mathbf  W}_\beta}
\lesssim \|\Lambda_j(\sh x_1, \sh x_2, \sh g) - \Lambda_j(\sh x_1, \sh x_2, g)\|_{N_{\beta+1}} \lesssim \|\sh x - x\|_{S_\nu}.
\end{equation}
Now the bound on the first line in \eqref{eq:lyap-schmidt-lip-l} follows from \eqref{eq:prop-mapping-3}.
\end{proof}

\subsection{{Completion of the proof of Theorem~\ref{thm:main}} }

{First, we establish two preparatory lemmas.}

\begin{lemma}
\label{lem:traj-asym}
Let $1 < \gamma < 2$. Suppose that ${v} \in N_{\gamma + 1} \cap {\mathbf W}_{\gamma+1}$
and let $z(t)$ be a solution of
\begin{equation}
z''(t) = -2F(z(t)) + {v(t)}
\end{equation}
such that $|z(t) - 2\log t| \lesssim 1$ and $0 \leq z'(t) \lesssim t^{-1}$.
Then there exists $t_0 \in \bR$ such that
\begin{equation}
\label{eq:traj-asym}
\begin{aligned}
\big|z(t) - 2\log(A(t-t_0))\big| \lesssim t^{-\gamma}, \ \Big|z'(t) - \frac{2}{t-t_0}\Big| \lesssim t^{-\gamma - 1},
\ \Big|z''(t) + \frac{2}{(t-t_0)^2}\Big| \lesssim t^{-\gamma - 1}.
\end{aligned}
\end{equation}
\end{lemma}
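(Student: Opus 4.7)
The idea is to exploit an energy functional for the ODE, improve its decay by integration by parts using the $\mathbf{W}_{\gamma+1}$ information on $v$, and then read off the correct shift $t_0$ via a change of variable that linearises the leading behaviour.

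\textbf{Step 1 (Energy and its time derivative).} Set $G(z) := -\int_z^\infty 2F(s)\,\ud s$, so that $G'(z) = 2F(z)$. From Lemma~\ref{lem:Fz} (and the normalizations $\phi_+=1$, $U''(1)=1$, which yield $A^2 = 2\|\partial_x H\|_{L^2}^{-2}\kappa^2$) one gets
\begin{equation}
G(z) = -2A^2 \eee^{-z} + O(z\eee^{-2z}), \qquad z \to \infty.
\end{equation}
Define $E(t):=\tfrac12 (z'(t))^2 + G(z(t))$. Multiplying the ODE by $z'$ gives $E'(t) = z'(t)v(t)$. Since $z'(t)\to 0$ and $G(z(t))\to 0$, we have $E(\infty)=0$, so $E(t) = -\int_t^\infty z'(s)v(s)\,\ud s$.

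\textbf{Step 2 (Sharp decay $E(t)=O(t^{-\gamma-2})$).} The naive pointwise estimate $|z'v|\lesssim t^{-\gamma-2}$ yields only $|E(t)|\lesssim t^{-\gamma-1}$, which is insufficient. The crucial improvement uses the $W_{0,\gamma+1}$ control: setting $V(t):=\int_t^\infty v(s)\,\ud s$, the hypothesis $v\in \mathbf{W}_{\gamma+1}$ gives $|V(t)|\lesssim t^{-\gamma-1}$. Integrating by parts,
\begin{equation}
E(t) = -z'(t)V(t) - \int_t^\infty z''(s)V(s)\,\ud s.
\end{equation}
Since $|z''|\lesssim \eee^{-z}+|v|\lesssim s^{-2}$ and $|z'(t)|\lesssim t^{-1}$, both terms are $O(t^{-\gamma-2})$, whence $|E(t)|\lesssim t^{-\gamma-2}$. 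This is the main technical step and the one I expect to be the key obstacle; without it the characteristic-exponent analysis at the end would not close.

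\textbf{Step 3 (Recovery of $z'$).} From $(z')^2 = 2E + 4A^2\eee^{-z} + O(z\eee^{-2z})$, Step 2 together with $\eee^{-z}\asymp t^{-2}$ gives the relative perturbation $O(E\eee^{z} + z\eee^{-z}) = O(t^{-\gamma})$, so, after taking the positive square root,
\begin{equation}
z'(t) = 2A\eee^{-z(t)/2} + O(t^{-\gamma-1}).
\end{equation}

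\textbf{Step 4 (Identifying $t_0$).} Introduce $T(t):=A^{-1}\eee^{z(t)/2}\asymp t$. Then $T'(t) = \tfrac12 z'(t)T(t) = 1 + O(Tt^{-\gamma-1}) = 1 + O(t^{-\gamma})$. Since $\gamma>1$, the function $T'-1$ is integrable at infinity, so $\lim_{t\to\infty}(T(t)-t)$ exists; define
\begin{equation}
t_0 := -\lim_{t\to\infty}(T(t)-t), \qquad\text{so}\qquad T(t) - (t-t_0) = -\int_t^\infty (T'(s)-1)\,\ud s = O(t^{-\gamma+1}).
\end{equation}

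\textbf{Step 5 (Conclusion).} From $T(t) = (t-t_0)\bigl(1 + O(t^{-\gamma})\bigr)$ and $z(t) = 2\log(AT(t))$,
\begin{equation}
z(t) - 2\log(A(t-t_0)) = 2\log\bigl(1 + O(t^{-\gamma})\bigr) = O(t^{-\gamma}).
\end{equation}
Similarly $z'(t) = 2/T(t) + O(t^{-\gamma-1}) = 2/(t-t_0) + O(t^{-\gamma-1})$, and plugging into the ODE,
\begin{equation}
z''(t) = -2A^2\eee^{-z(t)} + O(\log t/t^4) + O(t^{-\gamma-1}) = -\frac{2}{(t-t_0)^2} + O(t^{-\gamma-1}),
\end{equation}
which are exactly the three estimates in \eqref{eq:traj-asym}. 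The only delicate ingredient is the $O(t^{-\gamma-2})$ control on the energy; once that is in hand, the remainder is an elementary integration of $T'-1$.
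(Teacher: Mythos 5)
Your argument is correct, but it takes a genuinely different route from the paper. The paper's proof starts with the same multiplication by $z'$ to get $z=2\log(At)+O(t^{1-\gamma})$, but then linearizes: it sets $u=z-2\log(At)$, passes to logarithmic time $s=\log t$, and treats the resulting perturbed Euler system \eqref{eq:traj-asym-syst1}--\eqref{eq:traj-asym-syst2} by diagonalizing along the characteristic exponents $2$ and $-1$, bootstrapping the decay of $u$ from $t^{-(\gamma-1)}$ up to $t^{-1+\epsilon}$, and finally extracting the coefficient $b$ of the $e^{-s}$ (time-translation) mode, which yields $t_0=b/6$; the components $W_{2,\gamma+1}$ and $W_{-1,\gamma+1}$ of $\mathbf{W}_{\gamma+1}$ are exactly what makes the forcing integrable against those exponents. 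You instead exploit the Hamiltonian structure of the ODE: the single integration by parts in your Step 2, which uses only the $W_{0,\gamma+1}$ component to get $|V(t)|\lesssim t^{-\gamma-1}$ and hence $E(t)=O(t^{-\gamma-2})$, upgrades the first-order reduction to $z'=2Ae^{-z(t)/2}+O(t^{-\gamma-1})$ in one shot (here, as in the paper, the sign hypothesis $0\le z'$ is what selects the positive square root); after that, $T:=A^{-1}e^{z/2}$ integrates the equation exactly and $t_0$ appears as $\lim_{t\to\infty}(t-T(t))$, with all remainders closing because $\gamma<2$ makes $ze^{-2z}\asymp \log t\, t^{-4}\ll t^{-\gamma-2}$. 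Your route is shorter, avoids the bootstrap, and needs less of the $\mathbf{W}_{\gamma+1}$ information; the paper's characteristic-exponent analysis is less tied to the exact first integral of this particular ODE and matches how the spaces $W_{\alpha,\gamma+1}$ and Lemma~\ref{lem:Wab-prop} are used elsewhere in the section, but for this lemma both arguments deliver the full conclusion \eqref{eq:traj-asym}.
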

\begin{proof} Since $z(t)\eee^{-2z(t)} \in N_{\gamma + 2} \subset {\mathbf W}_{\gamma + 1}$ for all $\gamma < 2$,
Lemma~\ref{lem:Fz} allows to replace $F(z)$ by $A^2\eee^{-z}$.

\textbf{Step 1.} We prove that
\begin{equation}
\label{eq:traj-asym-step1}
z(t) = 2\log(A t) + O(t^{1-\gamma}).
\end{equation}
Multiplying the equation by $z'(t)$ and integrating we obtain
\begin{equation}
\frac 12 (z'(t))^2 = 2A^2\eee^{-z(t)} - \int_t^\infty z'({\tau} ){v(\tau)}\ud {\tau}  = 2A^2\eee^{-z(t)} + O(t^{-\gamma - 1}),
\end{equation}
thus
\begin{equation}
(z'(t) - 2A\eee^{-\frac 12 z(t)})(z'(t) + 2A\eee^{-\frac 12 z(t)}) \lesssim t^{-\gamma - 1} \ \Rightarrow\ z'(t) = 2A\eee^{-\frac 12 z(t)} + O(t^{-\gamma}),
\end{equation}
which in turn implies
\begin{equation}
\big(\eee^{\frac 12 z(t)}\big)' = A + O(t^{1-\gamma})\ \Rightarrow\ \eee^{\frac 12 z(t)} = t(A + O(t^{1-\gamma})).
\end{equation}
Taking the logarithm, we obtain \eqref{eq:traj-asym-step1}.

\textbf{Step 2.} We improve. We set $z(t) = 2\log(A t) + u(t)$. In this step we prove $|u(t)| \lesssim t^{-1 + \epsilon}$, {for a fixed small number $\eps$}. From the equation we obtain
\begin{equation}
u''(t) = 2t^{-2}u(t) + {v(t)} + O(t^{-2}|u(t)|^2).
\end{equation}
Set
\begin{equation}
\wt u(s) := u(\eee^s).
\end{equation}
The Chain Rule yields
\begin{equation}
\wt u''(s) = 2\wt u(s) + \wt u'(s) + \eee^{2s}{v(\eee^s)} + O(|\wt u(s)|^2).
\end{equation}
This system diagonalises as follows:
\begin{align}
(\wt u(s) + \wt u'(s))' &= 2(\wt u(s) + \wt u'(s)) + \eee^{2s}{v(\eee^s)} + O(|\wt u(s)|^2), \label{eq:traj-asym-syst1} \\
(2\wt u(s) - \wt u'(s))' &= -(2\wt u(s) - \wt u'(s)) - \eee^{2s}{v(\eee^s)} + O(|\wt u(s)|^2). \label{eq:traj-asym-syst2}
\end{align}
Suppose $|\wt u(s)| \lesssim \eee^{-\beta s}$ for some $\beta > 0$. We begin with $\beta = \gamma - 1$ and in a finite
number of steps we will bootstrap this to $\beta = 1 - \epsilon$.
The assumption ${v}  \in W_{2, \gamma + 1}$ implies in particular that
\begin{equation}
\lim_{s\to \infty} \int^{\eee^{s}} \eee^{3\sigma}{v(\eee^\sigma)}\ud \sigma = \lim_{T\to \infty}\int^T \tau^2 {v(\tau)}\ud\tau
\end{equation}
exists and is finite. Thus \eqref{eq:traj-asym-syst2} yields
\begin{equation}
|\eee^s (2\wt u(s) - \wt u'(s))| \lesssim  \eee^{(1-2\beta)s}\ \Rightarrow\ |2\wt u(s) - \wt u'(s)| \lesssim  \eee^{-2\beta s}.
\end{equation}
From the assumption ${v} \in W_{-1, \gamma + 1}$ we have
\begin{equation}
\Big|\int_{s}^\infty {v(\eee^\sigma)} \ud \sigma\Big| = \Big|\int_{\eee^s}^\infty \tau^{-1}{ v(\tau) } \ud\tau\Big| \lesssim \eee^{-(\gamma + 2)s},
\end{equation}
so integrating \eqref{eq:traj-asym-syst1} we obtain
\begin{equation}
|\wt u(s) + \wt u'(s)| \lesssim \eee^{-2\beta s}\ \Rightarrow \ |\wt u(s)| \lesssim \eee^{-2\beta s}.
\end{equation}
Thus we can double the value of $\beta$, which concludes Step 2.

\textbf{Step 3.} We improve again, using the information obtained in Step 2. We obtain from \eqref{eq:traj-asym-syst1}
and \eqref{eq:traj-asym-syst2}
\begin{align}
(\wt u(s) + \wt u'(s))' &= 2(\wt u(s) + \wt u'(s)) + \eee^{2s}{v(\eee^s)} + O(\eee^{(-2 + \epsilon)s}), \label{eq:traj-asym-syst1bis} \\
(2\wt u(s) - \wt u'(s))' &= -(2\wt u(s) - \wt u'(s)) - \eee^{2s}{v(\eee^s)} + O(\eee^{(-2 + \epsilon)s}). \label{eq:traj-asym-syst2bis}
\end{align}
Equation \eqref{eq:traj-asym-syst2bis} is equivalent to
\begin{equation}
\big(\eee^s(2\wt u(s) - \wt u'(s))\big)' = -\eee^{3s}{v(\eee^s)} + O(\eee^{(-1 + \epsilon)s}).
\end{equation}
From the assumption ${v}  \in W_{2, \gamma + 1}$ we deduce that the limit $b = \lim_{s \to \infty} \big(\eee^s(2\wt u(s) - \wt u'(s))\big)$
exists and
\begin{equation}
2\wt u(s) - \wt u'(s) = b\eee^{-s} + O\big(\eee^{-\gamma s}\big).
\end{equation}
Equation \eqref{eq:traj-asym-syst1bis} is equivalent to
\begin{equation}
\big(\eee^{-2s}(\wt u(s) + \wt u'(s))\big)' = {v(\eee^s)}+ O(\eee^{(-4 + \epsilon)s}) \ \Rightarrow \ \wt u(s) + \wt u'(s) = O(\eee^{-\gamma s}).
\end{equation}
We conclude that
\begin{equation}
\begin{aligned}
\wt u(s) &= \frac b3 \eee^{-s} + O(\eee^{-\gamma s}), \\
\wt u'(s) &= - \frac b3\eee^{-s} + O(\eee^{-\gamma s}),
\end{aligned}
\end{equation}
which after a straightforward transformation yield the bounds for $z$ and $z'$ in \eqref{eq:traj-asym} with {$t_0:= \frac{b}{6}$}.
The bound on $z''$ follows from the bound on $z$, \eqref{eq:Fz} and the differential equation.
\end{proof}
\begin{lemma}
\label{lem:lin-eq-for-traj}
Let $\gamma > 1$. For any $f_1, f_2 \in N_{\gamma + 1} \cap {\mathbf W}_{\gamma + 1}$
there exists a unique solution $(y_1, y_2) \in S_\gamma$ of the system
\begin{equation}
\begin{aligned}
y_1'' &= t^{-2}(-(y_2 - y_1)) + f_1, \\
y_2'' &= t^{-2}(y_2 - y_1) + f_2,
\end{aligned}
\end{equation}
and this defines a bounded operator
$N_{\gamma + 1} \cap {\mathbf W}_{\gamma + 1} \owns (f_1, f_2) \mapsto (y_1, y_2) \in S_\gamma$.
\end{lemma}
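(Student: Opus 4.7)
The plan is to diagonalize the system and then apply Lemma~\ref{lem:Wab-prop}(iii) componentwise. Introduce the new unknowns
\begin{equation}
u := y_1 + y_2, \qquad v := y_2 - y_1.
\end{equation}
Adding and subtracting the two equations, the system is equivalent to the pair of uncoupled scalar equations
\begin{align}
u'' &= (f_1 + f_2), \\
v'' &= 2 t^{-2} v + (f_2 - f_1).
\end{align}

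For the $v$-equation, apply Lemma~\ref{lem:Wab-prop}(iii) with $\mu = 2$. Then $\mu^\pm = \frac{1}{2}(1 \pm \sqrt{9}) \in \{2, -1\}$, the condition $\gamma > \frac{1}{2}(\sqrt{1+4\mu} - 1) = 1$ is exactly our hypothesis, and the required forcing space is $N_{\gamma+1} \cap W_{-1, \gamma+1} \cap W_{2, \gamma+1}$, which is included in $N_{\gamma+1} \cap \mathbf{W}_{\gamma+1}$ by the definition~\eqref{eq:Wdef}. This yields a unique $v \in S_\gamma$ with $\|v\|_{S_\gamma} \lesssim \|f_2 - f_1\|_{N_{\gamma+1} \cap \mathbf{W}_{\gamma+1}}$. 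For the $u$-equation, apply Lemma~\ref{lem:Wab-prop}(iii) with $\mu = 0$, so $\mu^\pm \in \{0, 1\}$, the condition $\gamma > 0$ is automatic, and the required forcing space $N_{\gamma+1} \cap W_{0, \gamma+1} \cap W_{1, \gamma+1}$ is again contained in $N_{\gamma+1} \cap \mathbf{W}_{\gamma+1}$. This gives a unique $u \in S_\gamma$ with $\|u\|_{S_\gamma} \lesssim \|f_1 + f_2\|_{N_{\gamma+1} \cap \mathbf{W}_{\gamma+1}}$.

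Finally, recover $y_1 = \frac{1}{2}(u - v)$, $y_2 = \frac{1}{2}(u + v)$, both of which then lie in $S_\gamma$ with
\begin{equation}
\|y_1\|_{S_\gamma} + \|y_2\|_{S_\gamma} \lesssim \|u\|_{S_\gamma} + \|v\|_{S_\gamma} \lesssim \|f_1\|_{N_{\gamma+1} \cap \mathbf{W}_{\gamma+1}} + \|f_2\|_{N_{\gamma+1} \cap \mathbf{W}_{\gamma+1}}.
\end{equation}
Uniqueness follows from the uniqueness of $u$ and $v$ in $S_\gamma$ guaranteed by Lemma~\ref{lem:Wab-prop}(iii). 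Since all maps involved are linear and bounded, the lemma follows. There is no real obstacle beyond checking that the indices $\{-1, 0, 1, 2\}$ defining $\mathbf{W}_{\gamma+1}$ were designed precisely to cover the two sets of characteristic exponents $\{\mu^-, \mu^+\}$ arising for $\mu \in \{0, 2\}$; this is why those four indices appear in~\eqref{eq:Wdef}.
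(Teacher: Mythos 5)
Your proof is correct and essentially identical to the paper's: the paper diagonalizes with $z_1 := y_2 + y_1$, $z_2 := y_2 - y_1$ and applies Lemma~\ref{lem:Wab-prop}(iii) with $\mu = 0$ and $\mu = 2$, exactly as you do, noting $\gamma > 1 = \frac12(\sqrt{1+4\cdot 2}-1)$. One small wording slip: the relevant inclusion runs the other way, namely $N_{\gamma+1} \cap {\mathbf W}_{\gamma+1} \subset N_{\gamma+1} \cap W_{\mu^-, \gamma+1} \cap W_{\mu^+, \gamma+1}$ (the data space sits inside the admissible forcing space), which is what your argument actually uses and clearly what you intended.
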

\begin{proof}
By setting $z_1 := y_2 + y_1$ and $z_2 := y_2 - y_1$, we transform the system to two decoupled second order equations:
\begin{equation}
\begin{aligned}
z_1'' &= g_1, \qquad &g_1 := f_2 + f_1, \\
z_2'' &= 2t^{-2}z_2 + g_2, \qquad &g_2 := f_2 - f_1.
\end{aligned}
\end{equation}
Since $\gamma > 1 = \frac 12(\sqrt{1 + 4\times 2} - 1)$, Lemma~\ref{lem:Wab-prop} (iii) applies.
\end{proof}

\begin{proof}[Proof of Theorem~\ref{thm:main}]
Observe that $(x_1, x_2, g)$ solves \eqref{eq:dt2g} if and only if
\begin{equation}
\label{eq:dt2g-equiv}
g = g(x_1, x_2), \qquad \lambda_1(x_1, x_2) = \lambda_2(x_1, x_2) = 0.
\end{equation}
\textbf{Step 1.}
Fix $\gamma \in (1, 2)$. We will prove that there exists a unique solution $(x_1, x_2, g)$ of \eqref{eq:dt2g-equiv} such that
$\|(x_1(t), x_2(t)) - ({-}\log(At), \log(At))\|_{S_\gamma} \leq 1$ and $\|(g, \partial_t g)\|_{N_\gamma(\cE)} \leq 1$.
In particular, we obtain the same solution for all the values $\gamma \in (1, 2)$.

We define $(y_1, y_2)$ by $(x_1(t), x_2(t)) = ({-}\log(At) + y_1(t), \log(At) + y_2(t))$
and we set up a fixed point problem for $(y_1, y_2) \in S_\gamma$.
Given $(y_1, y_2) \in S_\gamma$, we define $(\wt y_1, \wt y_2) = \Theta(y_1, y_2)$ as the solution of the following system of differential equations:
\begin{align}
\wt y_1'' &= -t^{-2}(\wt y_2 - \wt y_1) - \lambda_1({-}\log(At) + y_1, \log(At) + y_2) + y_1'' + t^{-2}(y_2 - y_1), \\
\wt y_2'' &= t^{-2}(\wt y_2 - \wt y_1) + \lambda_2({-}\log(At) + y_1, \log(At) + y_2) + y_2'' - t^{-2}(y_2 - y_1).
\end{align}
We see that $\lambda_j({-}\log(At) + y_1, \log(At) + y_2) = 0$ for $j \in \{1, 2\}$
is equivalent to $(y_1, y_2)$ being a fixed point of $\Theta$.
In this proof, we denote
\begin{equation}
f_j(y_1, y_2, t) := (-1)^j \lambda_j({-}\log(At) + y_1(t), \log(At) + y_2(t)) + y_j''(t) - (-1)^j t^{-2}(y_2(t) - y_1(t)).
\end{equation}

We first check that $\Theta(0, 0) \in S_\gamma$. By Lemma~\ref{lem:lin-eq-for-traj}, it suffices to verify that
\begin{equation}
\label{eq:approx-in-Sg}
\lambda_j({-}\log(At), \log(At)) \in N_{\gamma + 1}\cap {\mathbf W}_{\gamma + 1}.
\end{equation}
By Lemma~\ref{lem:Fz} {and recalling the definition of $A$ in Proposition~\ref{prop:regime}}, we have $\log(At)'' + F(2\log(At)) \in N_{\gamma + 2} \subset N_{\gamma + 1} \cap {\mathbf W}_{\gamma + 1}$,
so \eqref{eq:approx-in-Sg} follows from Proposition~\ref{prop:lyap-schmidt}.

We now prove that $\Theta$ is a contraction in $S_\gamma$. Again by Lemma~\ref{lem:lin-eq-for-traj},
it suffices to verify that for any $c > 0$
\begin{equation}
\label{eq:rhs-traj-lip}
\|f_j(\sh y_1, \sh y_2, \cdot) - f_j(y_1, y_2, \cdot)\|_{N_{\gamma + 1}\cap {\mathbf W}_{\gamma + 1}} \leq c \|(\sh y_1, \sh y_2) - (y_1, y_2)\|_{S_\gamma},
\end{equation}
provided we take $T_0$ sufficiently large.
Let $z := x_2 - x_1 = 2\log(A t) + y_2 - y_1$ and $\sh z := \sh x_2 - \sh x_1 = 2\log(A t) + \sh y_2 - \sh y_1$.
For $z \gg 1$ and $|\sh z - z| \ll 1$ we have, by \eqref{eq:dFz},
\begin{equation}
\label{eq:force-diff}
|(F(\sh z) - F(z)) - 2A^2 (\eee^{-\sh z} - \eee^{-z})| \lesssim \bigg|\int_z^{\sh z}w\eee^{-2w}\ud w\bigg| \ll t^{-3}|\sh z - z|
\end{equation}
and
\begin{equation}
\begin{aligned}
\eee^{-\sh z} - \eee^{-z} &= (A t)^{-2}\big(\eee^{-(\sh y_2 - \sh y_1)} - \eee^{-(y_2 - y_1)}\big)
= -(A t)^{-2}\int_{y_2 - y_1}^{\sh y_2 - \sh y_1}(1 + O(w))\ud w \\
&= -(A t)^{-2}((\sh y_2 - \sh y_1)- (y_2 - y_1)) + o(t^{-3}|\sh y - y|),
\end{aligned}
\end{equation}
where the last inequality follows from $|\sh y_2 - \sh y_1| + |y_2 - y_1| \lesssim t^{-\gamma} \ll t^{-1}$.
Plugging this into \eqref{eq:force-diff} we obtain
\begin{equation}
\big\|\big(F(\sh x_2 - \sh x_1) - F(x_2 - x_1)\big) + t^{-2}\big((\sh y_2 - \sh y_1) - (y_2 - y_1)\big)\big\|_{N_{\gamma + 2}}
\ll \|\sh y - y\|_{N_\gamma}.
\end{equation}
Comparing this bound with \eqref{eq:lyap-schmidt-lip-l}, we get \eqref{eq:rhs-traj-lip}.

Invoking the Contraction Principle we obtain the unique solution $(x_1, x_2, g)$.
Set $\wt x_1(t) := -x_2(t)$, $\wt x_2(t) := -x_1(t)$ and $\wt g(t, x) := g(t, -x)$.
Observe that, by the symmetry of the problem, $(\wt x_1, \wt x_2, \wt g)$ also satisfies the requirements
stated at the beginning of Step 1. hence, by uniqueness, $g(t, -x) = g(t, x)$ and $x_2(t) = -x_1(t)$ for all $t$ and $x$.
We see that $\Phi := 1 - H_1 + H_2 + g$ is a strongly interacting kink-antikink pair satisfying
\eqref{eq:x-asympt} and \eqref{eq:error-est}, with $x(t) = x_2(t)$.

\noindent
\textbf{Step 2.} Let $(x_1, x_2, g)$ be a solution of \eqref{eq:dt2g}.
Then \eqref{eq:regime-3} implies that for arbitrary $\delta > 0$ there exists $T_0 > 0$ such that
\begin{equation}
\|(g, \partial_t g)\|_{N_1(\cE)} \leq \delta.
\end{equation}
Moreover, \eqref{eq:regime-1}, \eqref{eq:x'bound} and \eqref{eq:x''bound} imply
\eqref{eq:x1-x2-cond-1}, \eqref{eq:x1-x2-cond-2} and \eqref{eq:x1-x2-cond-3}.
Thus, Proposition~\ref{prop:lyap-schmidt} yields
\begin{equation}
(g, \partial_t g) \in N_\gamma(\cE),\qquad \text{for all }\gamma \in [1, 2),
\end{equation}
and, since $\lambda_j(x_1, x_2) = 0$,
\begin{equation}
(-1)^j x_j''(t) + F(x_2(t) - x_1(t)) \in N_{\gamma + 1} \cap {\mathbf W}_{\gamma + 1},\qquad\text{for all }\gamma \in [1, 2).
\end{equation}
Let $m(t) := x_2(t) + x_1(t)$, $z(t) := x_2(t) - x_1(t)$.
The function $z(t)$ satisfies
\begin{equation}
z''(t) + 2F(z(t)) \in {\mathbf W}_{1 + \gamma}\quad\text{for all }\gamma \in [1, 2).
\end{equation}
Thus, using \eqref{eq:regime-1} and \eqref{eq:regime-2}, the assumptions of
Lemma~\ref{lem:traj-asym} are satisfied, hence there exists $t_0$ such that for all $\gamma \in [1, 2)$,
\eqref{eq:traj-asym} holds.

The function $m(t)$ satisfies
\begin{equation}
m''(t) \in {\mathbf W}_{1 + \gamma}\quad \text{for all }\gamma \in [1, 2).
\end{equation}
Integrating in time and using $f \in W_{0, \gamma + 1}$,
$m'(t) \to 0$ we get $|m'(t)| \lesssim t^{-\gamma-1}$, in particular $x_0 := \frac 12\lim_{t \to \infty}m(t)$ is well-defined,
and $|m(t) - 2x_0| \lesssim t^{-\gamma}$.

We obtain
\begin{equation}
\begin{aligned}
&(x_1, x_2) - \big(x_0 - \log(A(t-t_0)), x_0 + \log(A(t-t_0))\big) \\
&= \Big(\frac{m - z}{2}, \frac{m + z}{2}\Big) - \big(x_0 - \log(A(t-t_0)), x_0 + \log(A(t-t_0))\big) \in S_\gamma,
\end{aligned}
\end{equation}
for all $\gamma \in (1, 2)$.
We deduce that, after translating in time by $t_0$ and in space by $x_0$, the triple $(x_1, x_2, g)$
satisfies the requirements of Step 1, and the conclusion follows.
\end{proof}
\begin{remark}
Our existence proof is constructive, as we can in principle obtain better and better approximate solutions
by the usual iteration scheme.
It can be seen {from} the proof that we obtain functions approximating the fixed point at arbitrary polynomial order in time.
Indeed, our proof in fact yields
\begin{equation}
\label{eq:power-improvement}
\|\Theta(\sh x) - \Theta(x)\|_{S_\beta} \lesssim \|\sh x - x\|_{S_\gamma},\quad\text{for any }\beta < \gamma + 1.
\end{equation}
\end{remark}

\bibliography{researchbib}

\bibliographystyle{plain}

\end{document}